\documentclass[12pt]{article}
\usepackage{amsmath,amssymb,amsfonts,amsthm,graphicx,color}
\usepackage{mathrsfs}
\usepackage{hyperref}
\usepackage{mathtools}
\usepackage{upgreek}
\usepackage{graphicx,type1cm,eso-pic,color}
\usepackage[margin=0.9in]{geometry}
\allowdisplaybreaks
\title{Global existence and non-existence of weak solutions for non-local stochastic semilinear reaction-diffusion equations driven by a fractional noise}
\author{S. Sankar${}^{1}$, Manil T. Mohan${}^{2,}\thanks{Corresponding author email: maniltmohan@ma.iitr.ac.in, maniltmohan@gmail.com.}$  and S. Karthikeyan${}^{1}$\\
	\footnotesize{$^1$Department of Mathematics, Periyar University, Salem 636 011, India}\\
	\footnotesize{$^2$Department of Mathematics, Indian
	Institute of Technology Roorkee, Roorkee 247 667, India}}
\date{}
\allowdisplaybreaks

\usepackage{xcolor}
\allowdisplaybreaks

\usepackage[pagewise]{lineno}

\usepackage{graphicx,eurosym}
\usepackage{hyperref}
\usepackage{mathtools}

\colorlet{darkblue}{blue!50!black}

\hypersetup{
	colorlinks,%
	citecolor=blue,%
	filecolor=red,%
	linkcolor=darkblue,%
	urlcolor=blue,%
	pdfnewwindow=true,%
	pdfstartview={FitH}
}

\usepackage{graphicx,amscd,mathrsfs,wrapfig,mathrsfs,lipsum}
\usepackage{eufrak}
\usepackage{tikz}
\usepackage{multicol}
\usepackage{caption}
\usetikzlibrary{arrows}

\colorlet{darkblue}{red!100!black}

\let\originalleft\left
\let\originalright\right
\renewcommand{\left}{\mathopen{}\mathclose\bgroup\originalleft}
\renewcommand{\right}{\aftergroup\egroup\originalright}

\begin{document}
	\maketitle \setcounter{page}{1} \numberwithin{equation}{section}
	\newtheorem{theorem}{Theorem}[section]
	\newtheorem{assumption}{Assumption}
	\newtheorem{lemma}{Lemma}[section]
	\newtheorem{Pro}{Proposition}[section]
	\newtheorem{Ass}{Assumption}[section]
	\newtheorem{Def}{Definition}[section]
	\newtheorem{Rem}{Remark}[section]
	\newtheorem{corollary}{Corollary}[section]
	\newtheorem{proposition}{Proposition}[section] 
	\newtheorem{app}{Appendix:}
	\newtheorem{ack}{Acknowledgement:}

	\begin{abstract}
	In the present paper, we study the existence and blow-up behavior to the following stochastic non-local reaction-diffusion equation:  
	\begin{equation*}
	\left\{
	\begin{aligned}
	du(t,x)&=\left[(\Delta+\gamma) u(t,x)+\int_{D}u^{q}(t,y)dy -ku^{p}(t,x)+\delta u^{m}(t,x)\int_{D}u^{n}(t,y)dy \right]dt\\
	&\quad+\eta u(t,x)dB^{H}(t),\\
	u(t,x)&=0, \ \ t>0, \ \ x\in \partial D, \\
	u(0,x)&=f(x) \geq 0, \ \ x\in D,\\
	\end{aligned}
	\right.
	\end{equation*}
	where $D\subset \mathbb{R}^{d}\ (d \geq 1)$ is a bounded domain  with smooth boundary $\partial D$. Here, $k>0, \gamma, \delta, \eta \geq 0$ and $p,q,n>1,\ m\geq 0$ with $m+n \geq q\geq p$. The initial data $f $ is a non-negative bounded measurable function in class $C^{2}$ which is not identically zero. Here, $\left\{ B^{H}(t) \right\}_{t \geq 0} $ is a one-dimensional fractional Brownian motion with Hurst parameter $\frac{1}{2} \leq H<1$ defined on a filtered probability space $\left( \Omega, \mathcal{F}, (\mathcal{F}_{t})_{t \geq 0}, \mathbb{P} \right)$. First, we estimate a lower bound for the finite-time blow-up and by choosing a suitable initial data, we obtain the upper bound for the finite-time blow-up of the above equation. Next, we provide a sufficient condition for the global existence of a weak solution of the above equation. Further, we obtain the bounds for the probability of blow-up solution.\\

	\noindent {\it Keywords: Semilinear SPDEs; non-local reaction-diffusion equations; blow-up times; stopping times; lower and upper bounds.}\\
		
	\noindent {\it MSC: 35R60; 60H15; 74H35; 35B50}
	\end{abstract}
	\section{Introduction}
	 A number of physical phenomena, including chemical reactions, electrical heating, and fluid flow, give rise to nonlinear diffusion problems that are modeled as non-local reaction-diffusion equations \cite{pao}. In many applications in engineering and biology, blow-up behaviour is associated with some destructive behavior of the mathematical models. Thus from the mathematical point of view, it is very interesting to investigate the conditions under which such a finite-time blow-up occurs.  In the past few decades, many authors have discussed the global existence and blow-up of solutions for local and non-local reaction-diffusion equations, and there have been many monographs as well as review papers \cite{beber,hu,quit} etc.  Kaplan \cite{kaplan} and Fujita \cite{fuji1966, Fuji1968} and followed by them, many researchers have developed the concept of blow-up  to a more general class of non-linear partial differential equations (PDEs).  Over the past few years, a considerable effort has been devoted to study the blow-up properties of solutions to parabolic equations with local boundary conditions, say Dirichlet, Neumann or Robin boundary conditions, which can be used to describe heat propagation on the boundary, see the papers (cf.\cite{deny,levine} and references therein).
	 
	 Non-local mathematical processes appear in different physical forms (see \cite{beber192,day,day193}).  Bebernes et al. in \cite{beber19} obtained the blow-up results for a more general class of non-local reaction diffusion equations of the form in a bounded domain $D\subset  \mathbb{R}^d\ (d \geq 1)$ with suffciently smooth boundary $\partial D$
	 \begin{equation}\label{ba1}
	 \left\{
	 \begin{aligned}
	 u_{t}-\Delta u&=f(u)+g(t), \ \ t>0, \ x \in D, \\ 
	 u(x,t)&=0, \ \ t>0, \  x \in \partial{D},  \\
	 u(x,0)&=u_{0}(x), \ \ x \in D,
	 \end{aligned}
	 \right.
	 \end{equation}
	 when $g^\prime(t) \geq 0$ or $g(t)= \frac{k}{ \left|D \right|} \int_{\Omega}u_{t}(x,t)dx$, here $|D|$ is the volume of $D$ with $0 <k<1$. Chadam et al. \cite{chadam}  proved that under suitable conditions, the solution of \eqref{ba1} with $f(u)=0$ and $g=\int_{D}F(u)dx$ blows-up in a finite-time. M. Wang and Y. Wang \cite{wang1996} investigated the existence and non-existence of global positive solutions for a non-local reaction-diffusion equation of the type	 \begin{equation}\label{ba2}
	 \left\{
	 \begin{aligned}
	 u_{t}-d \Delta u &=\int_{D} u^{q}(t,x) dx -k u^{p}(t,x), \ \ x \in D, \ t>0,\\
	 u(x,0)&=u_{0}(x), \ \ x \in D,\\
	 \end{aligned}
	 \right. 
	 \end{equation}
	 subject to the boundary (Dirichlet or Neumann type) conditions 
	 \begin{align}\label{ba3}
	 u(t,x)=0, \  \mbox{ or  }\ \frac{\partial u}{\partial \upnu}=0, \ x \in \partial D, \ t>0,
	 \end{align}
where $D\subset  \mathbb{R}^d\ (d \geq 1)$ with suffciently smooth boundary $\partial D$. Here $\upnu$ is the unit outside normal to the boundary $\partial D$, the parameters $k>0, p,q \geq 1$ and $d>0$. Song in \cite{song} obtained the lower bounds for the finite-time blow-up of a non-local reaction-diffusion equation for \eqref{ba2}-\eqref{ba3} with $d=1.$  Li et al. \cite{cai}  proved that the solution exists globally or blow-up in finite-time to the reaction-diffusion system of non-local sources with appropriate hypotheses. 
%
	 
	 Since the above models are deterministic in nature, the dynamics is entirely determined by the initial data. This is obviously not the case with natural phenomena, where random environmental influences often play an important role. Many researchers have studied the blow-up results for non-linear stochastic partial differential equations (SPDEs) with the Dirichlet boundary conditions (cf. \cite{chow2009,doz2010,doz2013,doz2020}) etc.  Chow \cite{chow09,chow11} proved that the solution of the stochastic reaction-diffusion equation is positive and explodes in finite-time in the mean $L^p$-norm.  The stochastic counterpart of non-local problems have received less attention. Recently, Liang and Zhao \cite{liang}  addressed the existence and blow-up of positive solutions for a stochastic non-local reaction-diffusion equation of the form:
	 \begin{equation}
	 \left\{
	 \begin{aligned}
	 u_{t}-d_{1} \Delta u &=\int_{D} u^{q}(t,x) dx -k u^{p}(t,x)+\epsilon u(t,x)dB_{t}, \ \ x \in D, \ t>0,\\
	 u(t,x)&=0, \ \ t>0, \ x \in \partial D,\nonumber\\ 
	 u(0,x)&=u_{0}(x), \ \ x \in D, \nonumber\\
	 \end{aligned}
	 \right. 
	 \end{equation}
	 where 
	 $d_{1}, k, \epsilon>0$ and $p, q \geq 1$ are constants.
	 	 
	Inspired by the above facts, in the present paper, our aim is to prove the global existence of a weak solution for certain parameters  and obtain lower and upper bounds for the blow-up time for a more general class of stochastic non-local reaction-diffusion equations driven by fractional Brownian motion (fBm):  
	\begin{equation}\label{b1}
	\left\{
	\begin{aligned}
	du(t,x)&=\left[(\Delta+\gamma) u(t,x)+\int_{D}u^{q}(t,y)dy -ku^{p}(t,x)+\delta u^{m}(t,x)\int_{D}u^{n}(t,y)dy \right]dt\\
	&\quad+\eta u(t,x)dB^{H}(t),\\
	u(t,x)&=0, \ \ t>0, \ \ x\in \partial D, \\
	u(0,x)&=f(x) \geq 0, \ \ x\in D,\\
	\end{aligned}
	\right.
	\end{equation}
	where $D$ is a bounded domain in $\mathbb{R}^{d}\ (d\geq 1)$ with smooth boundary $\partial D$. Here $k>0, \gamma\geq 0, \delta \geq 0, \eta \geq 0$ and $p,q,n>1,\ m \geq 0$  with $m+n \geq q\geq p$. The initial data $f$ is  non-negative and bounded measurable  in the class $C^{2}$ which is not identically zero. Further, $\left\{ B^{H}(t) \right\}_{t \geq 0} $ is a standard one-dimensional fBm defined on a filtered probability space $\left( \Omega, \mathcal{F}, (\mathcal{F}_{t})_{t \geq 0}, \mathbb{P} \right)$ with Hurst parameter $\frac{1}{2} \leq H<1$. An fBm of Hurst parameter $H \in (0,1)$ is a centered Gaussian process $\left\{ B^{H}(t)\right\}_{t \geq 0}$, with covariance function 
	\begin{align*}
		R_{H}(t,s):=E[B^H(t)B^H(s)]= \frac{1}{2} \left( s^{2H}+t^{2H}-|t-s|^{2H} \right). 
	\end{align*}
	It is known that $B^{H}(\cdot)$  admits the so called \emph{Volterra representation} (Nualart \cite{nualart}),
	\begin{align}
		B^{H}(t):=\int_{0}^{t} K_{H}(t,s) dW(s),  \nonumber 
	\end{align}  
	where $\{W(t)\}_{t\geq 0}$ is a standard Brownian motion and the Volterra kernel $K_{H}(t,s)$ is defined by 
	\begin{align}
		K_{H}(t,s) = C_{H} \left[ \frac{t^{H-\frac{1}{2}}}{s^{H-\frac{1}{2}}}(t-s)^{H-\frac{1}{2}} -\left(H-\frac{1}{2}\right)\int_{s}^{t} \frac{u^{H-\frac{3}{2}}}{s^{H-\frac{1}{2}}}(u-s)^{H-\frac{1}{2}} du  \right], \ \ s \leq t, \nonumber 
	\end{align}
	where $C_{H}$ is a constant depending only on $H.$   When $H=\frac{1}{2}$, it is  the standard one-dimensional Brownian motion  $W(\cdot)$.  We closely follow the paper \cite{liang} to obtain our results. 
	
	
	The main goal of this article is to obtain  the existence and non-existence of a solution $u(\cdot, \cdot)$ of the equation \eqref{b1}. First, we estimate a lower bound for the finite-time blow-up of the solution of \eqref{b1} and by choosing a suitable initial data we obtain the upper bound for the finite-time blow-up of the equation \eqref{b1}. Further, we provide a sufficient condition for a weak solution $u$ of \eqref{b1} to exist globally and obtain the bounds for the probability of blow-up solution $u(\cdot,\cdot)$ of the equation \eqref{b1}. 
	
	The rest of the article is organized as follows: Section \ref{sep1} covers the preliminaries required for this work. Section \ref{sec2} is devoted to obtain an associated random PDE, by applying a random transformation (cf. \eqref{tr1}) to the equation \eqref{b1}. The result established in Theorem \ref{thm2.1} will be useful to discuss our further sections. The existence of random PDE is based on a transformation which converts a stochastic partial differential equation (SPDE) into an equivalent random PDE. Such type of transformation is available in the literature when the noise is either additive or linear multiplicative. The results obtained in Sections \ref{sec2}, \ref{sec3} and \ref{sec4} are outlined in the following table: 
	
\vspace{0.5 cm}
\begin{tabular}{|l|l|l|} \hline	
	Parameters&Theorem&Result\\
	\hline 
	$|D| \leq k$ \& &Theorem \ref{thm4.1} &The existence of a global solution $v(\cdot,\cdot)$ of the \\ $p=q=n, m=0$ & &equation \eqref{s1}.\\ \hline 
	& Theorem \ref{t2} &Lower bounds for the finite-time blow-up $\tau$  of  \\ &  & \eqref{b1} with any non-negative  initial value $f$.\\ 
	\cline{2-3}		$|D| > k$ \&  & Theorem \ref{mthm1} &The existence of a global solution $u(\cdot,\cdot)$ of the  \\ $m+n \geq q \geq p$& &equation\eqref{b1} under the condition \eqref{b2}. \\ \cline{2-3}	
	&Theorem \ref{thm5.1} &Upper bounds for $\tau$ with the initial data \\ 
	& & $f(x) \geq b\varphi(x),$ $x\in D$,  for any $b>1$ such that  \\ & & \eqref{B1} holds.\\
	\hline
\end{tabular}
\vspace{0.4 cm}
\noindent \\
	Further, we estimate the blow-up probability to the solution of \eqref{b1} for the case $m+n=q>p$, the initial value $f(x)\geq b \varphi(x),\ b>1$ and   $\frac{1}{2}<H<1$ by using  the Malliavin calculus and the method adopted in \cite{dung}. We consider the case $H=\frac{1}{2}$ in the last  section and estimate the upper and lower bounds for the blow-up time of the solution of the equation \eqref{b1}. Further, we establish the lower and upper bounds for the probability of the blow-up solution $u(\cdot,\cdot)$ of the equation \eqref{b1}.
          
   	\section{Preliminaries}\label{sep1}
	In this section, we first recall some known results and  we begin with some necessary function spaces to be used in the subsequent sections. Let $D$ be a bounded domain in $\mathbb{R}^{d}, d \geq 1$ with smooth boundary $\partial D,$  $\overline{D}$ be the  closure of $D$ and $|D|$ be the volume of $D$. We denote $L^{2}(D )=L^{2}(D ; \mathbb{R}),$ the usual $L^{2}$-real Hilbert space equipped with the  norm $\|u\|=\left(\int_D|u(x)|^2dx\right)^{1/2}$ and inner product $(u,v)=\int_{D}u(x)v(x)dx$ for all $u,v\in L^{2}(D )$.
	
Let the semigroup $\left\{ T_{t} \right\}_{t\geq 0}$  of bounded linear operators be defined by 
	\begin{eqnarray}
	T_{t}f(x)=\mathbb{E}\left[ f(X_{t}), \ t<\tau_{D}|X_{0}=x \right], \quad  x\in D,\nonumber
	\end{eqnarray}
	for all bounded and measurable functions $f : D\rightarrow \mathbb{R},$ where $\left\{ X_{t} \right\}_{t\geq 0}$ is the $d$-dimensional Brownian motion with variance parameter $2$, killed at the time $\tau_{D}$ at which it strike the boundary $\partial D.$ Moreover,   $\lambda_{1}>0$ is the first eigenvalue of the Dirichlet Laplacian operator $-\Delta$ on $D$ which satisfies 
	\begin{eqnarray} \label{a3}
	-\Delta \varphi(x)=\lambda_{1} \varphi(x), \ \ x\in D,
	\end{eqnarray}
where 	$\varphi$ is the corresponding eigenfunction which  is strictly positive on $D$  (see Corollary 3.3.7 in \cite{davies}) and $\varphi|_{\partial D} =0.$ Remember that $$T_{t}\varphi(x)=\exp\{{-\lambda_{1} t}\}\varphi(x), \ \ t\geq 0,\ x\in D.$$ We assume that $\varphi$ is normalized so that $\displaystyle\int_{D} \varphi(x)dx=1$. Let $\left\lbrace p_{t}(x,y)\right\rbrace_{t>0}$ be the transition kernel of $\left\lbrace T_{t}\right\rbrace_{t \geq 0}$ and one can refer to \cite{wang1992} for its properties. For any $\varphi\in C^\infty_0(D)$, we denote
	$$u(t,\varphi):=\int_{D}u(t,x) \varphi(x)dx,\ \ t\geq 0,\ x\in D.$$

	Let us recall the notion of weak solution of \eqref{b1}. Let $\tau < +\infty$ be a stopping time. A continuous  $\{\mathcal{F}_{t} \}_{t \geq 0}$-adapted random field $u=\left\lbrace u(t,x),\ t \geq 0,\ x \in D \right\rbrace $ is a \emph{weak solution} of \eqref{b1} on the interval $(0,\tau)$ provided 
	\begin{align}\label{c1} 
    u\left(t,\varphi\right)&=u\left(0,\varphi\right)+\int_{0}^{t}u\left(s,(\Delta+\gamma) \varphi\right)ds+\int_{0}^{t}\int_{D}u^{q}(s,y)dy \int_{D}\varphi(z)dz ds-k\int_{0}^{t}u^{p}(s,\varphi)ds\nonumber\\
    &\quad+\delta \int_{0}^{t}u^{m}\left(s,x\right)\int_{D}u^{n}(s,y) dy \int_{D} \varphi(z)dz ds +\eta \int_{0}^{t}u(s,\varphi) dB^{H}(s),\ \mathbb{P}\text{-a.s.}, 
	\end{align}
	holds for every $\varphi \in C^\infty_0(D).$ 
	For any bounded measurable initial data $f \geq 0,$ we say that $u=\left\lbrace u(t,x),\ t \geq 0,\ x \in D \right\rbrace $  is a local mild solution of the equation \eqref{b1}, if there exists a number $0<\tau=\tau(\omega)\leq \infty$ such that $u$ satisfies the integral equation (\cite[Chapter 4]{pazy}),
	\begin{align}
	u(t,x)&=\exp\left\lbrace \gamma t\right\rbrace T_{t}f(x)+\int_{0}^{t}  e^{\gamma (t-r)} T_{t-r}\Bigg[\int_{D}u^{q}(r,y)dy-ku^{p}(r,x)\nonumber\\
	&\qquad +\delta u^{m}(r,x)\int_{D} u^{n}(r,y)dy \Bigg] dr+\eta \int_{0}^{t}  e^{\gamma (t-r)} T_{t-r} u(r,x)  dB^{H}(r),\ \mathbb{P}\text{-a.s.}, \nonumber 
	\end{align}
	for each $0 \leq t<\tau$ and $x\in D$. The equivalence between weak and mild solutions of \eqref{b1} can be established in a similar way as in \cite[Proposition 3.7]{IGCR}. 
	
	The following result will be used in the sequel. 
    \begin{lemma}\label{l2} (\cite[Lemma 1]{doz2020}) Let $0<H<1$ and let $\left\{ B^{H}(t)\right\}_{t \geq 0} $ be a one-dimensional fractional Brownian motion with Hurst parameter $H$ defined on a probability space $\left( \Omega, \mathscr{F}, \mathbb{P} \right).$ If $\nu>0,$ then $$\mathbb{P}\left( \int_{0}^{\infty} e^{ B^{H}(s)-\nu s}   ds < \infty\right)=1.$$ If $\nu<0,$ then $$\mathbb{P}\left( \int_{0}^{\infty} e^{ B^{H}(s)-\nu s} ds = \infty\right)=1.$$  
    \end{lemma}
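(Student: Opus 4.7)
The plan is to reduce both statements to the a.s.\ asymptotic behavior of fractional Brownian motion, specifically the fact that for $H\in(0,1)$
\begin{align*}
\lim_{s\to\infty}\frac{B^{H}(s)}{s}=0\quad\text{a.s.}
\end{align*}
This is a strong-law type bound that follows either from the law of the iterated logarithm for fBm (which gives $|B^{H}(s)|=O(s^{H}\sqrt{\log\log s})$, and $s^{H}\sqrt{\log\log s}/s\to 0$ since $H<1$), or by a direct Borel--Cantelli argument using $\mathbb{E}|B^{H}(s)|^{2}=s^{2H}$ together with continuity of sample paths on compact intervals. Call the resulting full-measure event $\Omega_{0}$.

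Fix $\omega\in\Omega_{0}$ and consider first the case $\nu>0$. From the displayed limit there exists $S=S(\omega)<\infty$ such that $|B^{H}(s,\omega)|\le \tfrac{\nu}{2}s$ for all $s\ge S$. Then $B^{H}(s)-\nu s\le -\tfrac{\nu}{2}s$ on $[S,\infty)$, so
\begin{align*}
\int_{S}^{\infty}e^{B^{H}(s)-\nu s}\,ds\le\int_{S}^{\infty}e^{-\nu s/2}\,ds=\frac{2}{\nu}e^{-\nu S/2}<\infty.
\end{align*}
The tail on $[0,S]$ is finite by path continuity, and adding these two pieces proves $\int_{0}^{\infty}e^{B^{H}(s)-\nu s}\,ds<\infty$ on $\Omega_{0}$.

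The case $\nu<0$ is symmetric. Pick $S=S(\omega)$ so that $B^{H}(s,\omega)\ge -\tfrac{|\nu|}{2}s=\tfrac{\nu}{2}s$ for $s\ge S$. Then on $[S,\infty)$ we have $B^{H}(s)-\nu s\ge \tfrac{|\nu|}{2}s$, and consequently
\begin{align*}
\int_{S}^{\infty}e^{B^{H}(s)-\nu s}\,ds\ge\int_{S}^{\infty}e^{|\nu|s/2}\,ds=\infty,
\end{align*}
so a fortiori $\int_{0}^{\infty}e^{B^{H}(s)-\nu s}\,ds=\infty$ on $\Omega_{0}$.

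The only genuine obstacle is the strong-law estimate, which is purely a statement about fBm paths and is standard (and in any case quotable from the Nualart monograph cited earlier or from the reference \cite{doz2020} given with the lemma). Everything else is a one-line comparison with an exponential integral.
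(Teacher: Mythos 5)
The paper offers no proof of this lemma at all --- it is imported verbatim from \cite[Lemma 1]{doz2020} --- so the only meaningful comparison is with the standard argument, which is exactly what you give: the law of the iterated logarithm for fBm yields $|B^{H}(s)|=O(s^{H}\sqrt{\log\log s})$ a.s., hence $B^{H}(s)/s\to 0$ a.s.\ because $H<1$, and the integral is then dominated (for $\nu>0$) or minorized (for $\nu<0$) by an explicit exponential integral. Your proof is correct; the only caveat is that your alternative Borel--Cantelli route would additionally require a Gaussian maximal inequality to control $\sup_{n\le s\le n+1}|B^{H}(s)-B^{H}(n)|$, since ``continuity on compacts'' alone does not give a summable bound, but the LIL route you lead with avoids this entirely.
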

    
	\section{Random PDE}\label{sec2}
	In this section, we obtain a random PDE by making use of the random transformation 
	\begin{align}\label{tr1} 
	v(t,x) = \exp\{-\eta B^{H}(t)\}u(t,x), 
	\end{align} 
	for $\frac{1}{2} < H<1$ and $t \geq 0,\ x \in D$. The equation $\eqref{b1}$ is transformed into the following  random PDE:
	\begin{equation}\label{s1}
	\left\{
	\begin{aligned}
	\frac{\partial v(t,x)}{\partial t}&=(\Delta+\gamma)v(t,x)+ e^{\left( q-1\right)\eta B^{H}(t) }\int_{D}v^{q}(t,y)dy-ke^{\left( p-1\right) \eta B^{H}(t) }v^{p}(t,x)\\
	&\qquad +\delta e^{(m+n-1) \eta B^{H}(t)}v^{m}(t,x)\int_{D} v^{n}(t,y)dy,\\
	v(t,x)&=0, \ \ x \in \partial D,\ t>0, \\
	v(0,x)&=f(x), \ \  x \in D.  
	\end{aligned}
	\right.
	\end{equation}
    The following result provides an equivalence between the weak solutions of the stochastic PDE $\eqref{b1}$ and the random PDE \eqref{s1}. 
	\begin{theorem} \label{thm2.1}
	If $u(\cdot,\cdot)$ is a weak solution of $\eqref{b1}$. Then the function $v(\cdot,\cdot)$ defined by
	\begin{eqnarray}
	v(t,x) = \exp\{-\eta B^{H}(t)\}u(t,x), \ t \geq 0,\ x \in D, \nonumber 
	\end{eqnarray} 
	is a weak solution of the random PDE $\eqref{s1}$  and viceversa. 
	\begin{proof}
	By using Ito's formula (\cite[Lemma 2.7.1]{mis2008}), we have 
	\begin{eqnarray}
	e^{-\eta B^{H}(t)}=1- \eta \int_{0}^{t} e^{- \eta  B^{H}(s)}dB^{H}(s). \nonumber 
	\end{eqnarray}
	For any $\varphi\in C^\infty_0(D),$ the weak solution of $\eqref{b1}$ is given by
	\begin{align} 
	u\left(t,\varphi\right)&=u\left(0,\varphi\right)+\int_{0}^{t}u\left(s,(\Delta+\gamma) \varphi\right)ds+\int_{0}^{t}\int_{D}u^{q}(s,y)dy \int_{D}\varphi(z)dz ds-k\int_{0}^{t}u^{p}(s,\varphi)ds\nonumber\\
	&\quad+\delta \int_{0}^{t}u^{m}\left(s,x\right)\int_{D}u^{n}(s,y) dy \int_{D} \varphi(z)dz ds +\eta \int_{0}^{t}u(s,\varphi) dB^{H}(s).
	\end{align}
    By applying the integration by parts formula (\cite[Chapter 8]{klebaner}), we obtain
	\begin{align*}
	v(t,\varphi):&=\int_{D}v(t,x)\varphi(x)dx \nonumber\\
    &=v(0,\varphi)+\int_{0}^{t} e^{- \eta B^{H}(s)}du(s,\varphi)+\int_{0}^{t} u(s,\varphi)\left(-\eta e^{-\eta B^{H}(s)}dB^{H}(s)\right).\nonumber
	\end{align*} 
	Therefore,
	\begin{align}\label{a6}
	v(t,\varphi) &= v(0,\varphi)+\int_{0}^{t}v\left(s,(\Delta+\gamma) \varphi\right)ds+\int_{0}^{t}e^{(q-1) \eta B^{H}(s)}\int_{D}v^{q}(s,y)dy\int_{D}\varphi(z)dzds \nonumber\\
	&-k\int_{0}^{t}e^{(p-1) \eta B^{H}(s)}v^{p}(s, \varphi)ds +\delta\int_{0}^{t}e^{(m+n-1) \eta B^{H}(s)}v^{m}(s,x)\int_{D}v^{n}(s,y)dy\int_{D}\varphi(z)dzds.
	\end{align}
	The preceding equality means that $v(\cdot,\cdot)$ is a weak solution of $\eqref{s1}.$ The converse part follows from the fact that the change of variable is given by a homeomorphism  which transforms one random dynamical system into an another equivalent one.   
	\end{proof}
	\end{theorem}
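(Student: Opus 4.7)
The plan is to differentiate the random transformation $v(t,\varphi)=e^{-\eta B^{H}(t)}u(t,\varphi)$ in the sense of a pathwise product rule, substitute the weak formulation \eqref{c1} of $u$, and verify that every term lines up with the weak formulation of the random PDE \eqref{s1}. Since the statement is restricted to $\tfrac12<H<1$, fractional Brownian motion has zero quadratic variation, so Itô's formula quoted from Mishura yields $de^{-\eta B^{H}(t)}=-\eta e^{-\eta B^{H}(t)}dB^{H}(t)$ with no second-order correction; this is the only stochastic-calculus input I need, and it is stated in the excerpt.

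First, for each fixed $\varphi\in C_{0}^{\infty}(D)$, I would apply integration by parts (Klebaner, Ch.~8, as the paper already cites) to the product of the scalar process $e^{-\eta B^{H}(t)}$ with the real-valued process $t\mapsto u(t,\varphi)$. The cross-variation term vanishes for $H>\tfrac12$ because the quadratic variation of $B^{H}$ is zero, so I obtain
\[
v(t,\varphi)=v(0,\varphi)+\int_{0}^{t}e^{-\eta B^{H}(s)}\,du(s,\varphi)-\eta\int_{0}^{t}u(s,\varphi)e^{-\eta B^{H}(s)}\,dB^{H}(s).
\]
Substituting the weak formulation \eqref{c1} into $du(s,\varphi)$, the stochastic integral $\eta\int_{0}^{t}e^{-\eta B^{H}(s)}u(s,\varphi)\,dB^{H}(s)$ appearing on the right cancels exactly against the last term, leaving only drift integrals each multiplied by $e^{-\eta B^{H}(s)}$.

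Next, to produce the exponential prefactors in \eqref{s1}, I would use the inverse transformation $u(s,\cdot)=e^{\eta B^{H}(s)}v(s,\cdot)$ inside each nonlinearity. The $q$-th power source yields a factor $e^{(q-1)\eta B^{H}(s)}$, the $p$-th power absorption yields $e^{(p-1)\eta B^{H}(s)}$, and the coupled term $u^{m}\int u^{n}$ yields $e^{(m+n-1)\eta B^{H}(s)}$, while the linear $(\Delta+\gamma)$ term has no extra factor. This matches \eqref{a6} term-by-term, showing $v(\cdot,\cdot)$ solves \eqref{s1} in the weak sense.

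The converse direction is immediate because $u\mapsto e^{-\eta B^{H}(t)}u$ is, almost surely and pathwise, a continuous bijection with continuous inverse, so the same product-rule computation performed in reverse sends a weak solution of \eqref{s1} to one of \eqref{b1}. The only real technical subtlety, and the step I would spend the most care on, is justifying that the product rule applies to $e^{-\eta B^{H}(t)}u(t,\varphi)$ with a vanishing joint-variation correction; this is exactly the content of the integration-by-parts formula for processes of bounded $p$-variation with $p<2$ (which $B^{H}$ satisfies for $H>\tfrac12$), and since the $dB^{H}$-coefficient of $u(t,\varphi)$ is simply $\eta u(t,\varphi)$, the regularity required to invoke that formula is straightforward to check from the assumed continuity of the adapted random field.
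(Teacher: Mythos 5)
Your proposal follows essentially the same route as the paper: apply the pathwise product rule (integration by parts) to $e^{-\eta B^{H}(t)}u(t,\varphi)$, substitute the weak formulation of $u$, observe the cancellation of the $dB^{H}$ terms, and convert powers of $u$ into powers of $v$ with the exponential prefactors; the converse is handled by the same homeomorphism observation. Your added remark justifying the vanishing cross-variation via the zero quadratic variation of $B^{H}$ for $H>\tfrac12$ is a welcome explicit point that the paper leaves implicit in its citation of the integration-by-parts formula.
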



	This equation \eqref{b1} is understood in the pathwise (or trajectory wise) sense and the existence and uniqueness of local solution is clear by a standard theory of parabolic type.  It is also clear that the solution can be extended in $t$ direction, as long as the $L^{\infty}$ norm of the solution remains finite. Moreover,  the classical results for parabolic PDE can be applied to show existence and uniqueness of solution $v \in \mathrm{C}\left([0,\tau) ; L^{p}(D ; \mathbb{R})\right)$ up to an eventual blowup (see  \cite[Chapter 7]{friedman1964}  and \cite{liang}).

	  Let us denote 
	\begin{align}
	B^{H}_{\ast}(t) = \sup_{0 \leq s \leq t} |B^{H}_{s}|,\ \mbox{for each}\ t\geq 0. \nonumber 
	\end{align}
	\begin{lemma}(see \cite[Lemma 2.1]{dung2})
	 Let $Z$ be a centered random variable in $\mathbb{D}^{1,2}$ (cf. Sec. \ref{prb1} for more details). Assume that there exists a non-random constant $M_{0}$ such that $$ \int_{0}^{T} (D_{r} Z)^{2} dr \leq M_{0}^{2},\ \mbox{a.s.}$$
	Then the following estimate for tail probabilities holds:
	$$\mathbb{P}( |Z| \geq x ) \leq 2 e^{-\frac{x^{2}}{2M_{0}^{2}}},\ x>0.$$
   \end{lemma}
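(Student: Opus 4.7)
The plan is to upgrade the almost-sure Malliavin-derivative bound into a sub-Gaussian estimate on the moment generating function of $Z$, and then to conclude by a Chernoff--Markov optimisation. This reduces the lemma to a Gaussian tail bound of the type one would have if $Z$ were itself $\mathcal{N}(0,M_0^2)$.

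First, since $Z \in \mathbb{D}^{1,2}$ is centered, I would invoke the Malliavin integration-by-parts (duality) formula together with the pseudo-inverse $L^{-1}$ of the Ornstein--Uhlenbeck generator. For any smooth $\phi:\mathbb{R}\to\mathbb{R}$ of at most exponential growth,
\begin{equation*}
\mathbb{E}[Z\,\phi(Z)] = \mathbb{E}\!\left[\phi'(Z)\int_{0}^{T} D_{r}Z \cdot (-D_{r}L^{-1}Z)\,dr\right].
\end{equation*}
Specialising to $\phi(z)=e^{\lambda z}$ yields $\mathbb{E}[Z e^{\lambda Z}] = \lambda\,\mathbb{E}[e^{\lambda Z} g_{Z}]$ with $g_{Z} := \int_{0}^{T} D_{r}Z \cdot (-D_{r}L^{-1}Z)\,dr$.

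Second, I would prove the pathwise bound $g_{Z} \leq M_{0}^{2}$ almost surely. Writing $-D_{r}L^{-1}Z = \int_{0}^{\infty} e^{-t}\,P_{t}(D_{r}Z)\,dt$, where $P_{t}$ is the Ornstein--Uhlenbeck semigroup represented by Mehler's formula as an average against an independent Gaussian copy, the invariance of the Gaussian measure transfers the hypothesis $\int_{0}^{T}(D_{r}Z)^{2}\,dr \leq M_{0}^{2}$ to the rotated coordinates. A conditional Cauchy--Schwarz inequality (inside the Mehler expectation) followed by the Cauchy--Schwarz inequality in the $r$-variable then gives
\begin{equation*}
g_{Z} \leq \int_{0}^{\infty} e^{-t}\!\left(\int_{0}^{T}(D_{r}Z)^{2}\,dr\right)^{\!1/2}\!\left(\int_{0}^{T}\bigl(P_{t}(D_{r}Z)\bigr)^{2}\,dr\right)^{\!1/2}\!dt \leq M_{0}^{2}.
\end{equation*}

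Third, combining the two preceding observations, the function $\varphi(\lambda):=\log \mathbb{E}[e^{\lambda Z}]$ satisfies $\varphi(0)=\varphi'(0)=0$ and
\begin{equation*}
\varphi'(\lambda) = \frac{\mathbb{E}[Z e^{\lambda Z}]}{\mathbb{E}[e^{\lambda Z}]} = \lambda\,\frac{\mathbb{E}[e^{\lambda Z}g_{Z}]}{\mathbb{E}[e^{\lambda Z}]} \leq \lambda\,M_{0}^{2}.
\end{equation*}
Integrating from $0$ to $\lambda\geq 0$ gives $\mathbb{E}[e^{\lambda Z}] \leq \exp\!\bigl(\lambda^{2}M_{0}^{2}/2\bigr)$; the bound for $\lambda<0$ follows by replacing $Z$ with $-Z$, which lies in $\mathbb{D}^{1,2}$ with the same Malliavin norm. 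Markov's inequality then yields, for every $x>0$ and $\lambda>0$, $\mathbb{P}(Z\geq x) \leq \exp(-\lambda x + \lambda^{2}M_{0}^{2}/2)$, and optimising at $\lambda^{\ast}=x/M_{0}^{2}$ gives $\mathbb{P}(Z\geq x) \leq e^{-x^{2}/(2M_{0}^{2})}$. Applying the same bound to $-Z$ and summing produces the desired inequality with the factor $2$.

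The main obstacle is the second step, namely the pointwise Mehler estimate $g_{Z}\leq M_{0}^{2}$: this is precisely where the \emph{almost sure} (rather than merely $L^{2}$) boundedness of the Malliavin derivative is used, and it is the only point where genuine Malliavin-calculus machinery is unavoidable; the remaining steps are a Herbst-type entropy argument that is elementary once the sub-Gaussian moment generating function bound is in place.
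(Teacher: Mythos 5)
The paper itself offers no proof of this lemma: it is imported verbatim from the cited reference \cite[Lemma 2.1]{dung2}, so there is no internal argument to compare yours against. Your proposal is the standard proof of this concentration inequality on Wiener space (the exponential inequality of \"{U}st\"{u}nel, in the covariance-representation form popularised by Nourdin--Viens, which is also how the cited source obtains it), and it is correct in substance: the identity $\mathbb{E}[Z\phi(Z)]=\mathbb{E}\bigl[\phi'(Z)\int_0^T D_rZ\,(-D_rL^{-1}Z)\,dr\bigr]$ for centered $Z\in\mathbb{D}^{1,2}$, the Mehler-formula estimate $\int_0^T D_rZ\,(-D_rL^{-1}Z)\,dr\le M_0^2$ a.s.\ (Cauchy--Schwarz inside the Mehler expectation, then in $r$, then integrating $e^{-t}$ over $t\in(0,\infty)$), the resulting differential inequality for the log-moment generating function, and the Chernoff optimisation at $\lambda^{\ast}=x/M_0^2$ all go through and produce exactly the stated bound, with the factor $2$ coming from treating $Z$ and $-Z$ separately.

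One technical point you should make explicit rather than assume: the covariance identity is proved for test functions with bounded derivative, so plugging in $\phi(z)=e^{\lambda z}$ directly presupposes $\mathbb{E}[e^{\lambda Z}]<\infty$, which is essentially what is being established. The standard repair is either to run the argument with truncations $\phi_n(z)=e^{\lambda(z\wedge n)}$ and pass to the limit by monotone convergence, or to first show by induction (applying the identity with $\phi(z)=z^{2k-1}$) that $\mathbb{E}[Z^{2k}]\le (2k-1)!!\,M_0^{2k}$ for all $k$, which makes the moment generating function finite everywhere and justifies the exponential test function by dominated convergence. With that caveat filled in, your proof is complete and is, for practical purposes, the proof of the cited lemma.
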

  \noindent  Therefore (see \eqref{520} below) $$ \mathbb{P}(B_{\ast}^{H}(t)< \infty)=1-\lim_{x \rightarrow \infty} \mathbb{P}(B_{\ast}^{H}(t)> x)=1,\ t>0.$$
   Let us denote $N_{t}=\{\omega \in \Omega : B^{H}_{\ast}(t)= \infty\},$ for every $t>0.$ Then, we have $\mathbb{P}(N_{t})=0,$ for every $t>0.$ If we take $t=1,2,\ldots,$ it is clear that $N_{t} \subset N_{m},$ for $t \geq m.$ Define $N=\displaystyle\lim_{m \rightarrow \infty} N_{m},$ we have $\mathbb{P}(N)=\displaystyle\lim_{m\to \infty}\mathbb{P}(N_{m})=0.$ Therefore, for all $\omega \in N, B^{H}_{\ast}(t;\omega)<\infty,$ for all $t>0.$ From the above observation, without loss of generality, we can assume that for all $\omega \in \Omega,$
   \begin{align} \label{fb1}
   B^{H}_{\ast}(t;\omega)<\infty,\ \mbox{for all}\ t>0.
   \end{align} 
   By Theorem \ref{thm2.1} and \eqref{fb1},  the global existence and finite-time blow-up of $u(\cdot,\cdot)$ is guarenteed by considering the properties of $v(\cdot,\cdot)$. So in this paper, we mainly consider the random partial differential equation \eqref{s1} for further investigation.

	From classical results in semigroup theory, for any bounded measurable initial data $f \geq 0,$ there exists a unique local mild solution $v(\cdot,\cdot)$ of the random PDE \eqref{s1}, if there exists a number $0<\tau=\tau(\omega)\leq \infty$ such that $v(t,x)$ satisfies the integral equation \cite[Chapter 4]{pazy}
	\begin{align}\label{e3}
	v(t,x)&=\exp\left\lbrace \gamma t\right\rbrace T_{t}f(x)+\int_{0}^{t}  e^{\gamma (t-r)} T_{t-r}\Bigg[e^{\left( q-1\right) \eta B^{H}(r) }\int_{D}v^{q}(r,y)dy\nonumber\\
	&\qquad-ke^{\left( p-1\right) \eta B^{H}(r) }v^{p}(r,x) +\delta e^{(m+n-1) \eta B^{H}(r)}v^{m}(r,x)\int_{D} v^{n}(r,y)dy \Bigg] dr,
	\end{align}
    for each $0 \leq t<\tau$ and $x\in D.$ 
    
    Let $\tau$ be the blow-up time of the equation \eqref{s1} with the initial value $f$. Due to Theorem \ref{thm2.1} and a.s continuity of $B^H(\cdot)$, $\tau$ is also the blow-up time for the equation \eqref{b1} as well (\cite[Corollary 1]{dozfrac2010}).  
    
    The following comparison result  is a consequence of  \cite[Lemma 2.1]{li2003} and  \cite[Lemma 2.1]{wang1996} (for more details see  \cite[Lemma 1]{pao}).
    \begin{lemma}\label{l1}
    	Suppose that $w(t,x) \in C^{1,2}(D_{T}) \cap C(\overline{D}_{T}),$ where $D_{T}=(0,T] \times D$ and satisfies 
    	\begin{equation}\label{a1}
    	 \left\{
    	 \begin{aligned}
    	w_{t}(t,x)-d\Delta w(t,x) &\geq c_{1}(t,x)w(t,x)+c_{3}(t,x)\int_{D}c_{2}(t,x)w(t,x) dx \\
    	&\quad+\int_{D} c_{4}(t,x)w(t,x)dx,\ x \in D,\ 0<t \leq T,\\
    	w(t,x) &\geq 0,\ x \in \partial D,\ 0\leq t\leq T,\\
    	w(0,x) &\geq 0,\ x \in D, 
    	 \end{aligned}
    	 \right.
    	\end{equation}
    	where $c_{1}(t,x), c_{2}(t,x), c_{3}(t,x)$ and $c_{4}(t,x)$ are bounded functions and $c_{2}(t,x) \geq 0,\ c_{3}(t,x) \geq 0,\ c_{4}(t,x) \geq 0$ and $d > 0$ in $D_{T}$. Then $w(t,x) \geq 0$ on $\overline{D}_{T}.$ Moreover, $w(t,x)>0$ in $\overline{D}_{T}$ if $w(0,x)$ is not identiaclly zero. 
    \end{lemma}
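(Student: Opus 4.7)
The plan is to reduce the nonlocal comparison inequality to a standard parabolic maximum principle by shifting $w$ upward with a strictly positive, exponentially growing perturbation, and then arguing at the first touching time. Because $c_{1},c_{2},c_{3},c_{4}$ are bounded on $\overline{D}_{T}$ and $|D|$ is finite, I fix a constant $\lambda>0$ satisfying
\begin{align*}
\lambda \;>\; \sup_{(t,x)\in\overline{D}_{T}}\Bigl(c_{1}(t,x)+c_{3}(t,x)\int_{D}c_{2}(t,y)\,dy+\int_{D}c_{4}(t,y)\,dy\Bigr).
\end{align*}

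For $\varepsilon>0$, set $w_{\varepsilon}(t,x):=w(t,x)+\varepsilon e^{\lambda t}$. A direct substitution using the inequality satisfied by $w$ shows that $w_{\varepsilon}$ satisfies
\begin{align*}
\partial_{t}w_{\varepsilon}-d\Delta w_{\varepsilon} \;\geq\; c_{1}w_{\varepsilon}+c_{3}\!\int_{D}c_{2}\,w_{\varepsilon}\,dy+\int_{D}c_{4}\,w_{\varepsilon}\,dy+\varepsilon e^{\lambda t}\Bigl[\lambda-c_{1}-c_{3}\!\int_{D}c_{2}\,dy-\int_{D}c_{4}\,dy\Bigr]
\end{align*}
on $D_{T}$, where the bracket is strictly positive by the choice of $\lambda$; moreover $w_{\varepsilon}(0,\cdot)\geq\varepsilon>0$ on $\overline{D}$ and $w_{\varepsilon}\geq\varepsilon e^{\lambda t}>0$ on $\partial D$. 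Suppose, for contradiction, that $t^{*}:=\inf\{t\in(0,T]:\min_{x\in\overline{D}}w_{\varepsilon}(t,x)\leq 0\}\leq T$. Continuity gives $w_{\varepsilon}>0$ on $[0,t^{*})\times\overline{D}$ and the existence of $x^{*}\in\overline{D}$ with $w_{\varepsilon}(t^{*},x^{*})=0$; strict positivity on the lateral boundary forces $x^{*}\in D$. At the interior spatial minimum $(t^{*},x^{*})$, one has $\Delta w_{\varepsilon}(t^{*},x^{*})\geq 0$, and $t^{*}$ being the first touching time gives $\partial_{t}w_{\varepsilon}(t^{*},x^{*})\leq 0$, so the left-hand side above is $\leq 0$. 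On the other hand, $w_{\varepsilon}(t^{*},\cdot)\geq 0$ on $\overline{D}$ together with $c_{2},c_{3},c_{4}\geq 0$ makes every nonlocal term on the right-hand side nonnegative, while the perturbation term contributes a strictly positive quantity, yielding a contradiction. Hence $w_{\varepsilon}>0$ on $\overline{D}_{T}$ for every $\varepsilon>0$, and letting $\varepsilon\downarrow 0$ gives $w\geq 0$ on $\overline{D}_{T}$.

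For the strict positivity, once $w\geq 0$ is known the two nonlocal integrals $c_{3}\int_{D}c_{2}w\,dy$ and $\int_{D}c_{4}w\,dy$ are nonnegative, so $w$ automatically satisfies the purely local linear inequality $w_{t}-d\Delta w-c_{1}w\geq 0$ in $D_{T}$, with nonnegative parabolic boundary data and $w(0,\cdot)\not\equiv 0$. The classical strong parabolic maximum principle for linear operators with bounded zeroth-order coefficient (Friedman \cite{friedman1964}, or Pao \cite{pao}) then yields $w>0$ in $D_{T}$, which is the claimed conclusion.

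The only delicate point, and the reason a bare constant shift does not work, is that the nonlocal integrand $\int_{D}c_{2}w_{\varepsilon}\,dy$ scales with $|D|$: a constant perturbation $\varepsilon$ would produce a loss of order $\varepsilon|D|$ on the right, which can swallow any gain. This is why the shift must grow exponentially in $t$ and $\lambda$ must dominate not just $c_{1}$ but the full quantity $c_{1}+c_{3}\int c_{2}+\int c_{4}$, after which the contradiction at $(t^{*},x^{*})$ is automatic.
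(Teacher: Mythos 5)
Your proof is correct, and it reaches the same conclusion by a recognizably different execution of the maximum-principle idea. The paper transforms multiplicatively, setting $v=e^{-\gamma_{1}t}w$ with $\gamma_{1}>\widetilde{c}_{1}+(\widetilde{c}_{2}\widetilde{c}_{3}+\widetilde{c}_{4})|D|$, assumes $v$ attains a \emph{negative global minimum} at $(t_{0},x_{0})$, and derives the contradiction $\gamma_{1}\leq\widetilde{c}_{1}+(\widetilde{c}_{2}\widetilde{c}_{3}+\widetilde{c}_{4})|D|$; the delicate step there is bounding the nonlocal terms from below by $(\widetilde{c}_{2}\widetilde{c}_{3}+\widetilde{c}_{4})|D|\,v(t_{0},x_{0})$, which works only because $v(t_{0},x_{0})$ is negative and is a global spatial minimum. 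You instead perturb additively, $w_{\varepsilon}=w+\varepsilon e^{\lambda t}$, and argue at the \emph{first touching time} $t^{*}$; this buys you that $w_{\varepsilon}(t^{*},\cdot)\geq 0$ on all of $\overline{D}$, so the nonlocal integrals are trivially nonnegative and the strictly positive source $\varepsilon e^{\lambda t^{*}}\bigl[\lambda-c_{1}-c_{3}\int_{D}c_{2}-\int_{D}c_{4}\bigr]$ forces the contradiction without any delicate sign bookkeeping — a cleaner treatment of the nonlocal terms, at the cost of the extra limit $\varepsilon\downarrow 0$. Your closing remark about why the shift must be $\varepsilon e^{\lambda t}$ with $\lambda$ dominating the full nonlocal mass is exactly the same quantitative threshold the paper imposes on $\gamma_{1}$, so the two choices are two faces of the same estimate. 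One further point in your favor: you actually prove the ``moreover'' clause by reducing to the purely local inequality $w_{t}-d\Delta w-c_{1}w\geq 0$ and invoking the strong parabolic maximum principle, whereas the paper's proof stops at $w\geq 0$ and never addresses strict positivity; note also that strict positivity can only hold in $D_{T}$ (not on $\partial D$, where $w$ may vanish), and your statement is the correct one.
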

\begin{proof}
	Let $\widetilde{c}_{i}$ be the least upper bound of $c_{i}(t,x)$ in $D_{T}, i=1,2,3,4.$ Let $v(t,x)=e^{-\gamma_{1} t}w(t,x)$ for some $\gamma_{1} > \widetilde{c}_{1}+(\widetilde{c}_{2}\widetilde{c}_{3}+\widetilde{c}_{4})|D|.$ Therefore from \eqref{a1}, we have
	\begin{align}\label{a2}
	Lv :&= v_{t}(t,x)-d\Delta v(t,x) +(\gamma_{1}- c_{1}(t,x))v(t,x) \nonumber\\&\geq e^{-\gamma_{1} t}\left[c_{3}(t,x)\int_{D}c_{2}(t,x)v(t,x) dx+\int_{D} c_{4}(t,x)v(t,x)dx\right], \ x \in D,\ 0<t\leq T,  \\
	v(t,x) &\geq 0,\ x \in \partial D,\ 0<t\leq T,\nonumber \\
	v(0,x) &\geq 0,\ x \in D.\nonumber
	\end{align}
	We need to show that $v(t,x) \geq 0$ on $\overline{D}_{T}.$ Suppose that $v$ has a negative minimum for some $(t_{0},x_{0}) \in \overline{D}_{T}$. If $x_{0} \in \partial D,$ then $v(t_{0},x_{0}) \geq 0$ for $0<t_{0}\leq T,$ which is a contradiction, since $v(t_{0},x_{0}) < 0$. Hence $(t_{0},x_{0}) \in D_{T},$  and it is immediate that  $v_{t}(t_{0}, x_{0}) = 0,$ for $0<t_0<T$ and $v_{t}(t_{0}, x_{0}) \leq 0,$ for $t_0=T$,   $\Delta v(t_{0},x_{0}) \geq 0$ and 
	\begin{align}
	Lv(t_{0},x_{0}) \leq (\gamma_{1}-c_{1}(t_{0},x_{0}))v(t_{0},x_{0}). \nonumber 
	\end{align} 
	From \eqref{a2}, we have 
	\begin{align}
&e^{-\gamma_{1} t_0}\left[	c_{3}(t_{0},x_{0})\int_{D}c_{2}(t_{0},x)v(t_{0},x) dx+\int_{D} c_{4}(t_{0},x)v(t_{0},x)dx\right] \leq (\gamma_{1}-c_{1}(t_{0},x_{0}))v(t_{0},x_{0}), \nonumber\\
&\Rightarrow 	(\widetilde{c}_{3}\widetilde{c}_{2} |D|+ \widetilde{c}_{4}|D|)v(t_{0},x_{0}) \leq (\gamma_{1}-\widetilde{c}_{1})v(t_{0},x_{0}), \nonumber\\
&\Rightarrow 	\gamma_{1} \leq \widetilde{c}_{1}+(\widetilde{c}_{3}\widetilde{c}_{2} + \widetilde{c}_{4})|D|, \nonumber
	\end{align}
	which is a contradiction, since $\gamma_{1} > \widetilde{c}_{1}+(\widetilde{c}_{2}\widetilde{c}_{3}+\widetilde{c}_{4})|D|.$ Hence $v(t,x) \geq 0$ on $\overline{D}_{T}$  implies that $w(t,x)=e^{\gamma_{1} t}v(t,x) \geq 0$ in $\overline{D}_{T}.$ 
\end{proof}
  
    Let $N>0$ be a constant and define the following stopping time:
    $$\tau_{N}(\omega)=\inf \left\lbrace t>0:\ |B^{H}_{t}(\omega)|\geq N\right\rbrace.$$
   Clearly, $$\left\lbrace \omega \in \Omega :\ \tau_{N}(\omega) \leq t \right\rbrace=\left\lbrace \omega \in \Omega:\ B^{H}_{\ast}(t) \geq N \right\rbrace.$$
    Let $z(t,x)$ be the solution of 
    \begin{equation*}
    \left\{
    \begin{aligned}
    \frac{\partial z(t,x)}{\partial t}&=(\Delta+\gamma)z(t,x)+ e^{\left( q-1\right) \eta B^{H}(t) }\int_{D}|z(t,y)|^{q-1}z(t,y)dy-ke^{\left( p-1\right) \eta B^{H}(t) }|z(t,x)|^{p-1}z(t,x)\\
    &\qquad +\delta e^{(m+n-1) \eta B^{H}(t)}|z(t,x)|^{m}\int_{D} |z(t,y)|^{n-1}z(t,y)dy,\ (x,t) \in D \times (0,T \wedge \tau_{N}],\\
    z(t,x)&=0, \ \ (x,t) \in \partial D \times (0,T \wedge \tau_{N}], \\
    z(0,x)&=f(x), \ x \in D.  
    \end{aligned}
    \right.
    \end{equation*} 
    Here $f(x) \geq 0,\ c_{1}(t,x)=\gamma-ke^{(p-1) \eta B^{H}(t)}|z(t,x)|^{p-1},\ c_{2}(t,x)=\delta e^{(m+n-1) \eta B^{H}(t)}|z(t,x)|^{n-1} \geq 0,\ c_{3}(t,x)=|z(t,x)|^{m}\geq 0,\ c_{4}(t,x)= e^{\left( q-1\right) \eta B^{H}(t)}|z(t,y)|^{q-1}\geq 0$ and $c_{1}(t,x), c_{2}(t,x), c_{3}(t,x)$ and $c_{4}(t,x)$ are bounded in $D \times (0,T \wedge \tau_{N}].$ By using Lemma \ref{l1}, we have $z(t,x) \geq 0$ and hence, $z(t,x)$ is the solution of \eqref{s1}. Moreover, by uniqueness, we have $v(t,x)=z(t,x)\geq 0.$
    Again by using Lemma \ref{l1}, we obtain  that if $f(x)$ is not identically zero, then $v(t,x)>0$ in $D \times (0,T \wedge \tau_{N}]$.

    \begin{Rem}
    If the initial data $f$ is non-negative, then  the weak solution $u(\cdot,\cdot)$ and $v(\cdot,\cdot)$, of \eqref{b1} and \eqref{s1} respectively are non-negative as well. 
    \end{Rem}
    
    Using the increasing property of the function $x^{i}$ for $x \geq 0$, $i \in \left\lbrace p,q,m,n\right\rbrace$ with $p,q,n>1$ and $m \geq 0,$ as a consequence of Lemma \ref{l1}, we have the following comparison principle:
    \begin{proposition}\label{p2}
    Let $v(\cdot,\cdot)$ be the solution of poblem \eqref{s1} and the positive function $V \in C^{2,1}(D_{T}) \cap C(\overline{D}_{T}),$ where $D_{T}=D \times (0,T \wedge \tau_{N}]$ satisfy
    \begin{equation*}
    \left\{
    \begin{aligned}
    \frac{\partial V(t,x)}{\partial t}&\geq (\leq)(\Delta+\gamma)V(t,x)+ e^{\left( q-1\right) \eta B^{H}(t) }\int_{D}V^{q}(t,y)dy-ke^{\left( p-1\right) \eta B^{H}(t) }V^{p}(t,x)\\
    &\qquad +\delta e^{(m+n-1) \eta B^{H}(t)}V^{m}(t,x)\int_{D} V^{n}(t,y)dy,\ (x,t) \in D \times (0,T \wedge \tau_{N}], \\
    V(t,x)&\geq (\leq)0, \ \ (x,t) \in \partial D \times (0,T \wedge \tau_{N}], \\
    V(0,x)&\geq (\leq)f(x), \ x \in D.  
    \end{aligned}
    \right.
    \end{equation*}
    Then 
    \begin{align}
    V(t,x) \geq (\leq)v(t,x)\ \mbox{on}\ \overline{D} \times [0,T\wedge \tau_{N}].\nonumber 
    \end{align}      
    \end{proposition}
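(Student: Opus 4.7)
The plan is to reduce Proposition \ref{p2} directly to the linear comparison Lemma \ref{l1} by working with the difference $w := V - v$ and linearizing every nonlinearity via the mean value theorem. I will focus on the super-solution case ($\geq$); the sub-solution case follows by the same argument applied to $-w$.

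First I would subtract the equation satisfied by $v$ from the differential inequality satisfied by $V$ to obtain, on $D\times(0,T\wedge\tau_N]$,
\begin{align*}
w_t - \Delta w - \gamma w &\geq e^{(q-1)\eta B^H(t)}\int_D \bigl(V^q(t,y)-v^q(t,y)\bigr)\,dy - k e^{(p-1)\eta B^H(t)}\bigl(V^p(t,x)-v^p(t,x)\bigr)\\
&\qquad + \delta e^{(m+n-1)\eta B^H(t)}\left[V^m(t,x)\int_D V^n(t,y)\,dy - v^m(t,x)\int_D v^n(t,y)\,dy\right].
\end{align*}
The mixed nonlocal product is then rewritten by adding and subtracting $V^m(t,x)\int_D v^n(t,y)\,dy$, splitting it into $V^m\int_D(V^n-v^n)\,dy + (V^m-v^m)\int_D v^n\,dy$. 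For each of the four power differences $V^i - v^i$ (with $i \in \{p,q,m,n\}$) I would apply the elementary identity $a^i - b^i = i\,\xi_i^{i-1}(a-b)$ with $\xi_i$ an intermediate value between $v$ and $V$; since both $v$ and $V$ are continuous and non-negative on the compact set $\overline{D}\times[0,T\wedge\tau_N]$, every $\xi_i^{i-1}$ is bounded.

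After this linearization the inequality for $w$ takes precisely the structural form \eqref{a1} required by Lemma \ref{l1}, namely
\begin{align*}
w_t - \Delta w \geq c_1(t,x)\,w(t,x) + c_3(t,x)\int_D c_2(t,y)\,w(t,y)\,dy + \int_D c_4(t,y)\,w(t,y)\,dy,
\end{align*}
with the identifications
\begin{align*}
c_1 &= \gamma - kp\,e^{(p-1)\eta B^H(t)}\xi_p^{p-1} + \delta m\,e^{(m+n-1)\eta B^H(t)}\xi_m^{m-1}\!\!\int_D v^n\,dy,\\
c_2 &= n\,\xi_n^{n-1},\qquad c_3 = \delta\,e^{(m+n-1)\eta B^H(t)}V^m,\qquad c_4 = q\,e^{(q-1)\eta B^H(t)}\xi_q^{q-1}.
\end{align*}
On $[0,T\wedge\tau_N]$ we have $|B^H(t)|\leq N$, so all exponential factors are bounded; together with the continuity of $v,V$ on $\overline{D}_T$ this gives boundedness of each $c_i$. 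Crucially, $c_2,c_3,c_4\geq 0$ because $v,V\geq 0$ and $p,q,n>1$, $m\geq 0$, while $c_1$ may have either sign — but Lemma \ref{l1} imposes no sign condition on $c_1$, so this poses no problem. The boundary and initial data for $w$ are nonnegative by hypothesis: $w(t,x)=V(t,x)-0\geq 0$ on $\partial D$ and $w(0,x)\geq f(x)-f(x)=0$ on $D$.

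Applying Lemma \ref{l1} then yields $w\geq 0$, i.e.\ $V\geq v$, on $\overline{D}\times[0,T\wedge\tau_N]$. The sub-solution case is handled identically after replacing $w$ by $v-V$. The main subtlety I anticipate is the nonlocal mixed term $\delta V^m\int V^n - \delta v^m\int v^n$: a naive mean-value estimate does not fit the template of Lemma \ref{l1}, and the add-subtract decomposition above (keeping $V^m$ as the outside factor in the first piece, so that the remaining positive coefficient multiplies $\int(V^n-v^n)$ rather than a sign-indefinite quantity) is what makes the resulting $c_3$ genuinely non-negative. The measurability and boundedness of the intermediate values $\xi_i$ is routine since they can be taken as explicit Borel selections between $v(t,x)$ and $V(t,x)$.
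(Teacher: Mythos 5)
Your argument is correct and is essentially the paper's own (unwritten) proof: the paper gives no details for Proposition \ref{p2}, merely asserting that it is "a consequence of Lemma \ref{l1}" via the monotonicity of $x\mapsto x^{i}$, and your subtraction, mean-value linearization, and add-and-subtract splitting of the mixed nonlocal term (keeping $V^{m}$ outside so that $c_{3}\geq 0$) is precisely the reduction that sentence is pointing at. The one claim that is not quite right is that "every $\xi_i^{i-1}$ is bounded": for $i=m\in(0,1)$ the factor $\xi_m^{m-1}$ blows up wherever $V$ and $v$ both vanish (e.g.\ near $\partial D$), so $c_{1}$ need not be bounded there and Lemma \ref{l1} does not directly apply; this gap is inherited from the paper's own sketch and is harmless for $m=0$ (the term vanishes identically) and for $m\geq 1$, while the case $0<m<1$ requires an extra regularization of $x^{m}$ or a restriction on $m$.
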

	
	The following theorem establishes the existence of a  global solution $v(\cdot,\cdot)$ of \eqref{s1} with $|D| \leq k.$
	\begin{theorem}\label{thm4.1}
	Let $v(\cdot,\cdot)$ be a solution of \eqref{s1} and $\frac{1}{2}<H<1$. If $p=q=n=\beta\ (say),\ m=0$  and $|D| \leq k,$ then there exist $\sigma_{0}>0$ and $M_{1}>0$ such that  $$v(t,x) \leq M_{1} e^{-\sigma_{0} t +(\beta-1) \eta B^{H}_{\ast}(t)},\ \mbox{for all}\ (t,x) \in [0, \infty) \times \overline{D}.$$ 
	\end{theorem}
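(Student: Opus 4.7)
The plan is to combine an $L^{2}$-energy estimate with a comparison argument based on the super-solution framework of Proposition \ref{p2}. Substituting $p=q=n=\beta$ and $m=0$ collapses the nonlinear drift in \eqref{s1} to $(1+\delta)\,e^{(\beta-1)\eta B^{H}(t)}\!\int_{D}v^{\beta}(t,y)\,dy - k\,e^{(\beta-1)\eta B^{H}(t)}v^{\beta}(t,x)$, where the hypothesis $|D|\le k$ forces the pointwise dissipation to dominate the integrated production term pathwise, while the Dirichlet spectral gap $\sigma_{0}:=\lambda_{1}-\gamma$ will supply the exponential rate of decay.

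As a natural first attempt I would look for a spatially constant pathwise supersolution $V(t,x)=c(t)$, so that $\Delta V=0$ and the supersolution inequality reduces to the scalar Bernoulli ODE
$c'(t)=\gamma\,c(t)-[k-(1+\delta)|D|]\,e^{(\beta-1)\eta B^{H}(t)}\,c^{\beta}(t)$ with $c(0)=\|f\|_{\infty}$. The substitution $Y=c^{1-\beta}$ linearises this to $Y'(t)=-(\beta-1)\gamma\,Y(t)+(\beta-1)[k-(1+\delta)|D|]\,e^{(\beta-1)\eta B^{H}(t)}$, which is integrated explicitly by variation of parameters; using the pathwise bound $B^{H}(s)\ge -B^{H}_{\ast}(t)$ for $0\le s\le t$ one extracts a bound of the form $c(t)\le M_{1}\,e^{-\sigma_{0}t+(\beta-1)\eta B^{H}_{\ast}(t)}$, and Proposition \ref{p2} delivers $v(t,x)\le V(t,x)$.

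The principal obstacle is that this scalar ODE, severed from the Laplacian, does not by itself carry the spectral gap: when $\gamma>0$ it merely produces exponential growth at rate $\gamma$, and when $\gamma=0$ only polynomial decay. To inject the dissipation rate $\lambda_{1}$ I would first derive an $L^{2}$-energy estimate---multiplying \eqref{s1} by $v$, integrating over $D$, applying Poincar\'e's inequality $\int_{D}|\nabla v|^{2}\,dx\ge \lambda_{1}\int_{D}v^{2}\,dx$, and using the H\"older bound $\bigl(\int_{D}v\bigr)\!\bigl(\int_{D}v^{\beta}\bigr)\le |D|\int_{D}v^{\beta+1}$ to absorb the integral production term into the pointwise dissipation---yielding $\|v(t)\|_{L^{2}}\le \|f\|_{L^{2}}\,e^{-\sigma_{0}t}$ under $|D|\le k$. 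Bootstrapping to $L^{\infty}$ is then achieved via the mild formulation \eqref{e3}: the ultracontractivity estimate $\|T_{t}g\|_{\infty}\le C_{d}\,t^{-d/4}e^{-\lambda_{1}t}\|g\|_{L^{2}}$ for the Dirichlet heat semigroup, combined with the crude uniform path bound $e^{(\beta-1)\eta B^{H}(s)}\le e^{(\beta-1)\eta B^{H}_{\ast}(t)}$ for $s\le t$ in the Duhamel term, produces a Gronwall–Bernoulli integral inequality for the rescaled unknown $F(t):=e^{\sigma_{0}t-(\beta-1)\eta B^{H}_{\ast}(t)}\|v(t)\|_{\infty}$; closing this inequality so that the constant $M_{1}$ remains independent of $t$ hinges crucially on the non-positivity of $(1+\delta)|D|-k$ enforced by the standing assumption $|D|\le k$.
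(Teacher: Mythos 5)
Your overall strategy---an energy estimate giving exponential decay of an integral norm of $v$, followed by an upgrade to a uniform bound carrying the factor $e^{(\beta-1)\eta B^{H}_{\ast}(t)}$---is essentially the one the paper relies on (the paper just cites \cite[Theorem 3.3]{liang}, whose argument is an $L^{\mu+1}$ energy estimate with $\mu+1\ge\beta$ followed by comparison with an auxiliary parabolic problem whose source is $g(t)=\int_{D}v^{\beta}(t,x)\,dx$, and a final multiplication by $e^{(\beta-1)\eta B^{H}_{\ast}(t)}$ to build a supersolution). But your execution has a genuine gap in the bootstrap: you estimate the energy in $L^{2}$, while the quantity entering the Duhamel term of \eqref{e3} is $\int_{D}v^{\beta}(r,y)\,dy=\|v(r)\|_{L^{\beta}}^{\beta}$, which is not controlled by $\|v(r)\|_{L^{2}}$ when $\beta>2$. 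Interpolating via $\int_{D}v^{\beta}\le\|v\|_{\infty}^{\beta-2}\|v\|_{L^{2}}^{2}$ still leaves a power $F(r)^{\beta-2}$ in your integral inequality, which is superlinear once $\beta>3$; a superlinear Gronwall--Bernoulli inequality does not close globally in time without a smallness condition that the theorem does not assume. Nor can the hypothesis $|D|\le k$ rescue this step in the way you suggest: inside $T_{t-r}[\,\cdot\,]$ the dissipative term $-k e^{(p-1)\eta B^{H}(r)}v^{p}$ can only be discarded (by positivity of the semigroup), not used to cancel the nonlocal production, since $T_{t-r}$ applied to the constant $\int_{D}v^{\beta}dy$ and to the pointwise function $v^{\beta}(\cdot)$ are not comparable. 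This is exactly why the paper's route runs the energy estimate in $L^{\mu+1}$ with $\mu+1\ge\beta$, so that $g(t)=\int_{D}v^{\beta}dx$ decays exponentially on its own and the comparison problem is driven by a known source rather than by a nonlinear Duhamel inequality.

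A second, more elementary error: with $m=0$, $n=\beta$ the two nonlocal terms combine to $(1+\delta)e^{(\beta-1)\eta B^{H}(t)}\int_{D}v^{\beta}dy$, and after the H\"older step $\bigl(\int_{D}v\bigr)\bigl(\int_{D}v^{\beta}\bigr)\le|D|\int_{D}v^{\beta+1}$ your $L^{2}$ balance requires $(1+\delta)|D|\le k$; the assertion that $|D|\le k$ forces $(1+\delta)|D|-k\le 0$ is false whenever $\delta>0$. (The paper inherits a version of this problem by quoting \cite{liang}, where there is no $\delta$-term, but your write-up should not claim the implication.) Finally, your rate $\sigma_{0}=\lambda_{1}-\gamma$ is positive only if $\gamma<\lambda_{1}$, a restriction on which the whole decay argument rests and which you should state explicitly.
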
 
	The proof follows from  \cite[Theorem 3.3]{liang}.

	\noindent
	Our next aim is to obtain random times $\tau_{*}$ and $\tau^*$  such that $0\leq \tau_*\leq \tau\leq \tau^*.$ 
	\section{A Lower Bound for $\tau$}\label{sec3}
	This section is devoted to obtain the lower bounds $\tau_{\ast}$ to the blow-up time $\tau$ such that $\tau_{\ast} \leq \tau$ of \eqref{b1} with $\frac{1}{2}<H<1$. Also, we provide a sufficient condition for the existence of a global mild solution of	the equation \eqref{b1} for $|D|>k$. 
	
	Let us first consider the problem:
    \begin{equation}\label{re3}
    \left\{
    \begin{aligned}
    \frac{\partial w(t,x)}{\partial t}&=(\Delta+\gamma)w(t,x)+ e^{\left( q-1\right)\eta B^{H}(t) }\int_{D}w^{q}(t,y)dy+\delta e^{(m+n-1) \eta B^{H}(t)}w^{m}(t,x)\int_{D} w^{n}(t,y)dy,\\
    w(t,x)&=0, \ \ x \in \partial D,\ t>0, \\
    w(0,x)&=f(x), \ \  x \in D.  
    \end{aligned}
    \right.
    \end{equation}
    Then the mild solution (see \cite[Chapter 4]{pazy}), $w(\cdot,\cdot)$ of \eqref{re3} satisfies the following integral equation:
    \begin{align}
    w(t,x)=e^{\gamma t} T_{t}f(x)+&\int_{0}^{t}  e^{\gamma (t-r)} T_{t-r}\Bigg[e^{\left( q-1\right) \eta B^{H}(r) }\int_{D}w^{q}(r,y)dy\nonumber\\
    &\qquad+\delta e^{(m+n-1) \eta B^{H}(r)}w^{m}(r,x)\int_{D} w^{n}(r,y)dy \Bigg] dr,
    \end{align}
    for each $x \in D$ and $0 \leq t<\tau.$ The following result provides a lower bound for the finite-time blow-up of the solution of \eqref{b1}.

	\begin{theorem}\label{t2} Assume that $n,p,q>1,\ m\geq 0$ with $ m+n \geq q \geq p >1$,  $f$ is a non-negative bounded function and $|D|>k$. Let $\tau_{\ast}$ be  given by
	\begin{align}\label{t11}
	\tau_{\ast} = \inf \Bigg\{ t\geq 0 :&\int_{0}^{t}\left( e^{\left( q-1\right) \eta B^{H}(r)} \vee e^{(m+n-1) \eta B^{H}(r)} \right)\| e^{\gamma r }T_{r}\|_{\infty}^{m+n-1}dr\nonumber\\ &	\qquad\geq\frac{1}{2M(m+n+q-1) \|f\|_{\infty}^{m+n-1}} \Bigg\},
	\end{align}
    where $M = \max\left\lbrace |D|,\delta|D| \right\rbrace\ \mbox{and}\ \| f \|_{\infty}:=\displaystyle\sup_{x \in D} f(x).$  	Then $\tau_{\ast}\leq\tau$. 
      \end{theorem}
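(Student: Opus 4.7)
The strategy is to drop the non-positive absorption term by a comparison argument, reduce the resulting problem to a nonlinear Volterra integral inequality in the sup-norm, and close it by a bootstrap continuity argument whose threshold matches exactly the integral defining $\tau_\ast$.

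Since $-k e^{(p-1)\eta B^H(t)} v^p \le 0$ on $\overline D \times [0,\tau\wedge\tau_N]$, Proposition \ref{p2} gives $0\le v(t,x) \le w(t,x)$ pointwise, where $w$ is the unique non-negative mild solution of the reduced problem \eqref{re3} (the same equation with the absorption term deleted). By Theorem \ref{thm2.1} and the a.s.\ finiteness of $B^H_\ast$ from \eqref{fb1}, it therefore suffices to control $\phi(t):=\|w(t,\cdot)\|_\infty$ on $[0,\tau_\ast)$. Writing $w$ in its Duhamel form, using the sub-Markovian contractivity $\|T_r\|_{L^\infty\to L^\infty}\le 1$ and the crude estimate $\int_D w^\alpha(r,y)\,dy\le |D|\phi(r)^\alpha$, and setting $M:=\max\{|D|,\delta|D|\}$, $G(r):=e^{(q-1)\eta B^H(r)}\vee e^{(m+n-1)\eta B^H(r)}$, $c(r):=\|e^{\gamma r}T_r\|_\infty$, I arrive at the nonlinear Volterra inequality
\[
\phi(t)\le c(t)\|f\|_\infty + M\int_0^t c(t-r)\,G(r)\bigl[\phi(r)^q+\phi(r)^{m+n}\bigr]\,dr.
\]

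The heart of the argument is a bootstrap closure. Define $T:=\inf\{t\ge 0:\phi(t)\ge 2c(t)\|f\|_\infty\}$; since $\phi(0)=\|f\|_\infty<2c(0)\|f\|_\infty$ and both $\phi$ and $c$ are continuous, $T>0$. Fix $t<T\wedge\tau_\ast$ and insert the ansatz $\phi(r)\le 2c(r)\|f\|_\infty$ into the Volterra inequality above. The hypothesis $m+n\ge q>1$ enables one to dominate $\phi(r)^q+\phi(r)^{m+n}$ by an expression governed by the single largest exponent $m+n-1$, while semigroup sub-multiplicativity $c(s+t)\le c(s)c(t)$ lets the factor $c(t-r)$ be absorbed so that the full $t$-dependence is pulled out as $c(t)$. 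Collecting the universal constants into $2M(m+n+q-1)$, careful book-keeping yields
\[
\phi(t)\le c(t)\|f\|_\infty\Bigl[1+2M(m+n+q-1)\|f\|_\infty^{m+n-1}\int_0^t G(r)\,c(r)^{m+n-1}\,dr\Bigr].
\]
For $t<\tau_\ast$ the integral in the bracket is strictly less than $\tfrac{1}{2M(m+n+q-1)\|f\|_\infty^{m+n-1}}$ by the very definition of $\tau_\ast$, so the bracket itself is strictly less than $2$ and hence $\phi(t)<2c(t)\|f\|_\infty$. This forces $T\ge\tau_\ast$, so $\phi$ stays finite on $[0,\tau_\ast)$. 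A standard semilinear parabolic continuation argument (cf.\ \cite[Chapter 7]{friedman1964}) then extends $w$, and hence $v$, past every $t<\tau_\ast$, which gives $\tau_\ast\le\tau$ as required.

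The main obstacle is the constant-counting in the closure step: collapsing the two heterogeneous nonlinearities $\phi^q$ and $\phi^{m+n}$, each carrying its own exponential noise factor $e^{(q-1)\eta B^H(r)}$ or $e^{(m+n-1)\eta B^H(r)}$, into a single estimate proportional to $G(r)\,c(r)^{m+n-1}$ with the precise coefficient $2M(m+n+q-1)$ dictated by the definition of $\tau_\ast$. This rests on the hypothesis $m+n\ge q>1$, elementary inequalities of the type $a^{q-1}\le 1\vee a^{m+n-1}$ for $a\ge 0$, sub-multiplicativity of the semigroup norm $c$, and the contractivity $\|T_r\|_{L^\infty\to L^\infty}\le 1$; the standing assumption $|D|>k$ is not used in the proof itself but merely identifies the parameter regime in which blow-up can occur and the lower bound is informative.
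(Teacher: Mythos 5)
Your overall strategy---compare $v$ with the solution $w$ of the reduced problem \eqref{re3} via Proposition \ref{p2}, then show $w$ stays finite on $[0,\tau_{\ast})$---is the same as the paper's, but the closure of your scalar Volterra inequality has two genuine gaps.

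First, the absorption of $c(t-r)$ goes the wrong way. Submultiplicativity of the operator norm gives $c(t)\le c(t-r)\,c(r)$, i.e.\ $c(t-r)\ge c(t)/c(r)$, so you cannot bound $c(t-r)\,c(r)^{q}$ from above by $c(t)\,c(r)^{q-1}$; for the Dirichlet semigroup, where $c(r)\sim e^{(\gamma-\lambda_1)r}$, the inequality you need is strictly false. The paper avoids this entirely by never passing to the sup-norm inside the Duhamel integral: it keeps the pointwise comparison function $e^{\gamma t}T_t\|f\|_\infty\,\mathscr{G}(t)$ and uses the exact semigroup identity $e^{\gamma(t-r)}T_{t-r}\bigl(e^{\gamma r}T_r\|f\|_\infty\bigr)=e^{\gamma t}T_t\|f\|_\infty$, so that only the remaining $(q-1)$ and $(m+n-1)$ powers are estimated in norm. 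Your reduction to $\phi(t)=\|w(t,\cdot)\|_\infty$ with prefactor $c(t)$ discards exactly the structure needed to pull $c(t)$ out in front.

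Second, the constants do not close. Inserting the ansatz $\phi(r)\le 2c(r)\|f\|_\infty$ into $\phi^{q}+\phi^{m+n}$ produces factors $2^{q}+2^{m+n}$, which exceed $2(m+n+q-1)$ once $m+n$ is moderately large, so no book-keeping can produce the bracket $1+2M(m+n+q-1)\|f\|_\infty^{m+n-1}\int_0^t G(r)c(r)^{m+n-1}\,dr$ from a threshold-$2$ continuity argument. The paper's proof is not a factor-$2$ bootstrap: it introduces $\mathscr{G}(t)=\bigl[1-2(m+n+q-1)M\int_0^t G(r)c(r)^{m+n-1}\|f\|_\infty^{m+n-1}\,dr\bigr]^{-1/(m+n+q-1)}$, which solves the exact comparison ODE $\mathscr{G}'=2M\,G\,c^{m+n-1}\|f\|_\infty^{m+n-1}\mathscr{G}^{m+n+q}$ with $\mathscr{G}(0)=1$, shows that $v\le e^{\gamma t}T_t\|f\|_\infty\mathscr{G}(t)$ implies $\mathcal{H}v\le e^{\gamma t}T_t\|f\|_\infty\mathscr{G}(t)$, and obtains $w$ as the increasing limit of the monotone iterates $u^{(n)}=\mathcal{H}u^{(n-1)}$, all dominated by this function, which is finite precisely on $[0,\tau_{\ast})$. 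To repair your argument you would need to replace the constant threshold $2$ by the unbounded comparison function $\mathscr{G}$, and in addition work pointwise (or otherwise exploit $T_{t-r}T_r=T_t$ exactly) to resolve the $c(t-r)$ issue.
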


	\begin{proof}
	Let $w(\cdot,\cdot)$ solve the equation \eqref{re3}. Then, we have
	\begin{align}
	w(t,x)&=e^{\gamma t} T_{t}f(x)+\int_{0}^{t}  e^{\left( q-1\right) \eta B^{H}(r)+\gamma (t-r)} T_{t-r}\left( \int_{D}w^{q}(r,y)dy\right)dr \nonumber\\
	&\qquad+\delta \int_{0}^{t}  e^{(m+n-1) \eta B^{H}(r)+\gamma (t-r)} T_{t-r}\left(w^{m}(r,x)\int_{D} w^{n}(r,y)dy \right)  dr. \nonumber 
	\end{align}
	For all $x\in D,  \ t\geq 0,$ let us define the operator $\mathcal{H}$ as follows:
	\begin{align}
	\mathcal{H}v(t,x)&=e^{\gamma t} T_{t}f(x)+\int_{0}^{t}  e^{\left( q-1\right) \eta B^{H}(r)+\gamma (t-r)} T_{t-r}\left( \int_{D}v^{q}(r,y)dy\right)dr \nonumber\\  
	&\qquad+\delta\int_{0}^{t}  e^{(m+n-1) \eta B^{H}(r)+\gamma (t-r)} T_{t-r}\left(v^{m}(r,x)\int_{D} v^{n}(r,y)dy \right)  dr, \nonumber
	\end{align}
	where $v$ is any non-negative, bounded and measurable function.
    First, we shall prove that 
	\begin{eqnarray}
    \mathcal{H}w(t,x)=w(t,x),\ \ x\in D, \ \ 0 \leq t<\tau_{*},\nonumber
	\end{eqnarray}
	for some non-negative bounded and measurable function $w$.  Moreover, on the set $t < \tau_{*},$ we set
	\begin{align}
	\mathscr{G}(t)&=\Bigg[ 1-2(m+n+q-1)M \int_{0}^{t}\left( e^{\left( q-1\right) \eta B^{H}(r)} \vee e^{(m+n-1) \eta B^{H}(r)} \right)\nonumber\\
	&\qquad\qquad\qquad\times\| e^{\gamma r }T_{r}\|_{\infty}^{m+n-1}\|f\|_{\infty}^{m+n-1} dr \Bigg]^{-\frac{1}{m+n+q-1}}.\nonumber
	\end{align}
    Then, it can be easily seen that 
    \begin{equation}
    	\left\{
	\begin{aligned} 
	\frac{d \mathscr{G}(t)}{dt}&=2M \left( e^{\left( q-1\right) \eta B^{H}(t)} \vee e^{(m+n-1) \eta B^{H}(t)} \right) \| e^{\gamma t }T_{t}\|_{\infty}^{m+n-1}\|f\|_{\infty}^{m+n-1} \mathscr{G}^{m+n+q}(t), \nonumber\\ 
	\mathscr{G}(0)&=1, \nonumber
	\end{aligned}
	\right.
	\end{equation}
	so that 
	\begin{align}
	\mathscr{G}(t)&=1+2M\int_{0}^{t} \left( e^{\left(q-1\right) \eta B^{H}(r)} \vee e^{(m+n-1) \eta B^{H}(r)} \right)\| e^{\gamma r }T_{r}\|_{\infty}^{m+n-1}\|f\|_{\infty}^{m+n-1} \mathscr{G}^{m+n+q}(r) dr. \nonumber
	\end{align}
	Let us choose $v\geq 0$ such that $$v(t,x)\leq e^{\gamma t }T_{t}\|f\|_{\infty}\mathscr{G}(t),$$ for $x\in D$ and $t<\tau_{*}.$ Then $e^{\gamma t}T_{t}\|f\|_{\infty}\leq \mathcal{H} v(t,x)$ and
	\begin{align}
    \mathcal{H}v(t,x)&=e^{\gamma t} T_{t}f(x)+\int_{0}^{t}  e^{\left( q-1\right) \eta B^{H}(r)+\gamma (t-r)} T_{t-r}\left( \int_{D}v^{q}(r,y)dy\right) dr \nonumber\\ 
	&\quad+\delta\int_{0}^{t}  e^{(m+n-1) \eta B^{H}(r)+\gamma (t-r)} T_{t-r}\left(v^{m}(r,x)\int_{D} v^{n}(r,y)dy \right)  dr\nonumber\\
	& \leq e^{\gamma t} T_{t}f(x)+\int_{0}^{t}  e^{\left( q-1\right) \eta B^{H}(r)+\gamma (t-r)} T_{t-r}\left( \int_{D}\left(e^{\gamma r }T_{r}\|f\|_{\infty}\mathscr{G}(r)\right)^{q}dy\right)dr \nonumber\\ 
	&\quad+\delta\int_{0}^{t}  e^{(m+n-1) \eta B^{H}(r)+\gamma (t-r)} T_{t-r}\left(\left( e^{\gamma r }T_{r}\|f\|_{\infty}\mathscr{G}(r)\right)^{m}\int_{D} \left( e^{\gamma r }T_{r}\|f\|_{\infty}\mathscr{G}(r)\right)^{n}dy \right)  dr\nonumber\\
	&\leq e^{\gamma t} T_{t}\|f\|_{\infty}+\int_{0}^{t}  e^{\left( q-1\right) \eta B^{H}(r)+\gamma (t-r)} \|e^{\gamma r }T_{r}\|_{\infty}^{q-1} \|f\|_{\infty}^{q-1}T_{t-r}(e^{\gamma r }T_{r}\|f\|_{\infty}) \mathscr{G}^{q}(r)|D|dr \nonumber\\
	&\quad+\delta\int_{0}^{t}  e^{(m+n-1) \eta B^{H}(r)+\gamma (t-r)} T_{t-r}\left( e^{\gamma r }T_{r}\|f\|_{\infty}\right) \| e^{\gamma r }T_{r}\|_{\infty}^{n+m-1}\|f\|_{\infty}^{m+n-1}\mathscr{G}^{m+n}(r)|D|   dr\nonumber\\
   	&= e^{\gamma t} T_{t}\|f\|_{\infty}\Bigg[1+|D|\int_{0}^{t}e^{\left( q-1\right) \eta B^{H}(r)} \|e^{\gamma r }T_{r}\|_{\infty}^{q-1} \|f\|_{\infty}^{q-1} \mathscr{G}^{q}(r)dr \nonumber\\ 
   	&\quad+\delta|D|\int_{0}^{t}  e^{(m+n-1) \eta B^{H}(r)} \| e^{\gamma r }T_{r}\|_{\infty}^{m+n-1}\|f\|_{\infty}^{m+n-1}\mathscr{G}^{m+n}(r)dr \Bigg]\nonumber\\
   	&\leq e^{\gamma t} T_{t}\|f\|_{\infty}\Bigg[1+M\int_{0}^{t} \left( e^{\left( q-1\right) \eta B^{H}(r)} \vee e^{(m+n-1) \eta B^{H}(r)} \right)\nonumber\\
   	&\quad\times \left( \|e^{\gamma r }T_{r}\|_{\infty}^{q-1} \|f\|_{\infty}^{q-1}+\| e^{\gamma r }T_{r}\|_{\infty}^{m+n-1}\|f\|_{\infty}^{m+n-1}\right) \left(  \mathscr{G}^{q}(r)+\mathscr{G}^{m+n}(r)\right) dr\Bigg] \nonumber\\
   	&\leq e^{\gamma t} T_{t}\|f\|_{\infty}\Bigg[1+2M\int_{0}^{t} \left( e^{\left(q-1\right) \eta B^{H}(r)} \vee e^{(m+n-1) \eta B^{H}(r)} \right)\nonumber\\
   	&\quad\times\| e^{\gamma r }T_{r}\|_{\infty}^{m+n-1}\|f\|_{\infty}^{m+n-1} \mathscr{G}^{m+n+q}(r) dr\Bigg] \nonumber\\  
   	&=e^{\gamma t}T_{t}\|f\|_{\infty} \mathscr{G}(t), \nonumber 
	\end{align}
	where $M = \max\left\lbrace |D|,\delta|D| \right\rbrace$. Thus, we have
	$$e^{\gamma t}T_{t}f(x)\leq \mathcal{H} v(t,x)  \leq e^{\gamma t} T_{t}\|f\|_{\infty}\mathscr{G}(t).$$
	For $x\in D,\ 0\leq t\leq \tau_{*},$    let us take,  $$u^{(0)}(t,x)=e^{\gamma t}T_{t}f(x)
	\mbox{ and }	u^{(n)}(t,x)=\mathcal{H} u^{(n-1)}(t,x).$$

	Our aim is to show that the sequence $\{ u^{(n)} \}_{n\in\mathbb{N}}$ of functions is monotonically increasing. Note that 
	\begin{align}
	u^{(0)}(t,x)&\leq e^{\gamma t} T_{t}f(x)+\int_{0}^{t}  e^{\left( q-1\right) \eta B^{H}(r)+\gamma (t-r)} T_{t-r}\left( \int_{D}(u^{(0)})^{q}(r,y)dy\right)dr \nonumber\\
	&\quad+\delta\int_{0}^{t}  e^{(m+n-1) \eta B^{H}(r)+\gamma (t-r)} T_{t-r}\left((u^{(0)})^{m}(r,x)\int_{D} (u^{(0)})^{n}(r,y)dy \right)  dr\nonumber\\
	&= \mathcal{H}u^{(0)}(t,x) = u^{(1)}(t,x).\nonumber
	\end{align}
	Now assume that $u^{(n)}\geq u^{(n-1)},$ for some $n \geq 1.$  Then the monotonicity of $\mathcal{H}$ leads to the inequality
	\begin{eqnarray}
	u^{(n+1)}(t,x)=\mathcal{H}  u^{(n)}(t,x)\geq \mathcal{H} u^{(n-1)}(t,x)=u^{(n)}(t,x),\nonumber
	\end{eqnarray}
	and therefore the limit 
	\begin{eqnarray*}
	\lim_{n\rightarrow \infty}u^{(n)}(t,x)=w(t,x),
	\end{eqnarray*}
	exists for $x\in D$ and $0 \leq t< \tau_{*}.$ As a result of the monotone convergence theorem, we obtain 
	\begin{eqnarray}
	w(t,x)=\mathcal{H}w(t,x).\nonumber
	\end{eqnarray}
	Moreover,  we have $$\mathcal{H} w (t,x)\leq e^{\gamma t} T_{t}\|f\|_{\infty} \mathscr{G}(t),$$
	so that
	\begin{align}\label{inq1}
	0 \leq w(t,x) \leq& e^{\gamma t}T_{t}\|f\|_{\infty}\Bigg[ 1-2(m+n+q-1)M \int_{0}^{t}\left( e^{\left(q-1\right) \eta B^{H}(r)} \vee e^{(m+n-1) \eta B^{H}(r)} \right)\nonumber\\
	&\qquad\times\| e^{\gamma r }T_{r}\|_{\infty}^{m+n-1}\|f\|_{\infty}^{m+n-1}  dr \Bigg]^{-\frac{1}{m+n+q-1}}. 	
	\end{align}
	From \eqref{re3}, we infer that 
	\begin{align}
	\frac{\partial w(t,x)}{\partial t}&\geq (\Delta+\gamma)w(t,x)+ e^{\left( q-1\right)\eta B^{H}(t) }\int_{D}w^{q}(t,y)dy\nonumber\\
	&\quad-ke^{\left( p-1\right) \eta B^{H}(t) }w^{p}(t,x)+\delta e^{(m+n-1) \eta B^{H}(t)}w^{m}(t,x)\int_{D} w^{n}(t,y)dy.\nonumber 
	\end{align}
	By Proposition \ref{p2}, we deduce that $0 \leq v(t,x) \leq w(t,x)$, for each $x \in D$ and $t\geq 0$, where $v(\cdot,\cdot)$ is the unique solution of \eqref{s1}. Therefore from \eqref{inq1}, we get
	\begin{align}
	0 \leq v(t,x)& \leq e^{\gamma t}T_{t}\|f\|_{\infty}\Bigg[ 1-2(m+n+q-1)M \int_{0}^{t}\left( e^{\left(q-1\right) \eta B^{H}(r)} \vee e^{(m+n-1) \eta B^{H}(r)} \right)\nonumber\\
	&\qquad\times\| e^{\gamma r }T_{r}\|_{\infty}^{m+n-1}\|f\|_{\infty}^{m+n-1}  dr \Bigg]^{-\frac{1}{m+n+q-1}}, \nonumber 	
	\end{align}	
	which completes the proof of the theorem.
	\end{proof}
  
	
	The following results show that the solution $u(\cdot, \cdot)$ of the equation \eqref{b1} exists globally.	
	\begin{theorem}\label{mthm1} 
	Assume that $n,p,q>1,\ m \geq 0$ with $m+n \geq q \geq p.$ If $f$ is a non-negaive bounded function, $|D|>k$ and the inequality 
	\begin{align} \label{b2}
	\Bigg\{2(m+n+q-1)M &\int_{0}^{t}\left( e^{\left(q-1\right) \eta B^{H}(r)} \vee e^{(m+n-1) \eta B^{H}(r)} \right)\| e^{\gamma r }T_{r}\|_{\infty}^{m+n-1}\|f\|_{\infty}^{m+n-1}  dr\Bigg\}<1,
	\end{align}  
    holds, then the weak solution  $v(t,x)$ of \eqref{s1} exists globally for all $(t,x) \in [0,\infty) \times D.$ Moreover, for all $(t,x) \in [0,\infty) \times D$
	\begin{align*}
	0 \leq v(t,x) &\leq e^{\gamma t}T_{t}\|f\|_{\infty}\Bigg[ 1-2(m+n+q-1)M \int_{0}^{t}\left( e^{\left(q-1\right) \eta B^{H}(r)} \vee e^{(m+n-1) \eta B^{H}(r)} \right)\nonumber\\
	&\qquad\times\| e^{\gamma r }T_{r}\|_{\infty}^{m+n-1}\|f\|_{\infty}^{m+n-1}  dr \Bigg]^{-\frac{1}{m+n+q-1}}, \nonumber
	\end{align*} 
	where $M = \max\left\lbrace |D|,\delta|D| \right\rbrace.$
	\end{theorem}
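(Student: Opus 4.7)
The strategy is to observe that Theorem \ref{mthm1} is essentially a direct consequence of Theorem \ref{t2}: condition \eqref{b2} says exactly that the integral defining $\tau_{\ast}$ in \eqref{t11} never reaches the threshold $\frac{1}{2M(m+n+q-1)\|f\|_{\infty}^{m+n-1}}$, hence $\tau_{\ast}=+\infty$. Once this is established, the bound derived inside the proof of Theorem \ref{t2} holds on the whole time axis and global existence follows.

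More concretely, I would proceed as follows. First, I would rewrite the defining inequality in \eqref{t11} in the form $2M(m+n+q-1)\|f\|_{\infty}^{m+n-1}\int_{0}^{t}(\cdots)\,dr\geq 1$, and observe that \eqref{b2} is the negation of this inequality for every $t\geq 0$. Consequently, under \eqref{b2} we have $\tau_{\ast}(\omega)=+\infty$ for the relevant $\omega$. Next, I would repeat the construction in Theorem \ref{t2}: introduce the auxiliary majorant problem \eqref{re3} (no absorption term), the operator $\mathcal{H}$, and the monotone iteration $u^{(0)}(t,x)=e^{\gamma t}T_t f(x)$, $u^{(n)}=\mathcal{H}u^{(n-1)}$. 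Because $\tau_{\ast}=+\infty$, the scalar ODE solution
\begin{align*}
\mathscr{G}(t)=\Bigl[1-2(m+n+q-1)M\int_{0}^{t}\bigl(e^{(q-1)\eta B^{H}(r)}\vee e^{(m+n-1)\eta B^{H}(r)}\bigr)\|e^{\gamma r}T_r\|_{\infty}^{m+n-1}\|f\|_{\infty}^{m+n-1}\,dr\Bigr]^{-\frac{1}{m+n+q-1}}
\end{align*}
is finite for every $t\in[0,\infty)$. The monotone-iteration calculation from Theorem \ref{t2} then shows $u^{(n)}\uparrow w$ on $[0,\infty)\times D$ with $0\leq w(t,x)\leq e^{\gamma t}T_t\|f\|_{\infty}\mathscr{G}(t)<\infty$ for every $t\geq 0$.

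The final step is the comparison between the solution $v$ of \eqref{s1} and the majorant $w$. The function $w$ satisfies \eqref{re3}, hence
\[
\frac{\partial w}{\partial t}\geq (\Delta+\gamma)w+e^{(q-1)\eta B^{H}(t)}\!\int_{D}w^{q}\,dy-k e^{(p-1)\eta B^{H}(t)}w^{p}+\delta e^{(m+n-1)\eta B^{H}(t)}w^{m}\!\int_{D}w^{n}\,dy,
\]
because the subtracted term is non-negative. Applying Proposition \ref{p2} on every interval $[0,T\wedge\tau_N]$ and letting $N\to\infty$ using \eqref{fb1}, I obtain $0\leq v(t,x)\leq w(t,x)$ for all $(t,x)\in[0,\infty)\times D$, which gives the pointwise bound claimed in the theorem. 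Since this bound is finite for every $t\geq 0$, the solution $v$ does not blow up in finite time; by Theorem \ref{thm2.1} together with the a.s.\ continuity of $B^{H}(\cdot)$ and \eqref{fb1}, the same global existence transfers to $u$.

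The proof is mostly bookkeeping on top of Theorem \ref{t2}, so I do not anticipate a serious obstacle. The only point that requires minor care is verifying that Proposition \ref{p2}, which is stated on $D\times(0,T\wedge\tau_N]$, can be upgraded to the whole half-line by letting $N\to\infty$; this is legitimate because \eqref{fb1} gives $\tau_N\to\infty$ almost surely, so the pathwise comparison extends without loss.
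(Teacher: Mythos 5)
Your proposal is correct and follows exactly the route the paper intends: the paper's proof of Theorem \ref{mthm1} is the single remark that it ``immediately follows from Theorem \ref{t2},'' and your argument simply makes explicit that condition \eqref{b2} forces $\tau_{\ast}=+\infty$ in \eqref{t11}, so the pointwise bound already established in the proof of Theorem \ref{t2} (via the monotone iteration, the majorant $\mathscr{G}$, and Proposition \ref{p2}) persists for all $t\geq 0$. The additional care you take in extending the comparison past $T\wedge\tau_{N}$ using \eqref{fb1} is a reasonable elaboration of what the paper leaves implicit.
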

	The proof of Theorem \ref{mthm1} immediately follows from Theorem \ref{t2}.
	
	The following thoerem gives sharp bounds for $\left\lbrace p_{t}(x,y), \ t > 0 \right\rbrace$ (see \cite[Theorem 1.1]{wang1992}).
	\begin{theorem}\label{thm4.4}
	(\cite{wang1992}). Let $\varphi$ be the first (normalized) Dirichlet eigenfunction of \eqref{a3}  on a connected bounded $C^{1, \alpha}$-domain $D \subset \mathbb{R}^{d}$, where $\alpha>0$ and $d \geq 1$, and let $\{p_{t}(x,y), \ t > 0 \}$ be the corresponding Dirichlet heat kernel. Then, there exists a constant $c > 0$ such that 
	\begin{align}\label{c2}
	\max \left\lbrace 1,\frac{1}{c}t^{-\frac{(d+2)}{2}} \right\rbrace \leq e^{\lambda_{1}t}\sup_{x,y \in D} \frac{p_{t}(x,y)}{\varphi(x) \varphi(y)} \leq 1+c(1 \wedge t)^{-\frac{(d+2)}{2}}e^{-(\lambda_{2}-\lambda_{1})t}, \ t>0,   
	\end{align}	
	where $\lambda_{1},\lambda_{2}$ are the first two Dirichlet eigenvalues of  the Laplacian with $\lambda_{2}>\lambda_{1}$. This estimate is sharp for both short and long times.
	\end{theorem}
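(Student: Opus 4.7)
The plan is to decouple the problem into the short-time regime $t \in (0,1]$ and the long-time regime $t \geq 1$, since the two bounds in \eqref{c2} reflect entirely different mechanisms. The long-time behavior is governed by the spectral gap $\lambda_2 - \lambda_1$, while the short-time factor $t^{-(d+2)/2}$ is driven by boundary decay of $\varphi$.

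For the long-time regime, I would expand the heat kernel in the $L^{2}(D)$-orthonormal eigenbasis $\{\varphi_{n}\}_{n\geq 1}$ of $-\Delta$ with Dirichlet conditions, giving
\begin{equation*}
e^{\lambda_{1}t}\,\frac{p_{t}(x,y)}{\varphi(x)\varphi(y)} = 1 + \sum_{n\geq 2} e^{-(\lambda_{n}-\lambda_{1})t}\,\frac{\varphi_{n}(x)\varphi_{n}(y)}{\varphi(x)\varphi(y)}.
\end{equation*}
The $C^{1,\alpha}$-regularity of $\partial D$ together with the Hopf lemma yields a two-sided comparison $\varphi(x)\asymp d(x,\partial D)$, and elliptic regularity plus Sobolev embedding gives $|\varphi_{n}(x)|\lesssim \lambda_{n}^{\kappa} d(x,\partial D)$ for some $\kappa$ depending on $d$. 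Combined with Weyl's asymptotic $\lambda_{n}\asymp n^{2/d}$, the tail series converges and is bounded by $c\,e^{-(\lambda_{2}-\lambda_{1})t}$ uniformly in $x,y$, which gives both the upper bound and the matching lower bound $\geq 1$ for $t\geq 1$.

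For the short-time regime, I would start from the classical Gaussian upper bound $p_{t}(x,y)\leq C t^{-d/2}\exp(-c|x-y|^{2}/t)$ valid on bounded smooth domains, and then improve it to a bound of the form $p_{t}(x,y)\leq C t^{-(d+2)/2}\varphi(x)\varphi(y)$ via a parabolic-boundary argument: by freezing one variable and applying the maximum principle together with the barrier $(x,t)\mapsto t^{-(d+2)/2}\varphi(x)$ in a tubular neighborhood of $\partial D$. The factor $d(x,\partial D)d(y,\partial D)/t\asymp \varphi(x)\varphi(y)/t$ is exactly what one extracts from Gaussian estimates after two reflections across the boundary, explaining the additional $t^{-1}$. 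The matching short-time lower bound $p_{t}(x,y)\geq c^{-1} t^{-(d+2)/2}\varphi(x)\varphi(y)$ would come from the parabolic Harnack inequality in the interior, chained to the boundary via the Hopf lemma applied to $p_{t}(\cdot,y)$, using the normalization and a covering argument.

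Finally, I would glue the two regimes with the Chapman--Kolmogorov identity $p_{t+1}(x,y)=\int_{D}p_{1}(x,z)p_{t}(z,y)\,dz$, which upgrades the short-time bound at $t=1$ to the long-time bound and vice versa, and then absorb the remaining constants into $c$. The main obstacle I anticipate is proving the sharp short-time \emph{lower} bound $c^{-1}t^{-(d+2)/2}\varphi(x)\varphi(y)\leq e^{\lambda_{1}t}p_{t}(x,y)$ when one or both of $x,y$ lie arbitrarily close to $\partial D$: the Harnack chain degenerates there, and one must trade the Gaussian factor $\exp(-c|x-y|^{2}/t)$ against the boundary decay of $\varphi$ very carefully. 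All other estimates are either standard (spectral expansion, Gaussian upper bound) or rely directly on the $C^{1,\alpha}$-hypothesis to obtain $\varphi\asymp d(\cdot,\partial D)$, which drives the whole structure of \eqref{c2}.
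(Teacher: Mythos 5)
This statement is not proved in the paper at all: it is quoted verbatim as Theorem~1.1 of the cited reference \cite{wang1992} (Ouhabaz--Wang), so there is no internal proof to compare your argument against. Judged on its own terms, your outline is directionally consistent with how such results are actually established --- the upper bound does come from the two-sided Dirichlet heat kernel estimate $p_{t}(x,y)\asymp t^{-d/2}\bigl(1\wedge\tfrac{d(x,\partial D)\,d(y,\partial D)}{t}\bigr)e^{-c|x-y|^{2}/t}$ divided by $\varphi(x)\varphi(y)\asymp d(x,\partial D)\,d(y,\partial D)$, which is exactly where $t^{-(d+2)/2}$ originates, and the large-time behavior does come from the spectral expansion controlled by the gap $\lambda_{2}-\lambda_{1}$ together with intrinsic ultracontractivity. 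One simplification you missed: the lower bound in \eqref{c2} is a bound on the \emph{supremum} over $x,y\in D$, not a pointwise bound, so the ``$\geq 1$'' part follows almost immediately from the eigenfunction identity $\int_{D}p_{t}(x,y)\varphi(y)\,dy=e^{-\lambda_{1}t}\varphi(x)$ and needs no Harnack chain at all; similarly the short-time lower bound only needs to be realized at \emph{some} pair $(x,y)$ (e.g.\ both near the boundary), which defuses much of the difficulty you flag.

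That said, there is a genuine gap at the crux of your sketch. The step ``improve the Gaussian bound to $p_{t}(x,y)\leq Ct^{-(d+2)/2}\varphi(x)\varphi(y)$ via a barrier $(x,t)\mapsto t^{-(d+2)/2}\varphi(x)$ and the maximum principle'' is not a proof: the proposed barrier is not a supersolution of the heat equation near $t=0$ (its time derivative has the wrong sign and the wrong homogeneity), and the standard route instead extracts one factor of $d(\cdot,\partial D)/\sqrt{t}$ at a time from the boundary Harnack principle or from the hitting-probability representation of the killed kernel, for which $C^{1,\alpha}$ (or at least Lipschitz with small constant) regularity is essential. You also acknowledge, correctly, that the sharp short-time estimate when $x$ or $y$ approaches $\partial D$ is the main obstacle and leave it unresolved. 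Since the paper's authors deliberately import this theorem rather than prove it, the honest conclusion is that your proposal is a plausible roadmap to the literature result but not a self-contained proof; completing it would essentially reproduce the analysis of \cite{wang1992}.
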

	The following result establishes the existence of a global weak solution to the problem \eqref{b1} under the assumption that $f(x) \leq C_{0} \varphi(x)$, for each $x \in D$ and $C_{0}>0$ by using Theorem \ref{thm4.4}.
	
	\begin{theorem}\label{ex1}
	Let $D$ be a bounded domain $D \subset \mathbb{R}^{d}\  (d \geq 1)$ with smooth boundary. Assume that $n,q>1,\ m \geq 0$ along with the conditions $m+n \geq q$ and $|D|>k$. If the initial data $f(x) \leq C_{0} \varphi(x)$  satisfies
	\begin{align} 
    \mathcal{K}\int_{0}^{\infty} \left( e^{\left(q-1\right) \eta B^{H}(r)} \vee e^{(m+n-1) \eta B^{H}(r)} \right) e^{-\left( \lambda_{1}-\gamma\right)(m+n-1) r} dr<1,\nonumber
	\end{align}  
	where $C_{0}>0$, $\mathcal{K}=2M(m+n+q-1)\left(C_{0}\left\| \varphi \right\|_{\infty}^{2}(1+c) \right)^{m+n-1}$ and $M = \max\left\lbrace |D|,\delta|D| \right\rbrace,$
	then the solution $v(\cdot,\cdot)$ of \eqref{s1} exists globally. 
	\end{theorem}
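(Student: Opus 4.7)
The plan is to reduce Theorem \ref{ex1} to Theorem \ref{mthm1} by sharpening the semigroup factor $\|e^{\gamma r}T_{r}\|_{\infty}^{m+n-1}\|f\|_{\infty}^{m+n-1}$ that appears in condition \eqref{b2} under the additional hypothesis $f(x)\leq C_{0}\varphi(x)$. With only the trivial contraction $\|T_{r}\|_{\infty\to\infty}\leq 1$, this factor equals $e^{\gamma(m+n-1)r}\|f\|_{\infty}^{m+n-1}$, which grows as $r\to\infty$ whenever $\gamma>0$ and prevents any global-in-time bound. Replacing it by the decaying exponential $e^{-(\lambda_{1}-\gamma)(m+n-1)r}$ (with a constant carrying $\|\varphi\|_{\infty}^{2}(1+c)$) is exactly what matches the integrability assumption in the statement.

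First, I would derive the refined semigroup bound. Applying the upper estimate of Theorem \ref{thm4.4}, for every $x\in D$ and $r>0$,
\[
T_{r}f(x)=\int_{D}p_{r}(x,y)f(y)\,dy\leq C_{0}\,e^{-\lambda_{1}r}\bigl[1+c(1\wedge r)^{-(d+2)/2}e^{-(\lambda_{2}-\lambda_{1})r}\bigr]\varphi(x)\int_{D}\varphi(y)^{2}\,dy,
\]
where $f(y)\leq C_{0}\varphi(y)$ was used. Bounding $\int_{D}\varphi^{2}\leq \|\varphi\|_{\infty}\int_{D}\varphi=\|\varphi\|_{\infty}$ and $\varphi(x)\leq\|\varphi\|_{\infty}$ yields
\[
\|T_{r}f\|_{\infty}\leq C_{0}\|\varphi\|_{\infty}^{2}\bigl[1+c(1\wedge r)^{-(d+2)/2}e^{-(\lambda_{2}-\lambda_{1})r}\bigr]e^{-\lambda_{1}r}.
\]
On $r\in[1,\infty)$ the bracketed quantity is dominated by $1+c$, giving $\|T_{r}f\|_{\infty}\leq C_{0}\|\varphi\|_{\infty}^{2}(1+c)e^{-\lambda_{1}r}$. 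On $r\in[0,1)$ the bound is saved by $L^{\infty}$-contractivity $\|T_{r}f\|_{\infty}\leq\|f\|_{\infty}\leq C_{0}\|\varphi\|_{\infty}$. Combining both regimes (and inflating the constant by a harmless finite factor if necessary), one arrives at the uniform estimate
\[
\|e^{\gamma r}T_{r}f\|_{\infty}\leq C_{0}\|\varphi\|_{\infty}^{2}(1+c)\,e^{(\gamma-\lambda_{1})r},\qquad r\geq 0.
\]

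Next, I would re-run the monotone iteration from the proof of Theorem \ref{t2}, substituting this improved estimate wherever the factor $\|e^{\gamma r}T_{r}\|_{\infty}^{m+n-1}\|f\|_{\infty}^{m+n-1}$ was used. The scalar part becomes $(C_{0}\|\varphi\|_{\infty}^{2}(1+c))^{m+n-1}$ and the exponential part $e^{\gamma(m+n-1)r}$ is replaced by $e^{-(\lambda_{1}-\gamma)(m+n-1)r}$, so the auxiliary function $\mathscr{G}(t)$ of Theorem \ref{t2} takes the form
\[
\mathscr{G}(t)=\biggl[1-\mathcal{K}\int_{0}^{t}\bigl(e^{(q-1)\eta B^{H}(r)}\vee e^{(m+n-1)\eta B^{H}(r)}\bigr)e^{-(\lambda_{1}-\gamma)(m+n-1)r}\,dr\biggr]^{-1/(m+n+q-1)}.
\]
Under the standing hypothesis the denominator is bounded below by a positive constant uniformly in $t\in[0,\infty)$, so $\mathscr{G}$ stays finite globally. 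Invoking Theorem \ref{mthm1}, or equivalently Proposition \ref{p2} with the new $\mathscr{G}$ as a super-solution, then yields the a priori bound on $v(t,x)$ on every finite interval and hence global existence of the solution of \eqref{s1}.

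The main technical subtlety is the short-time singularity $(1\wedge r)^{-(d+2)/2}$ in Theorem \ref{thm4.4}, which rules out a literal uniform bound of the form $C_{0}\|\varphi\|_{\infty}^{2}(1+c)e^{-\lambda_{1}r}$ on the full half-line. The split at $r=1$ described above resolves this: since $[0,1]$ has finite Lebesgue measure and the noise factor $e^{(m+n-1)\eta B^{H}(r)}$ is a.s.\ locally bounded by \eqref{fb1}, the contribution of $[0,1]$ to $\mathcal{K}\int(\cdot)\,dr$ is finite and can be absorbed into the constant $\mathcal{K}$, preserving the quantitative hypothesis of Theorem \ref{ex1} intact and completing the reduction to Theorem \ref{mthm1}.
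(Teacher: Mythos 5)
Your proposal is correct and follows essentially the same route as the paper: use the sharp heat-kernel estimate of Theorem \ref{thm4.4} together with $f\leq C_{0}\varphi$ to obtain $\|e^{\gamma r}T_{r}f\|_{\infty}\leq C_{0}\|\varphi\|_{\infty}^{2}(1+c)e^{-(\lambda_{1}-\gamma)r}$, and then verify the hypothesis \eqref{b2} of Theorem \ref{mthm1}. If anything you are more careful than the paper, which silently drops the factor $(1\wedge t)^{-(d+2)/2}$ for $t<1$; your patch via $L^{\infty}$-contractivity on $[0,1)$ does, however, inflate the constant, so strictly speaking the hypothesis should be read with a correspondingly enlarged $\mathcal{K}$.
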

	\begin{proof}
	For any $f(x) \geq 0, x \in D$ and any $t>0$ 
	\begin{align}
	T_{t}f(x)&=\int_{D} p_{t}(x,y)f(y)dy =\int_{D} e^{\lambda_{1}t}\frac{p_{t}(x,y)}{\varphi(x) \varphi(y)}e^{-\lambda_{1}t}\varphi(x) \varphi(y) f(y)dy \nonumber\\
	&\leq \left\| f \right\|_{\infty} \left\| \varphi \right\|_{\infty} \int_{D} e^{\lambda_{1}t}\frac{p_{t}(x,y)}{\varphi(x) \varphi(y)}e^{-\lambda_{1}t} \varphi(y) dy. \nonumber
	\end{align} 
	By using \eqref{c2}, we obtain 
   \begin{align} 
	T_{t}f(x)& \leq \left\| f \right\|_{\infty} \left\| \varphi \right\|_{\infty} \int_{D} \left( 1+c(1 \wedge t)^{-\frac{(d+2)}{2}}e^{-(\lambda_{2}-\lambda_{1})t} \right) e^{-\lambda_{1}t} \varphi(y) dy \nonumber\\
	&\leq \left\| f \right\|_{\infty} \left\| \varphi \right\|_{\infty}\left(e^{-\lambda_{1}t}+ce^{-\lambda_{2}t} \right)  \int_{D} \varphi(y) dy \nonumber\\
	&\leq \left\| f \right\|_{\infty} \left\| \varphi \right\|_{\infty}\left(1+c \right)e^{-\lambda_{1}t} . \nonumber
	\end{align} 
	Therefore, we get
	\begin{align}\label{c3}
	\left\| e^{\gamma t}T_{t}f \right\|_{\infty} \leq C_{0}\left\| \varphi \right\|_{\infty}^{2}\left(1+c \right)e^{-\left( \lambda_{1}-\gamma\right) t},
	\end{align} 
	and the condition \eqref{b2} in Theorem $\ref{mthm1}$  is satisfied, provided 
	 \begin{align} 
	 \mathcal{K}\int_{0}^{\infty} \left( e^{\left(q-1\right) \eta B^{H}(r)} \vee e^{(m+n-1) \eta B^{H}(r)} \right) e^{-\left( \lambda_{1}-\gamma\right)(m+n-1) r} dr<1,\nonumber
	 \end{align}  
	where $\mathcal{K}=2M(m+n+q-1)\left(C_{0}\left\| \varphi \right\|_{\infty}^{2}(1+c) \right)^{m+n-1}$ and $M = \max\left\lbrace |D|,\delta|D| \right\rbrace.$	Then the remaining proof follows by Theorem $\ref{mthm1}$.
    \end{proof}

    \section{Upper bound for $\tau$}\label{sec4}
    In this section, we obtain an upper bound $\tau^*$ for the explosion time $\tau$ under suitable assumptions  for the case $\frac{1}{2}<H<1$.  Using the fact that $\varphi(x)=0$ for $x \in \partial D,$ we have
    \begin{equation}{\label{as6}}
    \begin{aligned}
    v(s,\Delta \varphi)=\int_{D} v(s,x)\Delta\varphi(x)dx=\int_{D} \Delta v(s,x)\varphi(x)dx=\Delta v(s,\varphi)=-\lambda_{1} v(s,\varphi).
    \end{aligned}
    \end{equation}
    We recall that $\lambda_{1}$ is the first eigenvalue and $\varphi$ is the coresponding  eigenfunction  of $-\Delta$ on $D$  and $\displaystyle\int_{D}\varphi(x)dx=1.$ Let  $t\in(0,\infty)$ be some random number and $b>1$ be chosen so that 
     \begin{equation}\label{B1}
     \left.
     \begin{aligned}
     b^{q-p}e^{-\eta (m+n-1)B_{\ast}^{H}(t)} \geq \frac{ kC_{1}^{p}+\lambda_{1}C_{1}}{|D|^{1-q}},\ \ &
     b^{q-p} e^{-\eta(q-p)B_{\ast}^{H}(t)} \geq \frac{k C_{1}^{p}}{|D|^{1-q}}\\ \mbox{ and }\   
     b^{q-p} e^{-\eta(q-1)B_{\ast}^{H}(t)} &\geq \frac{2k\left(\int_{D}\varphi^{\frac{q}{q-p}}(x)dx \right)^{\frac{q-p}{p}}}{\left(\int_{D} \varphi^{p+1}(x)dx \right)^{\frac{q-p}{p}}}     
     \end{aligned}
     \right\},
     \end{equation}
    where $C_{1}=\displaystyle\sup_{x \in D} \varphi(x)$.
    \begin{theorem}\label{thm5.1}
    Suppose that $n,p,q>1,\ m\geq 0$ with $q \geq p$. For the given initial data $f(x) \geq b\varphi(x),$ $x\in D$ and for each $b>1$ such that \eqref{B1} holds.
   Then, we have the following results:
    \item [1.] If $m+n=q=\mu\ (say),$ then $\tau \leq \tau^{\ast}_{1},$ where $\tau^{\ast}_{1}$ is given by
    \begin{align}\label{ST1}
    \tau^{\ast}_{1} =& \inf \Biggl\{ t \geq 0 : \int_{0}^{t}e^{ \eta (\mu-1)B^{H}(s)+(-\lambda_{1}+\gamma)(\mu-1)s}ds\nonumber\\
    &\qquad \geq J^{1-\mu}(0)\Bigg[(\mu-1)\left(\frac{1}{2} \left(\displaystyle\int_{D}\varphi^{\frac{\mu}{\mu-p}}(x)dx \right)^{\frac{p-\mu}{p}}+\delta \left(\displaystyle\int_{D}\varphi ^{\frac{n}{n-1}}(x)dx\right)^{1-n}\right)\Bigg]^{-1}  \Biggr\}. 
    \end{align}
    \item [2.] If $m+n>q,$ let $A_{0}= \left( \frac{m+n-q}{m+n} \right) \left( \frac{m+n}{q}\right)^{\frac{q}{m+n-q}}$ and $\epsilon_{0} \leq \left(J^{q}(0)/A_{0} \right)^{\frac{q}{m+n-q}} $, then $\tau \leq \tau^{\ast}_{2},$ where $\tau^{\ast}_{2}$ is given by
    \begin{align}\label{ST2}
    &\tau^{\ast}_{2} = \inf \Bigg\{ t \geq 0 : \int_{0}^{t}\left( e^{\eta (q-1)B^{H}(s)} \wedge e^{\eta (m+n-1)B^{H}(s)} \right) e^{-(-\lambda_{1}+\gamma)(q-1)s}ds\geq 2{J^{1-q}(0)}\nonumber\\
    &\times\Bigg[((q-1)(-\lambda_{1}+\gamma)) \left( \left(\int_{D}\varphi^{\frac{q}{q-p}}(x) \right)^{\frac{p-q}{p}}+\delta \left(\epsilon_{0} - \frac{A_{0} \epsilon_{0}^{\frac{m+n}{m+n-q}}}{J^{q}(0)} \right) \left(\int_{D}\varphi ^{\frac{n}{n-1}}(x)dx\right)^{1-n}\right) \Bigg]^{-1}  \Bigg\}, 
    \end{align}
    where $J(0)=\displaystyle\int_{D}f(x)\varphi(x)dx.$
    \end{theorem}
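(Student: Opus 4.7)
The plan is to run the classical Kaplan eigenfunction argument on the pathwise random PDE \eqref{s1}, using the comparison principle together with the conditions in \eqref{B1} to reduce the analysis to an integrable scalar differential inequality. I would introduce the functional
\[
J(t):=\int_{D} v(t,x)\varphi(x)\,dx,
\]
with $J(0)=\int_D f(x)\varphi(x)\,dx$, and test \eqref{s1} against $\varphi$; using \eqref{as6}, $\Delta\varphi=-\lambda_{1}\varphi$, and the normalisation $\int_D\varphi\,dx=1$, this produces
\[
J'(t)=(-\lambda_{1}+\gamma)J(t)+e^{(q-1)\eta B^{H}(t)}\int_{D}v^{q}\,dx-k\,e^{(p-1)\eta B^{H}(t)}\int_{D}v^{p}\varphi\,dx+\delta\,e^{(m+n-1)\eta B^{H}(t)}\int_{D}v^{m}\varphi\,dx\int_{D}v^{n}\,dx.
\]

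Next I would establish a pointwise lower bound on $v$: since $f\geq b\varphi$ and the nonlinear source terms are non-negative, the function $Z(t,x):=b\varphi(x)e^{(-\lambda_{1}+\gamma)t}$ is a sub-solution of \eqref{s1}, so Proposition \ref{p2} yields $v(t,x)\geq Z(t,x)$ for $(t,x)\in D\times[0,\tau\wedge\tau_{N}]$. With this bound I would apply the three inequalities of \eqref{B1} term by term to show that the source $e^{(q-1)\eta B^{H}}\int v^{q}\,dx$ sacrifices at most half of itself to absorb both the absorption $k\,e^{(p-1)\eta B^{H}}\int v^{p}\varphi\,dx$ and the linear drift $(-\lambda_{1}+\gamma)J$. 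The principal ingredients will be H\"older's inequality with conjugate exponents $q/p$ and $q/(q-p)$ relating $\int v^{p}\varphi\,dx$ to $\int v^{q}\,dx$, Jensen's inequality on the probability measure $\varphi\,dx$ to give $\int v^{m}\varphi\,dx\geq J^{m}$, and a second H\"older estimate giving $\int v^{n}\,dx\geq J^{n}/(\int\varphi^{n/(n-1)}\,dx)^{n-1}$. This bookkeeping should deliver the differential inequality
\[
J'(t)\geq(-\lambda_{1}+\gamma)J(t)+\tfrac{1}{2}e^{(q-1)\eta B^{H}(t)}\left(\int_{D}\varphi^{\frac{q}{q-p}}\,dx\right)^{\frac{p-q}{p}}J^{q}(t)+\delta\,e^{(m+n-1)\eta B^{H}(t)}\left(\int_{D}\varphi^{\frac{n}{n-1}}\,dx\right)^{1-n}J^{m+n}(t).
\]

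I would then finish case by case. In Case 1 ($m+n=q=\mu$) the two sources combine into a single $J^{\mu}$ term with coefficient $C=\tfrac{1}{2}(\int_D\varphi^{\mu/(\mu-p)}\,dx)^{(p-\mu)/p}+\delta(\int_D\varphi^{n/(n-1)}\,dx)^{1-n}$. The substitution $U(t):=J(t)e^{(\lambda_{1}-\gamma)t}$ cancels the linear drift and leaves the separable inequality $U'(t)\geq C\,e^{(\mu-1)\eta B^{H}(t)+(-\lambda_{1}+\gamma)(\mu-1)t}U^{\mu}(t)$; integrating $U^{-\mu}\,dU$ between $0$ and $t$ and using $U(0)=J(0)$ produces criterion \eqref{ST1}. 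In Case 2 ($m+n>q$) I would first trade part of the $J^{m+n}$ source for a multiple of $J^{q}$ via Young's inequality with conjugate exponents $(m+n)/q$ and $(m+n)/(m+n-q)$; the associated Young constant is exactly $A_{0}=\tfrac{m+n-q}{m+n}(\tfrac{m+n}{q})^{q/(m+n-q)}$, and the hypothesis $\epsilon_{0}\leq(J^{q}(0)/A_{0})^{q/(m+n-q)}$ is precisely what keeps the residual coefficient $\delta(\epsilon_{0}-A_{0}\epsilon_{0}^{(m+n)/(m+n-q)}/J^{q}(0))$ non-negative. After this trade the inequality collapses into a single-power form whose random coefficient is $e^{(q-1)\eta B^{H}}\wedge e^{(m+n-1)\eta B^{H}}$, and the same substitution-and-separation argument yields \eqref{ST2}.

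I expect the principal difficulty to sit in the calibration step. Each of the three inequalities in \eqref{B1} has been engineered to neutralise one specific lower-order contribution (the $-kv^{p}$ absorption against $\int v^{q}$, the drift $-\lambda_{1}J$ against the same source, and an auxiliary control of $\int v^{p}\varphi$ used in the $\delta$-term), so verifying that H\"older's inequality and the sub-solution lower bound combine to produce exactly the coefficients appearing in \eqref{B1} will require careful but routine bookkeeping. Once that step is in place, the ODE comparison in Case 1 and the Young's inequality reduction in Case 2 are essentially mechanical.
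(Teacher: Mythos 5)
Your overall strategy --- the Kaplan eigenfunction functional $J(t)=\int_{D}v(t,x)\varphi(x)\,dx$, a pointwise lower bound on $v$ via the comparison principle of Proposition \ref{p2}, H\"older and Jensen to close the inequality in $J$, direct integration in Case 1, and Young's inequality with the constant $A_{0}$ in Case 2 --- is exactly the route the paper takes, and your case-by-case reductions (including the role of $\epsilon_{0}$ in keeping the residual coefficient $\delta\bigl(\epsilon_{0}-A_{0}\epsilon_{0}^{\frac{m+n}{m+n-q}}/J^{q}(0)\bigr)$ non-negative) reproduce the paper's computations leading to \eqref{ST1} and \eqref{ST2}.

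The one genuine gap is the sub-solution step. You assert that $Z(t,x)=b\varphi(x)e^{(-\lambda_{1}+\gamma)t}$ is a sub-solution of \eqref{s1} ``since the nonlinear source terms are non-negative,'' but the reaction term contains the non-positive absorption $-k\,e^{(p-1)\eta B^{H}(t)}v^{p}$, so that justification fails: one must use the first two conditions of \eqref{B1} to dominate $kC_{1}^{p}$ by $b^{q-p}|D|^{1-q}e^{-\eta(\cdot)B^{H}_{\ast}(t)}$, which is precisely how the paper shows that the \emph{time-independent} function $\bar v(t,x)=b\varphi(x)$ is a sub-solution (via $\partial_{t}\bar v-(\Delta+\gamma)\bar v=(\lambda_{1}-\gamma)b\varphi\le I_{1}(t,x)-\gamma\bar v$, where $I_{1}$ is the full reaction term evaluated at $b\varphi$). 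Moreover, the exponential factor in your $Z$ is not harmless. When $\gamma<\lambda_{1}$ the bound $v\ge b\varphi\,e^{(-\lambda_{1}+\gamma)t}$ decays in time, and then the third condition of \eqref{B1} --- calibrated so that $\bigl(\int_{D}v^{p}(t,x)\varphi(x)\,dx\bigr)^{\frac{q-p}{p}}\ge b^{q-p}\bigl(\int_{D}\varphi^{p+1}(x)\,dx\bigr)^{\frac{q-p}{p}}$ with no time-dependent loss --- no longer suffices to absorb $k\,e^{(p-1)\eta B^{H}(t)}\int_{D}v^{p}\varphi\,dx$ into half of the $\int_{D}v^{q}\,dx$ source; when $\gamma>\lambda_{1}$, $Z$ grows and is in general not a sub-solution at all. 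Replacing $Z$ by the constant-in-time $b\varphi(x)$ repairs the step, and the remainder of your plan then goes through as written.
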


    \begin{proof}  
    Let $\bar{v}(t,x)=b\varphi(x),$ for $t>0,$ $x\in D$, $v(0,x) \geq b \varphi(x)$,  and $v(t,x)=\bar{v}(t,x)=0$ on $\partial D.$ Firstly, we show that $v(t,x) \geq \bar{v}(t,x)=b\varphi(x),$ for each $x\in D$ and $t \geq 0.$ Let us take
    \begin{align}\label{e4}
    &I_{1}(t,x):= e^{ \eta \left(q-1\right) B^{H}(t)}\int_{D}\bar{v}^{q}(t,y)dy-ke^{ \eta (p-1)B^{H}(t)}\bar{v}^{p}(t,x) +\delta e^{\eta (m+n-1) B^{H}(t)}\bar{v}^{m}(t,x)\int_{D}\bar{v}^{n}(t,y)dy\nonumber\\
    &=e^{\eta \left(q-1\right) B^{H}(t)}b^{q}\int_{D}\varphi^{q}(y)dy-ke^{ \eta (p-1) B^{H}(t)}b^{p}\varphi^{p}(x)+\delta e^{\eta (m+n-1) B^{H}(t)}b^{m+n}\varphi^{m}(x)\int_{D}\varphi^{n}(y)dy.
    \end{align}  
    Using H\"older's inequality, we get
    \begin{align}
    \int_{D} \varphi(y) dy &\leq \left( \int_{D} \varphi^{q}(y) dy\right)^{\frac{1}{q}}\left(\int_{D}1^{\frac{q}{q-1}}dx \right)^{\frac{q-1}{q}}  \Rightarrow 
    \int_{D}\varphi^{q}(y)dy  \geq \left( \int_{D}\varphi(y)dy \right)^{q} |D|^{1-q} =|D|^{1-q}. \nonumber 
    \end{align}
    Therefore from \eqref{e4}, we have
    \begin{align}\label{e5}
    I_{1}(t,x)&\geq  e^{ \eta \left(q-1\right) B^{H}(t)}b^{q}|D|^{1-q}-ke^{ \eta (p-1) B^{H}(t)}b^{p}\varphi^{p}(x)\nonumber\\
    &\geq e^{ \eta \left(p-1\right) B^{H}(t)}b^{p} \Big[ e^{ \eta \left(q-p\right) B^{H}(t)}b^{q-p}|D|^{1-q}-k C_{1}^{p}\Big] \nonumber\\
    & \geq e^{- \eta \left(p-1\right) B^{H}_{\ast}(t)}b^{p} \Big[  e^{-\eta \left(q-p\right) B^{H}_{\ast}(t)}b^{q-p}|D|^{1-q}-kC_{1}^{p}\Big].
    \end{align}
    If $m+n \geq q$, the inequality $\eqref{e5}$  reduces to
    \begin{align}\label{com1}
    I_{1}(t,x)&\geq b \Big[ e^{- \eta\left(m+n-1\right) B^{H}_{\ast}(t)}b^{q-p}|D|^{1-q}-kC_{1}^{p}\Big].
    \end{align}
    and the conditions \eqref{B1} leads to $I_1(t,x)\geq \lambda_{1}b \varphi(x),$ for each $x \in D$ and $t \geq 0.$ Note that 
    \begin{align*}
    	\frac{\partial \bar{v}(t,x)}{\partial t}-(\Delta+\gamma)\bar{v}(t,x)=(\lambda_1-\gamma)b\varphi(x)\leq I_1(t,x)-\gamma \bar{v}(t,x)\leq I_1(t,x).
    \end{align*}
    Hence from \eqref{s1} and by comparison principle in Proposition \ref{p2}, we obtain that $v(t,x) \geq \bar{v}(t,x)=b \varphi(x),$ for each $x \in D$ and $t\geq 0.$
 
    
    Let $J(t):=\displaystyle\int_{D}v(t,x)\varphi(x)dx.$ Then, 
    \begin{align}
    J{'}(t)=\int_{D}\frac{\partial v(t,x)}{\partial t}\varphi(x)dx,\nonumber 
    \end{align}
    and by using \eqref{s1} and \eqref{as6}, we have
    \begin{align}
    J{'}(t)&=\int_{D}(\Delta+\gamma) v(t,x)\varphi(x)dx+ e^{\left(q-1\right) \eta B^{H}(t)}\int_{D}\varphi(z)dz\int_{D}v^{q}(t,y)dy\nonumber\\
    &\quad-ke^{(p-1) \eta B^{H}(t)}\int_{D}v^{p}(t,x)\varphi(x)dx +\delta e^{(m+n-1) \eta B^{H}(t)}\int_{D}v^{m}(t,z)\varphi(z)dz\int_{D}v^{n}(t,y)dy\nonumber\\ &=\left(-\lambda_{1}+\gamma\right) \int_{D} v(t,x)\varphi(x)dx+ e^{\left(q-1\right) \eta B^{H}(t)}\int_{D}v^{q}(t,x)dx-ke^{(p-1) \eta B^{H}(t)}\int_{D}v^{p}(t,x)\varphi(x)dx \nonumber\\
    &\qquad+ \delta e^{(m+n-1) \eta B^{H}(t)}\int_{D}v^{m}(t,z)\varphi(z)dz\int_{D}v^{n}(t,y)dy. \nonumber
    \end{align}
    Using H\"older's inequality, we deduce
    \begin{align}
    \int_{D}v^{p}(t,x)\varphi(x)dx &\leq \left(\int_{D}v^{q}(t,x)dx \right)^{\frac{p}{q}} \left(\int_{D}\varphi ^{\frac{q}{q-p}}(x)dx\right)^{\frac{q-p}{q}}, \nonumber\\ \Rightarrow
    \int_{D}v^{q}(t,x)dx &\geq \left( \int_{D}v^{p}(t,x)\varphi(x)dx \right)^{\frac{q}{p}} \left(\int_{D}\varphi ^{\frac{q}{q-p}}(x)dx\right)^{\frac{p-q}{p}} \nonumber
    \end{align}
    and 
    \begin{align}
    \int_{D}v^{n}(t,y)dy \geq \left( \int_{D}v(t,y)\varphi(y)dy \right)^{n} \left(\int_{D}\varphi ^{\frac{n}{n-1}}(y)dy\right)^{1-n}. \nonumber
    \end{align}
    By using Jensen's inequality, we obtain 
    \begin{align}\label{JE}
    \int_{D}v^{m}(t,z)\varphi(z)dz \geq \left( \int_{D}v(t,z)\varphi(z)dz\right)^{m}. 
    \end{align}
    Thus, it is immediate that 
    \begin{align} \label{e6}
    J{'}(t)&\geq\left(-\lambda_{1}+\gamma\right)J(t)+ e^{\left(q-1\right) \eta B^{H}(t)}\left( \int_{D}v^{p}(t,x)\varphi(x)dx \right)^{\frac{q}{p}} \left(\int_{D}\varphi ^{\frac{q}{q-p}}(x)dx\right)^{\frac{p-q}{p}}\nonumber\\
    &\quad-ke^{(p-1) \eta B^{H}(t)}\int_{D}v^{p}(t,x)\varphi(x)dx+\delta e^{(m+n-1) \eta B^{H}(t)}J^{m+n}(t)\left(\int_{D}\varphi ^{\frac{n}{n-1}}(x)dx\right)^{1-n}\nonumber\\
    &=\left(-\lambda_{1}+\gamma\right)J(t)+ e^{\left(q-1\right)\eta B^{H}(t)}\left( \int_{D}v^{p}(t,x)\varphi(x)dx \right)^{\frac{q}{p}}\Bigg[ \left(\int_{D}\varphi ^{\frac{q}{q-p}}(x)dx\right)^{\frac{p-q}{p}}\nonumber\\
    &\quad-ke^{(p-q) \eta B^{H}(t)}\left( \int_{D}v^{p}(t,x)\varphi(x)dx\right)^{\frac{p-q}{p}}\Bigg]+\delta e^{(m+n-1) \eta B^{H}(t)}J^{m+n}(t)\left(\int_{D}\varphi ^{\frac{n}{n-1}}(x)dx\right)^{1-n}.
    \end{align}
    Using assumption \eqref{B1}, we find 
    \begin{align}
    b^{q-p} e^{(q-p) \eta B^{H}(t)} \geq b^{q-p} e^{-(q-p) \eta B^{H}_{\ast}(t)}&= b^{q-p} e^{-(q-1) \eta B^{H}_{\ast}(t)} e^{(p-1) \eta B^{H}_{\ast}(t)}\nonumber\\
    &\geq b^{q-p} e^{-(q-1) \eta B^{H}_{\ast}(t)} \geq \frac{2k \left(\int_{D} \varphi^{\frac{q}{q-p}}(x) dx \right)^{\frac{q-p}{p}}}{\left( \int_{D} \varphi^{p+1}(x) dx \right)^{\frac{q-p}{p}}}. \nonumber
    \end{align}
    We know that $v(t,x)\geq b \varphi(x),$ for each $x \in D$ and $t\geq 0,$ and so
    \begin{align}
    \left(\int_{D} v^{p}(t,x)\varphi(x) dx \right)^{\frac{q-p}{p}} \geq b^{p-q}\left(\int_{D} \varphi^{p+1}(x) dx \right)^{\frac{q-p}{p}}. \nonumber 
    \end{align}
    Therefore from the above inequalities, we have
    \begin{align}
   & b^{q-p} e^{(q-p) \eta B^{H}(t)} \geq \frac{2k \left(\int_{D} \varphi^{\frac{q}{q-p}}(x) dx \right)^{\frac{q-p}{p}}}{\left( \int_{D} \varphi^{p+1}(x) dx \right)^{\frac{q-p}{p}}} \geq  \frac{2b^{q-p} k \left(\int_{D} \varphi^{\frac{q}{q-p}}(x) dx \right)^{\frac{q-p}{p}}}{\left(\int_{D} v^{p}(t,x)\varphi(x) dx \right)^{\frac{q-p}{p}}} \nonumber\\ &\Rightarrow
    ke^{(p-q) \eta B^{H}(t)}\left(\int_{D} v^{p}(t,x)\varphi(x) dx \right)^{\frac{p-q}{p}} \leq \frac{1}{2}\left(\int_{D}\varphi^{\frac{q}{q-p}}(x) dx \right)^{\frac{p-q}{p}} \nonumber\\
   &\Rightarrow \left(\int_{D}\varphi^{\frac{q}{q-p}}(x) dx \right)^{\frac{p-q}{p}}-ke^{(p-q) \eta B^{H}(t)}\left(\int_{D} v^{p}(t,x)\varphi(x) dx \right)^{\frac{p-q}{p}} \geq \frac{1}{2}\left(\int_{D}\varphi^{\frac{q}{q-p}}(x) dx \right)^{\frac{p-q}{p}}. \nonumber
    \end{align} 
    Therefore from \eqref{e6}, we deduce
    \begin{align}
    J{'}(t)& \geq \left(-\lambda_{1}+\gamma\right)J(t)+\frac{1}{2}e^{\left(q-1\right)\eta B^{H}(t)}\left( \int_{D}v^{p}(t,x)\varphi(x)dx \right)^{\frac{q}{p}}\left(\int_{D}\varphi^{\frac{q}{q-p}}(x) \right)^{\frac{p-q}{p}}\nonumber\\
    &\quad+\delta e^{(m+n-1) \eta B^{H}(t)}J^{m+n}(t)\left(\int_{D}\varphi ^{\frac{n}{n-1}}(x)dx\right)^{1-n}. \nonumber 
    \end{align}
    By using Jensen's inequality (cf. \eqref{JE}), we obtain 
    \begin{align}\label{e7}
    J{'}(t)& \geq \left(-\lambda_{1}+\gamma\right)J(t)+\frac{1}{2}e^{\left(q-1\right) \eta B^{H}(t)}J^{q}(t)\left(\int_{D}\varphi^{\frac{q}{q-p}}(x) \right)^{\frac{p-q}{p}}\nonumber\\
    &\quad+\delta e^{(m+n-1) \eta B^{H}(t)}J^{m+n}(t)\left(\int_{D}\varphi ^{\frac{n}{n-1}}(x)dx\right)^{1-n}.  
    \end{align}
    \textbf{Case 1:} If $m+n=q=\mu\ (say),$ then we have 
    \begin{align}\label{511}
    J{'}(t)& \geq \left(-\lambda_{1}+\gamma\right)J(t)+\frac{1}{2}e^{\left(\mu-1\right) \eta B^{H}(t)}J^{\mu}(t)\left(\int_{D}\varphi^{\frac{\mu}{\mu-p}}(x) \right)^{\frac{p-\mu}{p}}\nonumber\\
    &\quad+\delta e^{(\mu-1) \eta B^{H}(t)}J^{\mu}(t)\left(\int_{D}\varphi ^{\frac{n}{n-1}}(x)dx\right)^{1-n} \nonumber\\
    & = \left(-\lambda_{1}+\gamma\right)J(t)+\mathit{\widetilde{N}}e^{\left(\mu-1\right) \eta B^{H}(t)}J^{\mu}(t), 
    \end{align}
    where $\mathit{\widetilde{N}}=\frac{1}{2} \left(\displaystyle\int_{D}\varphi^{\frac{\mu}{\mu-p}}(x) \right)^{\frac{p-\mu}{p}}+\delta\left(\displaystyle\int_{D}\varphi ^{\frac{n}{n-1}}(x)dx\right)^{1-n}.$
    Here, by a comparison argument (see Theorem 1.3 of \cite{teschl} and Appendix \ref{app} below), we have $J(t) \geq I(t)$ for all $t \geq 0$, where $I(t)$ solves the differential equation
    \begin{align}
    \frac{d I(t)}{dt} = \left(-\lambda_{1}+\gamma\right)I(t)+\mathit{\widetilde{N}}e^{\left(\mu-1\right) \eta B^{H}(t)}I^{\mu}(t),\ I(0)=J(0). \nonumber
    \end{align}
    Solving the above equation, we have
    \begin{align}
    I(t)=e^{(-\lambda_{1}+\gamma) t}\left\lbrace I^{1-\mu}(0)-\mathit{\widetilde{N}}(\mu-1)\int_{0}^{t} e^{ \eta (\mu-1)B^{H}(t)+(-\lambda_{1}+\gamma)(\mu-1)s}ds \right\rbrace^{-\frac{1}{\mu-1}}. \nonumber 
    \end{align}
    For the above equation, the blow-up time is given by \eqref{ST1}. 
 \vskip 0.2cm 
    \noindent\textbf{Case 2:} If $m+n>q$, then we infer
    \begin{align}\label{b3}
    J'(t)& \geq \left(-\lambda_{1}+\gamma\right)J(t)+\left( e^{\left(q-1\right) \eta B^{H}(t)} \wedge e^{(m+n-1)\eta B^{H}(t)}\right)\Bigg[ \frac{1}{2} J^{q}(t)\left(\int_{D}\varphi^{\frac{q}{q-p}}(x) \right)^{\frac{p-q}{p}}\nonumber\\
    &\quad+\delta J^{m+n}(t)\left(\int_{D}\varphi ^{\frac{n}{n-1}}(x)dx\right)^{1-n} \Bigg].
    \end{align}
    The Young inequality states that if $1<b_{1}<\infty$, $\delta>0$ and $a_{0}=\frac{b_{1}}{b_{1}-1}$, then 
    \begin{eqnarray} \label{na10}
    xy\leq \frac{\delta^{a_{0}}x^{a_{0}}}{a_{0}}+\frac{\delta^{-b_{1}}y^{b_{1}}}{b_{1}}, \ x,y\geq 0. 
    \end{eqnarray}
    By setting $b_{1}=\frac{m+n}{q}, \ y=J^{q}(t), \ x=\epsilon, \ \delta=\left( \frac{m+n}{q} \right)^{\frac{q}{m+n}}$ and using the fact that $q<m+n$ in (\ref{na10}), it follows that for any $\epsilon>0,$
    \begin{align}
    \epsilon J^{q}(t) &\leq \frac{\left(\left(\frac{m+n}{q} \right)^{\frac{q}{m+n}}  \right)^{\frac{m+n}{m+n-q}}\epsilon^{\frac{m+n}{m+n-q}}}{\frac{m+n}{m+n-q}} + \frac{\left( \left(\frac{m+n}{q} \right)^{\frac{q}{m+n}}  \right)^{-\frac{m+n}{q}}\left(J^{q}(t) \right)^{\frac{m+n}{q}}  }{\frac{m+n}{q}}  \nonumber \\
    \Rightarrow J^{m+n}(t) &\geq \epsilon J^{q}(t) - A_{0} \epsilon^{\frac{m+n}{m+n-q}}.\nonumber 
    \end{align}
    Therefore from \eqref{b3}, we have
    \begin{align}
    J'(t)& \geq \left(-\lambda_{1}+\gamma\right)J(t)+\left( e^{\left(q-1\right) \eta B^{H}(t)} \wedge e^{(m+n-1)\eta B^{H}(t)}\right)\Bigg[ \frac{1}{2} J^{q}(t)\left(\int_{D}\varphi^{\frac{q}{q-p}}(x) \right)^{\frac{p-q}{p}}\nonumber\\
    &\quad+\delta \left(\epsilon_{0} J^{q}(t) - A_{0} \epsilon_{0}^{\frac{m+n}{m+n-q}} \right) \left(\int_{D}\varphi ^{\frac{n}{n-1}}(x)dx\right)^{1-n} \Bigg]\nonumber\\
    &\geq \left(-\lambda_{1}+\gamma\right)J(t)+\left( e^{\left(q-1\right) \eta B^{H}(t)} \wedge e^{(m+n-1)\eta B^{H}(t)}\right)J^{q}(t)\Bigg[ \frac{1}{2} \left(\int_{D}\varphi^{\frac{q}{q-p}}(x) \right)^{\frac{p-q}{p}}\nonumber\\
    &\quad+\delta \left(\epsilon_{0} - \frac{A_{0} \epsilon_{0}^{\frac{m+n}{m+n-q}}}{J^{q}(0)} \right) \left(\int_{D}\varphi ^{\frac{n}{n-1}}(x)dx\right)^{1-n} \Bigg].\nonumber  
    \end{align}
    Here, by a comparison argument (see Theorem 1.3 of \cite{teschl} and Appendix), we have $J(t) \geq I(t)$ for all $t \geq 0$, where $I(t)$ solves the differential equation
    \begin{align}
    \frac{d I(t)}{dt} &= \left(-\lambda_{1}+\gamma\right)J(t)+\left( e^{\left(q-1\right) \eta B^{H}(t)} \wedge e^{(m+n-1)\eta B^{H}(t)}\right)J^{q}(t)\Bigg[ \frac{1}{2} \left(\int_{D}\varphi^{\frac{q}{q-p}}(x) \right)^{\frac{p-q}{p}}\nonumber\\
    &\quad+\delta \left(\epsilon_{0} - \frac{A_{0} \epsilon_{0}^{\frac{m+n}{m+n-q}}}{J^{q}(0)} \right) \left(\int_{D}\varphi ^{\frac{n}{n-1}}(x)dx\right)^{1-n} \Bigg],\
    I(0)=J(0). \nonumber   
    \end{align}
    Solving the above equation, we have
    \begin{align}
    I(t)&=e^{(q-1)(-\lambda_{1}+\gamma)t}\Bigg\{ I^{1-q}(0)-(q-1)(-\lambda_{1}+\gamma)D\nonumber\\
    &\qquad\times \int_{0}^{t} \left( e^{\left(q-1\right) \eta B^{H}(s)} \wedge e^{(m+n-1)\eta B^{H}(s)}\right)e^{-(q-1)(-\lambda_{1}+\gamma)s} ds  \Bigg\}^{-\frac{1}{p-1}},
    \end{align}
    where $ D=\displaystyle\frac{1}{2} \left(\int_{D}\varphi^{\frac{q}{q-p}}(x) \right)^{\frac{p-q}{p}}+\delta \left(\epsilon_{0} - \frac{A_{0} \epsilon_{0}^{\frac{m+n}{m+n-q}}}{J^{q}(0)} \right) \left(\int_{D}\varphi ^{\frac{n}{n-1}}(x)dx\right)^{1-n}.$
    For the above inequality, the blow-up time is given by \eqref{ST2},
    which completes the proof of the theorem.
    \end{proof}
    
    \subsection{Probability of blow-up when $\frac{1}{2}<H<1$}\label{prb1}
    In this section, we estimate the blow-up probability of the solution to \eqref{b1} for  $\frac{1}{2}<H<1$. 
    
    First let us recall some basic concepts of Malliavin calculus  (for more details see \cite{dung, nualart}, etc.). 
        Let $W=\{W(t)\}_{t\geq 0}$ be defined on a complete probability space $\left( \Omega, \mathcal{F}, (\mathcal{F}_{t})_{t \geq 0} , \mathbb{P} \right).$ For $h \in \mathrm{L}^{2}(\mathbb{R^{+}}),$  the Wiener integral $\mathcal W(h)$ is denoted by $$\mathcal W(h)=\int_{0}^{\infty} h(s) dW(s).$$ 
    Let $\mathcal{S}$ denote the dense subset of $\mathrm{L}^{2}(\Omega, \mathcal{F},\mathbb{P})$ consisting of smooth random variables of the form 
    \begin{align}\label{p1}
    F=f(\mathcal W(h_{1}),\mathcal W(h_{2}),\ldots, \mathcal W(h_{n})), 
    \end{align}
    where $n\in \mathbb{N},\ f \in \mathrm{C}_{0}^{\infty}(\mathbb R^n)$ and $h_{1},h_{2},\ldots, h_n \in \mathrm{L}^{2}(\mathbb{R}^{+}).$
    If $F$ is of the form \eqref{p1},  we define its Malliavin derivative as the process  $ DF:=\{D_{t}F\}_{t \geq 0}$  given by 
    \begin{align*}
    D_{t}F = \sum_{k=1}^{n} \frac{\partial f}{\partial x_{k}} \left( \mathcal W(h_{1}),\mathcal W(h_{2}),..., \mathcal W(h_{n}) \right) h_{k}(t). 
    \end{align*}
    For any $1\leq p< \infty,$ we shall denote by $\mathbb{D}^{1,p}$, the closure of $\mathcal{S}$ with respect to the norm 
    \begin{align*}
    \|F\|_{1,p}^{p} := \mathbb{E}\left[| F |^{p}\right]+ \mathbb{E}\left[ \int_{0}^{\infty}|\mathrm{D}_{t}F|^{p}dt\right]. 
    \end{align*}   
    A random variable $F$ is said to be \emph{Malliavin differentiable} if it belongs to $\mathbb{D}^{1,2}.$ 
    
    We establish an upper bound for probabilities of the form
    \begin{align}
    \mathbb{P}\left[ \int_{0}^{\infty} \exp\{ -as+\sigma X_{s} \} ds <x \right], \nonumber 
    \end{align}
    where $x>0,\ \sigma>0$ and $a$ are real numbers, $\{X_s\}_{s\geq 0}$ is a continuous stochastic processes, which also contains fBm $B^{H}$.
    \begin{assumption} \label{as2}
    The stochastic process $\{X_{t}\}_{t \geq 0}$ is $\mathcal{F}_t$-adapted and satisfies the following properties:   
    \end{assumption}
    \begin{itemize}	
    \item [1.] $\displaystyle\int_{0}^{\infty} \exp\{-as\}\mathbb{E}\left[ \exp\{\sigma X_{s}\}\right] ds < +\infty.$
    \item [2.] For each $t \geq 0, \ X_{t} \in \mathbb{D}^{1,2}.$
    \item [3.] There exists a function $f : \mathbb{R}^{+} \rightarrow \mathbb{R}^{+}$ such that $\displaystyle\lim_{t \rightarrow \infty} f(t)= +\infty$ and for each $x>0,$
    \begin{align*}
    \sup_{t\geq 0} \frac{\displaystyle\sup_{s\in [0,t]} \displaystyle\int_{0}^{s} |D_{\theta}X_{s}|^{2} d\theta}{\left( \ln \left( x+1\right)+f(t) \right)^{2}} \leq M_{x}< +\infty \ \ \mathbb{P}\text{-a.s.}
    \end{align*}
    \end{itemize}

    \begin{theorem}\cite[Theorem 3.1]{dung}\label{mt6} 
    Suppose that Assumption  \ref{as2} holds. We have 
    \begin{align}\label{m7}
    \mathbb{P} \left[ \int_{0}^{\infty} \exp\{-as+\sigma X_{s} \} ds < x \right] \leq \exp \left\lbrace -\frac{(m_{x} -1)^{2}}{2 \sigma^{2}M_{x}}\right\rbrace,
    \end{align}
    where 
    \begin{align*}
    m_{x}=\mathbb{E}\left[\sup_{t \geq 0} \frac{\ln \left(  \int_{0}^{t} \exp\{-as+\sigma X_{s} \} ds+1 \right) +f(t)}{\ln \left( x+1\right)+f(t)}\right]\geq 1.
    \end{align*}
    \end{theorem}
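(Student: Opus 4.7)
The plan is to reduce the claim to the Gaussian-type concentration inequality for Malliavin differentiable random variables (the lemma stated right before this theorem), applied to the centered random variable associated with $\xi:=\sup_{t\ge 0}G_t$, where $G_t:=\frac{\ln(Y_t+1)+f(t)}{\ln(x+1)+f(t)}$ and $Y_t:=\int_0^t\exp\{-as+\sigma X_s\}\,ds$. Two basic observations I would record first are: since $f(t)\to\infty$ and $Y_\infty<\infty$ a.s.\ (by item~(1) of Assumption~\ref{as2}), $G_t\to 1$ as $t\to\infty$, so $\xi\ge 1$ a.s.\ and $m_x=\mathbb{E}[\xi]\ge 1$; and on the event $\{Y_\infty<x\}$ one has $Y_t<x$ for every $t$, hence $G_t<1$ for every $t$, so
$$\{Y_\infty<x\}\subset\{\xi\le 1\}=\{m_x-\xi\ge m_x-1\}.$$
Thus the proof will be complete once I establish a Gaussian tail bound of variance parameter $\sigma^2 M_x$ for the centered random variable $Z:=m_x-\xi$.

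The crux is the Malliavin derivative bound. For fixed $t$, the chain rule yields $D_\theta G_t=\frac{D_\theta Y_t}{(Y_t+1)(\ln(x+1)+f(t))}$ with $D_\theta Y_t=\sigma\int_0^t e^{-as+\sigma X_s}D_\theta X_s\,ds$. Using adaptedness ($D_\theta X_s=0$ for $\theta>s$), Cauchy--Schwarz in the $s$-variable against the measure $e^{-as+\sigma X_s}\,ds$, and Fubini, I would obtain
$$\int_0^\infty|D_\theta Y_t|^2\,d\theta\le\sigma^2\,Y_t\int_0^t e^{-as+\sigma X_s}\left(\int_0^s|D_\theta X_s|^2\,d\theta\right)ds\le\sigma^2\,Y_t^2\sup_{s\le t}\int_0^s|D_\theta X_s|^2\,d\theta,$$
and after dividing by $(Y_t+1)^2(\ln(x+1)+f(t))^2$ and invoking item~(3) of Assumption~\ref{as2}, the uniform bound $\int_0^\infty|D_\theta G_t|^2\,d\theta\le\sigma^2 M_x$ follows. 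The last step transfers this bound from $G_t$ to $\xi$: when the supremum is attained at a (random) $t^\star$, the standard identity $D_\theta\xi=D_\theta G_{t^\star}$ gives $\int_0^\infty|D_\theta\xi|^2\,d\theta\le\sigma^2 M_x$ a.s.

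Having established $\int_0^\infty(D_\theta Z)^2\,d\theta\le\sigma^2 M_x$ a.s.\ for the centered $Z=m_x-\xi$, the one-sided version of the Malliavin concentration inequality gives
$$\mathbb{P}(Z\ge m_x-1)\le\exp\left\{-\frac{(m_x-1)^2}{2\sigma^2 M_x}\right\},$$
which combined with the inclusion above is precisely \eqref{m7}. The main obstacle I anticipate is the rigorous justification of the Malliavin derivative of the supremum: one must show $\xi\in\mathbb{D}^{1,2}$ and validate the identity $D_\theta\xi=D_\theta G_{t^\star}$ when the supremum is over the whole half-line $[0,\infty)$. The standard route I would follow is to approximate $\xi$ by $\xi_N:=\sup_{t\in F_N}G_t$ over a countable increasing family $F_N$ dense in $[0,\infty)$, use the finite-max rule (chain rule combined with a measurable selection of the argmax) to control each $\int_0^\infty|D_\theta\xi_N|^2\,d\theta$ by $\sigma^2 M_x$, and then pass to the limit in $\mathbb{D}^{1,2}$ using the pathwise continuity of $t\mapsto G_t$ (inherited from that of $X$) together with the uniform Malliavin bound.
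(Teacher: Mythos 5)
First, a point of reference: the paper does not prove this theorem --- it is imported verbatim from \cite[Theorem 3.1]{dung} --- so there is no in-paper proof to measure your attempt against; I can only judge it on its own terms and against the cited source, whose argument it essentially reproduces. Your reduction is sound: Assumption \ref{as2}(1) and Tonelli give $\mathbb{E}[Y_\infty]<\infty$, hence $Y_\infty<\infty$ a.s., hence $G_t\to1$ and $\xi:=\sup_{t\ge0}G_t\ge1$, so $m_x=\mathbb{E}[\xi]\ge1$; monotonicity of $t\mapsto Y_t$ gives $\{Y_\infty<x\}\subset\{\xi\le1\}=\{m_x-\xi\ge m_x-1\}$; and your Cauchy--Schwarz/Fubini computation, using $D_\theta X_s=0$ for $\theta>s$, $Y_t/(Y_t+1)\le1$ and Assumption \ref{as2}(3), correctly produces the uniform bound $\int_0^\infty|D_\theta G_t|^2\,d\theta\le\sigma^2M_x$. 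Three points should be made explicit. (i) You need $\xi\in L^2$ so that $m_x<\infty$ and $Z=m_x-\xi$ is an admissible centered variable; this follows from $1\le\xi\le\max\{1,\ln(Y_\infty+1)/\ln(x+1)\}$ together with $\mathbb{E}[Y_\infty]<\infty$. (ii) The tail lemma as quoted in the paper is two-sided with a factor $2$, whereas \eqref{m7} has no such factor, so you must invoke the one-sided variant $\mathbb{P}(Z\ge r)\le e^{-r^2/(2M_0^2)}$; this does follow from the same exponential-moment estimate, but it should be said. (iii) The supremum defining $\xi$ need not be attained (it may equal the limit value $1$), so the argmax identity $D_\theta\xi=D_\theta G_{t^\star}$ cannot be used as stated; however, the fallback you describe --- countable dense suprema $\xi_N$, the finite-max rule giving $\int_0^\infty|D_\theta\xi_N|^2\,d\theta\le\sigma^2M_x$, then the uniform $\mathbb{D}^{1,2}$ bound plus $L^2$-convergence (via path continuity of $t\mapsto G_t$) to conclude $\xi\in\mathbb{D}^{1,2}$ with the same bound --- is the correct and standard route and does not require an argmax at all. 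With these three items written out, your proof is complete and is, in substance, the argument of the cited source.
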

     The following theorem is useful to obtain a lower bound for the probability of blow-up solution of the equation \eqref{b1} with $\frac{1}{2}<H<1.$

     \begin{theorem}
     Suppose that $\alpha > H$ with $q > p>1, m+n = q$. For each positive initial value $f(x) \geq b\varphi(x),$ $x\in D$ and for each $b>1$ such that \eqref{B1} holds,  and the blow-up times $\tau^{\ast}_{1}$ and $\tau^{\ast}_{2}$ of \eqref{b1} as defined in Theorem \ref{thm5.1} with $J(0)=\int_{D}f(x) \varphi(x)dx,$ we have the following results:
     \begin{itemize}
     \item[1.] If $\lambda_{1}<\gamma,$ then $\mathbb{P}(\tau^{\ast}_{1}= \infty)=0$ for any non-trivial positive solution $u(\cdot,\cdot)$ of the equation \eqref{b1}, that is the solution  $u(\cdot,\cdot)$  blows-up in finite-time $\mathbb{P}\text{-a.s. }$
     \item[2.] 
     If $\lambda_{1} > \gamma$ and $m+n=q=\mu\ (say)$, then a lower bound for the probability of blow-up solution of \eqref{b1} is given by 
    
   \begin{align}
   &\mathbb{P} (\tau < \infty) \nonumber\\&\geq 1-\exp \Bigg\{ -\frac{1}{2 \rho^{2}_{1}} \left(\log \left( \frac{1}{2} \left(\int_{D}\varphi^{\frac{\mu}{\mu-p}}(x)dx \right)^{\frac{p-\mu}{p}}+\delta\left(\int_{D}\varphi ^{\frac{n}{n-1}}(x)dx\right)^{1-n}+1\right)  \right)^{\frac{2H}{\alpha}-2} \nonumber\\
   &\qquad\qquad\times \left( \frac{\alpha-H}{\alpha} \right)^{2-\frac{2H}{\alpha}} \left(N(H)-1 \right)^{2}  \Bigg\}, \nonumber 
   \end{align}
    where    $ \rho_{1}=\eta (\mu-1)$ and 
   \begin{align}\label{516}
   N(H):= \mathbb{E}\left[\sup_{t \geq 0} \frac{\ln \left( \displaystyle 1+\int_{0}^{t} \exp\{(-\lambda_{1}+\gamma)(\mu-1)s-\frac{\rho^{2}_{1}}{2}s^{2H}+\rho_{1} B^{H}(s) \} ds \right) +t^{\alpha}}{\ln \left(\frac{1}{2} \left(\int_{D}\varphi^{\frac{\mu}{\mu-p}}(x)dx \right)^{\frac{p-\mu}{p}}+\delta\left(\int_{D}\varphi ^{\frac{n}{n-1}}(x)dx\right)^{1-n}+1\right)+t^{\alpha}}\right].
   \end{align}
     \end{itemize}
     \end{theorem}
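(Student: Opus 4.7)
The plan is first to reduce everything to the finite-time blow-up of $\tau_1^\ast$ from Theorem \ref{thm5.1}, which gives $\tau \leq \tau_1^\ast$ so that $\mathbb{P}(\tau < \infty) \geq 1 - \mathbb{P}(\tau_1^\ast = \infty)$. From \eqref{ST1}, $\tau_1^\ast = \infty$ is equivalent to $\int_{0}^{\infty} \exp\{\rho_1 B^H(s) + (-\lambda_1+\gamma)(\mu-1)s\}\,ds < C$, where $\rho_1 = \eta(\mu-1)$ and $C$ is the positive threshold determined by $J(0)$ and $\widetilde{N}$.

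For Part~1, the assumption $\lambda_1 < \gamma$ makes the deterministic drift $(-\lambda_1+\gamma)(\mu-1)$ strictly positive. Factoring $\rho_1$ out of the exponent and applying Lemma \ref{l2} with $\nu = (\lambda_1-\gamma)/\eta < 0$ (together with the fact that multiplying the integrand by a positive constant does not affect almost-sure divergence), the integral above is $+\infty$ almost surely, so $\mathbb{P}(\tau_1^\ast = \infty) = 0$, which forces $\tau < \infty$ almost surely and proves part~1.

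For Part~2, set $a := (\lambda_1 - \gamma)(\mu-1) > 0$. The crucial first step is a Girsanov-type trick: since $\exp\{-\rho_1^2 s^{2H}/2\} \leq 1$, the integral defining $\tau_1^\ast$ dominates the ``normalized'' one $\int_{0}^{\infty} \exp\{-as + \rho_1 B^H(s) - \rho_1^2 s^{2H}/2\}\,ds$, so $\mathbb{P}(\tau_1^\ast = \infty)$ is bounded above by the probability that this normalized integral falls below the same threshold. I then apply Theorem \ref{mt6} with $X_s = B^H(s) - \rho_1 s^{2H}/2$ and $\sigma = \rho_1$. Condition~(1) of Assumption \ref{as2} is immediate from $\mathbb{E}[\exp\{\rho_1 B^H(s) - \rho_1^2 s^{2H}/2\}] = 1$, giving $\int_{0}^{\infty} e^{-as}\,\mathbb{E}[e^{\sigma X_s}]\,ds = 1/a < \infty$. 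Condition~(2) follows because $B^H(s)$ is a centered Gaussian in $\mathbb{D}^{1,2}$ and the added drift is deterministic. For Condition~(3), the Volterra representation yields $D_\theta X_s = K_H(s,\theta)\mathbf{1}_{[0,s]}(\theta)$ and the classical identity $\int_{0}^{s} K_H(s,\theta)^2\,d\theta = s^{2H}$; with $f(t) = t^\alpha$ and $\alpha > H$, the map $t \mapsto t^{2H}/(\ln(x+1)+t^\alpha)^2$ has a unique interior maximum at $t^\alpha = H\ln(x+1)/(\alpha-H)$, giving a finite constant $M_x$ explicit in $H$, $\alpha$ and $\ln(x+1)$. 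Plugging the resulting $M_x$ and $m_x = N(H)$ into \eqref{m7} and simplifying yields the announced exponential bound.

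The main technical obstacle is this Girsanov-type correction: without subtracting $\rho_1^2 s^{2H}/2$, the moment $\mathbb{E}[e^{\rho_1 B^H(s)}] = e^{\rho_1^2 s^{2H}/2}$ grows super-linearly in $s$ for $H \in (1/2,1)$, violating Condition~(1) of Assumption \ref{as2} however large $a$ is. The hypothesis $\alpha > H$ is precisely what makes the supremum in Condition~(3) finite, and the optimization in $t$ produces the explicit factor $((\alpha - H)/\alpha)^{2-2H/\alpha}$ appearing in the final bound.
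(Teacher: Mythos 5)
Your proposal is correct and follows essentially the same route as the paper: both parts reduce to estimating $\mathbb{P}(\tau^{\ast}_{1}=\infty)$, Part 1 via Lemma \ref{l2}, and Part 2 via the pointwise bound $e^{-\rho_{1}^{2}s^{2H}/2}\leq 1$ followed by Theorem \ref{mt6} applied to $X_{s}=B^{H}(s)-\frac{\rho_{1}}{2}s^{2H}$ with $\sigma=\rho_{1}$, $f(t)=t^{\alpha}$, the Volterra-kernel identity $\int_{0}^{s}K_{H}^{2}(s,\theta)\,d\theta=s^{2H}$, and the same optimization in $t$. The only soft spot is the parenthetical justification in Part 1 --- $\rho_{1}$ multiplies the exponent, not the integrand, so it is not a constant factor --- but the conclusion still holds since $B^{H}(s)-\nu s\to+\infty$ a.s.\ for $\nu<0$, so this does not affect correctness.
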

     \begin{proof}
     \textbf{Case 1:} Assume that $\lambda_{1}<\gamma,$ and recall the law of  iterated logarithm for $B^{H}$ which is given by (see more details in \cite{law})
     \begin{align}
     \liminf_{t \rightarrow \infty} \frac{B^{H}(t)}{t^{H}\sqrt{2 \log \log t}}=-1 \ \ \mbox{and}\ \ \limsup_{t \rightarrow \infty} \frac{B^{H}(t)}{t^{H}\sqrt{2 \log \log t}}=1, \ \ \mathbb{P}\text{- a.s.} \nonumber 
     \end{align} 
     If $m+n=q,$ then by Lemma \ref{l2} and \eqref{ST1}, we have $\mathbb{P}(\tau^{\ast}_{1}= \infty)=0$, 
      and any non-trivial positive solution of the equation \eqref{b1} blows-up in finite-time $\mathbb{P}\text{-a.s. }$\\
          	
  \noindent   \textbf{Case 2 :} If $m+n=q=\mu\ (say)$ then by using the definition of $\tau^{\ast}_{1}$ in \eqref{ST1}, we have      
     \begin{align}
     &\mathbb{P}\left( \tau^{\ast}_{1}=\infty \right)\nonumber\\
     &=\mathbb{P} \Bigg( \int_{0}^{\infty} e^{\rho_{1} B^{H}(s)+(-\lambda_{1}+\gamma)(\mu-1)s}ds < \frac{J^{1-\mu}(0)}{(\mu-1)\left(\frac{1}{2} \left(\int_{D}\varphi^{\frac{q}{q-p}}(x)dx \right)^{\frac{p-q}{p}}+\delta \left(\int_{D}\varphi ^{\frac{n}{n-1}}(x)dx\right)^{1-n}\right)}  \Bigg) \nonumber\\
     & \leq \mathbb{P} \Bigg( \int_{0}^{\infty} e^{\rho_{1} B^{H}(s)-\frac{\rho_{1}^{2}}{2}s^{2H}+(-\lambda_{1}+\gamma)(\mu-1)s}ds < \frac{J^{1-\mu}(0)}{(\mu-1)\left(\frac{1}{2} \left(\int_{D}\varphi^{\frac{q}{q-p}}(x)dx \right)^{\frac{p-q}{p}}+\delta \left(\int_{D}\varphi ^{\frac{n}{n-1}}(x)dx\right)^{1-n}\right)}  \Bigg). \nonumber 
     \end{align} 
     We choose $X_{t}=-\frac{\rho_{1}}{2}t^{2H}+B^{H}(t),\ \rho_{1}=\eta(\mu-1).$ First we show that the stochastic process $\{X_{t}\}_{t\geq 0}$ satisfies the conditions of Theorem \ref{mt6}. Consider
     \begin{align}
     \int_{0}^{\infty} \mathbb{E}\left(  e^{(-\lambda_{1}+\gamma)(\mu-1)s-\frac{\rho^{2}_{1}}{2}s^{2H}+\rho_{1} B^{H}(s)}\right) ds&=\int_{0}^{\infty}  e^{(-\lambda_{1}+\gamma)(\beta-1)s-\frac{\rho^{2}_{1}}{2}s^{2H}}\mathbb{E}\left( e^{\rho_{1}B^{H}(s)}\right) ds \nonumber\\
     &=\int_{0}^{\infty}  e^{(-\lambda_{1}+\gamma)(\mu-1)s} ds=\frac{1}{(-\lambda_{1}+\gamma)(\mu-1)}< \infty,
     \end{align}
     and 
     \begin{align}
     D_{\theta} X_{t}= D_{\theta}\left(-\frac{\rho_{1}}{2}t^{2H}+B^{H}(t) \right) = D_{\theta} B^{H}(t)= K_{H}(t,\theta), \ \theta \leq t.
     \end{align}
     Moreover, we have
     \begin{align}\label{520}
     \sup_{s \in [0,t]}\int_{0}^{s} \left|D_{\theta} X_{s} \right|^{2} d \theta&=\sup_{s \in [0,t]} \int_{0}^{s} K^{2}_{H}(s,\theta) d \theta = \sup_{s \in [0,t]} \mathbb{E}\left|B^{H}(s) \right|^{2} =t^{2H}.  
     \end{align}
     Now we take $f(t)=t^{\alpha}$ with $\alpha>H.$ Therefore the stochastic process $\{X_{t}\}_{t\geq 0}$ satisfies the conditions of Theorem \ref{mt6}. Moreover,
     \begin{align}
     M_{H}:= \sup_{t \geq 0} t^{2H} \left\{\log \left(\frac{J^{1-\mu}(0)}{(\mu-1)\left(\frac{1}{2} \left(\int_{D}\varphi^{\frac{\mu}{\mu-p}}(x)dx \right)^{\frac{p-\mu}{p}}+\delta\left(\int_{D}\varphi ^{\frac{n}{n-1}}(x)dx\right)^{1-n}\right)}+1\right)  +t^{\alpha} \right\}^{-2}, \nonumber
     \end{align}
     and by a simple calculation, we get 
     \begin{align}
     M_{H}=\left( \frac{\alpha-H}{\alpha} \right)^{2-\frac{2H}{\alpha}}\left(\log \left( \frac{1}{2} \left(\int_{D}\varphi^{\frac{\mu}{\mu-p}}(x)dx \right)^{\frac{p-\mu}{p}}+\delta \left(\int_{D}\varphi ^{\frac{n}{n-1}}(x)dx\right)^{1-n}+1\right)  \right)^{\frac{2H}{\alpha}-2} . \nonumber  
     \end{align}
     Then by Theorem \ref{mt6}, we have 
     \begin{align}
     &\mathbb{P} \Bigg(\int_{0}^{\infty} e^{\rho_{1} B^{H}(s)-\frac{\rho^{2}_{1}}{2}s^{2H}+(-\lambda_{1}+\gamma)(\mu-1)s}ds < \frac{J^{1-\mu}(0)}{(\mu-1)\left(\frac{1}{2} \left(\int_{D}\varphi^{\frac{\mu}{\mu-p}}(x)dx \right)^{\frac{p-\mu}{p}}+\delta \left(\int_{D}\varphi ^{\frac{n}{n-1}}(x)dx\right)^{1-n}\right)} \Bigg) \nonumber\\
     &\leq \exp \Bigg\{ -\frac{1}{2 \rho^{2}_{1}} \left(\log \left( \frac{1}{2} \left(\int_{D}\varphi^{\frac{\mu}{\mu-p}}(x)dx \right)^{\frac{p-\mu}{p}}+\delta\left(\int_{D}\varphi ^{\frac{n}{n-1}}(x)dx\right)^{1-n}+1\right)  \right)^{\frac{2H}{\alpha}-2}\nonumber\\
     &\qquad\qquad\times \left( \frac{\alpha-H}{\alpha} \right)^{2-\frac{2H}{\alpha}} \left(N(H)-1 \right)^{2}  \Bigg\}. \nonumber 
     \end{align}
     Therefore, we obtain
     \begin{align}
     &\mathbb{P} (\tau < \infty)=1-\mathbb{P}(\tau = \infty)\nonumber\\
     & \geq 1-\exp \Bigg\{ -\frac{1}{2 \rho^{2}_{1}} \left(\log \left( \frac{1}{2} \left(\int_{D}\varphi^{\frac{\mu}{\mu-p}}(x)dx \right)^{\frac{p-\mu}{p}}+\delta\left(\int_{D}\varphi ^{\frac{n}{n-1}}(x)dx\right)^{1-n}+1\right)  \right)^{\frac{2H}{\alpha}-2} \nonumber\\
     &\qquad\qquad\times\left( \frac{\alpha-H}{\alpha} \right)^{2-\frac{2H}{\alpha}}  \left(N(H)-1 \right)^{2}  \Bigg\}, \nonumber 
     \end{align}
     where $\rho_{1}=\eta(\mu-1)$ and $N(H)$ is defined in \eqref{516}, which completes the proof of theorem. 
    \end{proof}
     \begin{Rem} For the case $m+n>q$, we have the blow-up time given by \eqref{ST2}
     and  the probability distribution for estimating the probability of blow-up solution for the case of fractional Brownian motion is not known and it will be a part of our future work.
     \end{Rem}

     \section{For the case $H=\frac{1}{2}$}
     In this section, we consider the case $H=\frac{1}{2}$ and  $B^{\frac{1}{2}}(\cdot)=W(\cdot),$ where $W(\cdot)$ is a standard one-dimensional Brownian motion. We are devoted to establish the lower and upper bounds for the      probability of the blow-up solution to the equation \eqref{b1} by using the methods developed in \cite{dung} and \cite{Eug2017}. By using the random transformation $$v(t,x) = \exp\{-\eta W(t)\}u(t,x),$$ for $t \geq 0,\ x \in D$,  the equation \eqref{b1} is transformed into the following random PDE:
     \begin{equation}\label{ss1}
     \left\{
     \begin{aligned}
     \frac{\partial v(t,x)}{\partial t}&=\left( \Delta+\gamma-\frac{\eta^{2}}{2}\right) v(t,x)+ e^{\left( q-1\right)\eta W(t) }\int_{D}v^{q}(t,x)dx-ke^{\left( p-1\right)\eta W(t) }v^{p}(t,x)\\
     &\qquad +\delta e^{(m+n-1) \eta W(t)}v^{m}(t,x)\int_{D} v^{n}(t,x)dx,\\
     v(t,x)&=0, \ \ x \in D, \\
     v(0,x)&=f(x), \ \  x \in D.  
     \end{aligned}
     \right.
     \end{equation}
     Taking $\Lambda=\frac{ \eta^{2}}{2}-\gamma,$ the results obtained in previous sections are valid by replacing the constant $\gamma$ by $-\Lambda.$ Next, we recall that $X(\alpha,\beta_{1})$ is said to be a \emph{Gamma random variable} with parameters $\alpha>0,\beta_{1}>0$ if its density function $\widetilde{f}_{\alpha,\beta_{1}}(x)$ is given by (cf. \cite{li})
     \begin{equation}
     \widetilde{f}(x) = \left\{
     \begin{aligned}
     &\frac{x^{\alpha-1}}{\beta_{1}^{\alpha}\varGamma (\alpha)} \exp\left\lbrace-\frac{x}{\beta_{1}} \right\rbrace , \ x\geq 0 , \\
     &\hspace{.3 in} 0, \hspace{0.9 in} \ x<0. 
     \end{aligned}
     \right.
     \end{equation}
     The following lemma is very useful to estimate the probability of blow-up of positive solutions $v(\cdot,\cdot)$ of the equation \eqref{ss1}.
    \begin{lemma} (\cite{yor2005}, \cite{revuz1999}, \cite[Chapter 6, Corollary 1.2]{yor2001}){\label{ll1}}
    For any $\alpha>0,$ the exponential functional $\displaystyle\int_{0}^{\infty} e^{2(W(t)-\alpha t)} dt$ 	is distributed as $(2X(\alpha,1))^{-1},$ where $W(\cdot)$ is a one-dimensional standard Brownian motion. 
    \end{lemma}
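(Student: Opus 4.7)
The plan is to identify the exponential functional $I_\alpha := \int_{0}^{\infty} e^{2(W(t)-\alpha t)}\,dt$ with the first hitting time of zero by a Bessel process of dimension $\delta = 2-2\alpha$ started from $1$, via Lamperti's time-change, and then invoke the classical Bessel hitting-time density. This is the Dufresne--Yor identity, so the strategy is to assemble known tools from stochastic calculus rather than to compute anything from scratch.

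First I would introduce the geometric Brownian motion $Y_t := \exp\{W(t)-\alpha t\}$, which by It\^o's formula satisfies
\begin{equation*}
dY_t = Y_t\,dW(t) + \Bigl(\tfrac{1}{2}-\alpha\Bigr) Y_t\,dt,\quad Y_0 = 1.
\end{equation*}
Set $A_t := \int_0^t Y_s^2\,ds$ with inverse $\sigma(u):=\inf\{t\geq 0 : A_t > u\}$, $u<A_\infty$. By the Lamperti time-change theorem (Revuz--Yor, Ch.~XI), the time-changed process $R_u := Y_{\sigma(u)}$ is a Bessel process of dimension $\delta = 2-2\alpha$ starting from $R_0 = 1$: after the time-change the martingale part becomes a standard Brownian motion, and the drift takes exactly the required form $(\delta-1)/(2R_u)\,du$. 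Since $\alpha>0$ forces $\delta<2$, the Bessel process $R$ hits zero in finite time almost surely, and the identification
\begin{equation*}
I_\alpha = A_\infty = T_0(R) := \inf\{u\geq 0 : R_u = 0\}
\end{equation*}
follows because $\{Y_t\to 0\} = \{A_\infty < \infty\}$ and $A$ is strictly increasing up to $A_\infty$.

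Second, I would apply the classical density formula for the first hitting time of $0$ by a Bessel process of dimension $\delta = 2-2\alpha$ (equivalently, index $\nu = -\alpha$) starting from $r=1$:
\begin{equation*}
\mathbb{P}(T_0(R)\in dt) = \frac{1}{2^{\alpha}\Gamma(\alpha)}\, t^{-\alpha-1}\, e^{-\frac{1}{2t}}\,dt,\quad t>0.
\end{equation*}
An elementary change of variables shows this is exactly the law of $(2Z)^{-1}$ with $Z\sim X(\alpha,1)$: if $T = 1/(2Z)$, then $f_T(t) = f_Z(1/(2t))\cdot (2t^2)^{-1} = \frac{1}{2^{\alpha}\Gamma(\alpha)} t^{-\alpha-1} e^{-1/(2t)}$, matching the density above.

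The main obstacle is the Lamperti identification; the remaining steps are bookkeeping. That step is however a standard application of the Dambis--Dubins--Schwarz time-change to the SDE for $Y_t$, and explicit statements with $1$ as starting point are tabulated in Borodin--Salminen. Once the Bessel representation and the hitting-time density are in hand, the Jacobian calculation closes the proof.
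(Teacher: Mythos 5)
Your argument is correct. Note, however, that the paper does not prove this lemma at all: it is quoted verbatim from the literature (Matsumoto--Yor, Revuz--Yor, and Yor's \emph{Exponential functionals} volume), so there is no in-paper proof to compare against. What you have written is essentially the classical derivation of the Dufresne identity that appears in those cited sources: Lamperti's relation turns $Y_t=\exp\{W(t)-\alpha t\}$ into a Bessel process of index $\nu=-\alpha$ (dimension $\delta=2-2\alpha$) run at the clock $A_t=\int_0^t Y_s^2\,ds$, the full exponential functional is identified with $T_0(R)$, and Getoor's hitting-time law $T_0(R)\overset{d}{=}r^2/(2\gamma_\alpha)$ with $r=1$ gives exactly $(2X(\alpha,1))^{-1}$; your Jacobian computation matches the Gamma density $\widetilde f_{\alpha,1}(x)=x^{\alpha-1}e^{-x}/\Gamma(\alpha)$ used in the paper. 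The only point worth flagging is that for $\alpha\geq 1$ the dimension $\delta=2-2\alpha$ is nonpositive, so the Bessel SDE with drift $(\delta-1)/(2R_u)\,du$ only defines the process up to $T_0$; since the identity $A_\infty=T_0(R)$ and Getoor's density are both statements about the process killed at $0$ (and the density formula is valid for every index $\nu<0$), this does not affect the proof, but you should say so explicitly rather than speak of ``the Bessel process of dimension $\delta$'' as a globally defined object.
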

    The following theorem gives  the lower bounds for the finite-time blow-up of the solution $u(\cdot, \cdot)$ of the equation \eqref{ss1}.
	\begin{theorem} \label{thm6.1}
	Assume that $p,q,n>1,$ $m \geq 0$ with $m+n \geq q$. If $f$ is a non-negative bounded function and $|D|>k$, then $\sigma_{\ast}\leq\tau$, where $\sigma_{\ast}$ is given by
	\begin{align}
	\sigma_{\ast} = \inf \Bigg\{ t\geq 0 :\int_{0}^{t}\left( e^{\eta \left( q-1\right)  W(r)} \vee e^{\eta (m+n-1) W(r)}\right)  e^{-\Lambda(m+n-1) r}dr \geq  \frac{1}{2M(m+n-1) \|f\|_{\infty}^{m+n-1}} \Bigg\}, \nonumber
	\end{align}
		where $M = \max\left\lbrace |D|,\delta |D| \right\rbrace.$
	\end{theorem}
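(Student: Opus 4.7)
The plan is to mirror the proof of Theorem \ref{t2} line-for-line, with the substitutions $B^H(\cdot) \to W(\cdot)$ and $\gamma \to -\Lambda$, which are forced by the Itô correction appearing in the random transformation for $H = \tfrac12$. By Theorem \ref{thm2.1} (whose analogue for $H=\tfrac12$ yields \eqref{ss1} via standard Itô calculus), the blow-up time $\tau$ of \eqref{b1} coincides with that of the mild solution $v$ of \eqref{ss1}. Dropping the absorbing term $-ke^{(p-1)\eta W(t)}v^{p}$ from \eqref{ss1} produces a pure-reaction auxiliary problem whose mild solution $w$ dominates $v$ pointwise by Proposition \ref{p2}; hence it suffices to prove that $w$ stays bounded on $[0,\sigma_*)$.

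Next I would set up the Picard iteration $u^{(0)}(t,x) = e^{-\Lambda t} T_t f(x)$ and $u^{(n)} = \mathcal{H}u^{(n-1)}$, where $\mathcal{H}$ is the Duhamel operator associated with the auxiliary problem (identical to the $\mathcal{H}$ of Theorem \ref{t2} but with the prefactors $e^{\gamma(t-r)}$ replaced by $e^{-\Lambda(t-r)}$). Monotonicity of $\mathcal{H}$ in its argument gives that $\{u^{(n)}\}$ is nondecreasing; I would then propagate the ansatz $u^{(n)}(t,x) \leq e^{-\Lambda t} T_t \|f\|_\infty\, \mathscr{G}(t)$ through one application of $\mathcal{H}$, using $\|e^{-\Lambda r} T_r g\|_\infty \leq e^{-\Lambda r}\|g\|_\infty$ and crudely bounding the spatial integrals by $|D|$ and $\delta|D|$, both dominated by $M$. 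The auxiliary scalar $\mathscr{G}$ would be defined as the solution of
\begin{equation*}
\mathscr{G}'(t) = 2M \bigl(e^{(q-1)\eta W(t)} \vee e^{(m+n-1)\eta W(t)}\bigr)\, e^{-\Lambda(m+n-1) t}\|f\|_\infty^{m+n-1}\, \mathscr{G}^{m+n}(t), \qquad \mathscr{G}(0) = 1.
\end{equation*}

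The key sharpening compared with Theorem \ref{t2} — and the reason the denominator in $\sigma_*$ carries $(m+n-1)$ rather than $(m+n+q-1)$ — is the pointwise estimate $\mathscr{G}^{q}(r) + \mathscr{G}^{m+n}(r) \leq 2\mathscr{G}^{m+n}(r)$, which holds because $\mathscr{G} \geq 1$ and $q \leq m+n$. This collapses the two reaction contributions into a single $\mathscr{G}^{m+n}$ term and yields, after integrating the ODE,
\begin{equation*}
\mathscr{G}(t) = \Bigl[1 - 2M(m+n-1)\|f\|_\infty^{m+n-1}\!\!\int_0^t \!\bigl(e^{(q-1)\eta W(r)} \vee e^{(m+n-1)\eta W(r)}\bigr) e^{-\Lambda(m+n-1) r}\, dr\Bigr]^{-\frac{1}{m+n-1}},
\end{equation*}
which is finite precisely on $[0,\sigma_*)$. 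Passing to the limit $n\to\infty$ via monotone convergence identifies the iteration limit with $w$, and the inequality $v \leq w$ then gives $\tau \geq \sigma_*$. The main technical obstacle is justifying the interchange of limit and integral in the monotone passage and confirming that the iterative bound is preserved by one application of $\mathcal{H}$ uniformly on $[0,\sigma_*)$; both are handled exactly as in Theorem \ref{t2} by using the semigroup contractivity of $T_t$ on $L^\infty(D)$ and continuity of $t \mapsto W(t)$.
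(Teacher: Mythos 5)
Your overall route is exactly the paper's: the paper proves Theorem \ref{thm6.1} by a one-line citation of Theorem \ref{t2} with $B^{H}\to W$ and $\gamma\to-\Lambda$, and you reproduce that argument (transformation to \eqref{ss1}, dropping the absorption term, Picard iteration with the scalar majorant $\mathscr{G}$, monotone convergence, comparison via Proposition \ref{p2}). You also correctly notice that a literal transcription of Theorem \ref{t2} does \emph{not} give the stated constant: it yields the threshold $\frac{1}{2M(m+n+q-1)\|f\|_{\infty}^{m+n-1}}$ with integrand $\|e^{-\Lambda r}T_{r}\|_{\infty}^{m+n-1}$; passing to the larger integrand $e^{-\Lambda(m+n-1)r}$ only \emph{decreases} the stopping time, whereas enlarging the denominator constant from $m+n+q-1$ to $m+n-1$ \emph{increases} it, so the two statements are not comparable and an extra argument is genuinely needed. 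Supplying one is the right instinct, and it is more than the paper offers.

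However, the sharpening as you state it does not close. The two reaction contributions produced by one application of $\mathcal{H}$ carry (up to the factor $M$ and the exponential in $W$) the prefactors $\|e^{-\Lambda r}T_{r}\|_{\infty}^{q-1}\|f\|_{\infty}^{q-1}\,\mathscr{G}^{q}(r)$ and $\|e^{-\Lambda r}T_{r}\|_{\infty}^{m+n-1}\|f\|_{\infty}^{m+n-1}\,\mathscr{G}^{m+n}(r)$; your estimate $\mathscr{G}^{q}+\mathscr{G}^{m+n}\le 2\mathscr{G}^{m+n}$ handles only the powers of $\mathscr{G}$, not the mismatched prefactors. To dominate the first term by $e^{-\Lambda(m+n-1)r}\|f\|_{\infty}^{m+n-1}\mathscr{G}^{m+n}(r)$ one needs $\left(e^{-\Lambda r}\|f\|_{\infty}\mathscr{G}(r)\right)^{m+n-q}\ge \|T_{r}\|_{\infty}^{q-1}$, and since the right-hand side is $\le 1$ this amounts to requiring $e^{-\Lambda r}\|f\|_{\infty}\mathscr{G}(r)\ge 1$ whenever $m+n>q$ --- which fails for small $\|f\|_{\infty}$, or for $\Lambda>0$ and $r$ large. (When $m+n=q$ the exponent is zero and the step is harmless.) To be fair, the paper's own proof of Theorem \ref{t2} makes the same silent collapse of $\|e^{\gamma r}T_{r}\|_{\infty}^{q-1}\|f\|_{\infty}^{q-1}$ into $\|e^{\gamma r}T_{r}\|_{\infty}^{m+n-1}\|f\|_{\infty}^{m+n-1}$, so the gap is inherited rather than introduced; but to legitimately obtain the constant $m+n-1$ in $\sigma_{\ast}$ you must either add a hypothesis of the type $e^{-\Lambda r}\|f\|_{\infty}\ge 1$, or keep both terms and settle for the constant $m+n+q-1$ (and the exponent $-\frac{1}{m+n+q-1}$ in $\mathscr{G}$), exactly as in Theorem \ref{t2}.
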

    The proof of Theorem $\ref{thm6.1}$ follows from Theorem \ref{t2}.
    \noindent \\
	Let us denote 
	\begin{align}
	W_{\ast}(t) = \sup_{0 \leq s \leq t} |W(s)|,\ \mbox{for each}\ t> 0. \nonumber 
	\end{align}
	From  \cite[p.96]{Karatzas}, we infer that for any $A>0$ and $t>0$
	\begin{align}
	\mathbb{P}(W_{\ast}(t) \geq A) \leq \frac{4 \sqrt{t}}{A \sqrt{2 \pi}}e^{-\frac{A^{2}}{2t}}. \nonumber 
	\end{align}
	 \noindent  Hence $$ \mathbb{P}(W_{\ast}(t)< \infty)=1-\lim_{A \rightarrow \infty} \mathbb{P}(W_{\ast}(t)> A)=1,\ t>0.$$
	 Let us denote $U_{t}=\{\omega \in \Omega : W_{\ast}(t)= \infty\},$ for every $t>0.$ Then, we have $\mathbb{P}(U_{t})=0,$ for every $t>0.$ If we take $t=1,2,\ldots,$ it is clear that $U_{t} \subset U_{m},$ for $t \geq m.$ Define $U=\displaystyle\lim_{m \rightarrow \infty} U_{m},$ we have $P(U)=\displaystyle\lim_{m\to \infty}\mathbb{P}(U_{m})=0.$ Therefore, for all $\omega \in U, $ $W_{\ast}(t;\omega)<\infty,$ for all $t>0.$ From the above observation, without loss of generality, we can assume that for all $\omega \in \Omega,$
	 \begin{align} 
	 W_{\ast}(t;\omega)<\infty,\ \mbox{for all}\ t>0. \nonumber 
	 \end{align}
	Let $N>0$ be a constant and define the stopping time as follows:
	$$\tau_{N}(\omega)=\inf \left\lbrace t>0:\ |W(t;\omega)|\geq N\right\rbrace.$$
	Clearly, $$\left\lbrace \omega \in \Omega :\ \tau_{N}(\omega) \leq t \right\rbrace=\left\lbrace \omega \in \Omega:\ W_{\ast}(t) \geq N \right\rbrace.$$
	Let  $t\in(0,\infty)$ be some random number and $b>1$ be chosen so that 
	\begin{equation}\label{Bs1}
	\left.
	\begin{aligned}
	b^{q-p}e^{-\eta (m+n-1)W_{\ast}(t)} \geq \frac{ kC_{1}^{p}+(\lambda_{1}+\Lambda) C_{1}}{|D|^{1-q}},\ \ &
	b^{q-p} e^{-\eta(q-p)W_{\ast}(t)} \geq \frac{k C_{1}^{p}}{|D|^{1-q}}\\ \mbox{ and }\   
	b^{q-p} e^{-\eta(q-1)W_{\ast}(t)} &\geq \frac{2k\left(\int_{D}\varphi^{\frac{q}{q-p}}(x)dx \right)^{\frac{q-p}{p}}}{\left(\int_{D} \varphi^{p+1}(x)dx \right)^{\frac{q-p}{p}}}     
	\end{aligned}
	\right\},
	\end{equation} 
	where $C_{1}=\displaystyle\sup_{x \in D} \varphi(x)$.	From \eqref{com1}, we have
	 \begin{align}
	 I_{1}(t,x)&\geq b \Big[ e^{-\eta\left(m+n-1\right) W_{\ast}(t)}b^{q-p}|D|^{1-q}- kC_{1}^{p}\Big] \nonumber
	 \end{align}
	  and the conditions \eqref{Bs1} leads to $I_1(t,x)\geq b(\lambda_{1}+\Lambda) \varphi(x),$ for each $x\in D$ and $t \geq 0.$ Note that 
	  \begin{align*}
	  \frac{\partial \bar{v}(t,x)}{\partial t}-\Delta\bar{v}(t,x)=\lambda_1 b\varphi(x)\leq I_1(t,x)-\Lambda \bar{v}(t,x).
	  \end{align*}
	  Hence from \eqref{ss1} and by comparison principle in Proposition \ref{p2}, we obtain $v(t,x) \geq \bar{v}(t,x)=b \varphi(x),$ for each $x\in D$ and $t \geq 0.$
	  
	 The following theorem provides  upper bounds for the finite-time blow-up. Further, we estimate a lower bound for the probability of blow-up  solution of the random PDE \eqref{b1} when $H=\frac{1}{2}$ and $\Lambda=\frac{ \eta^{2}}{2}-\gamma.$ 
	\begin{theorem}\label{thm6.3}
	Suppose that $n,p,q>1,\ m\geq 0$ with $q\geq p$ and for given initial value $f(x) \geq b \varphi(x),\ x \in D$ and choose $b>1$ such that \eqref{Bs1} holds. Then, we have the following results:
	\item [1.] If $m+n=q=\mu\ (say).$ Then $\tau \leq \sigma^{\ast}_{1},$ where $\sigma^{\ast}_{1}$ is given by
	\begin{align}
	\sigma^{\ast}_{1} =& \inf \Biggl\{ t \geq 0 : \int_{0}^{t}e^{ \rho_{1}W(s)-(\lambda_{1}+\Lambda)(\mu-1)s}ds\nonumber\\
	&\qquad\quad \geq J^{1-\mu}(0)\left[(\mu-1)\left(\frac{1}{2} \left(\displaystyle\int_{D}\varphi^{\frac{\mu}{\mu-p}}(x)dx \right)^{\frac{p-\mu}{p}}+\delta \left(\displaystyle\int_{D}\varphi ^{\frac{n}{n-1}}(x)dx\right)^{1-n}\right)\right]^{-1}  \Biggr\}. \nonumber 
	\end{align}
	 Moreover, there exists $\theta_{1}>0$ such that a lower bound for the probability of blow-up solution of the equation \eqref{ss1} is given by
	 \begin{align}
	 &\mathbb{P}\left( \tau<\infty \right) \geq \mathbb{P}\left( \sigma^{\ast}_{1}<\infty \right)\nonumber\\
	 &=1-\mathbb{P} \Bigg( 2(\mu-1)\left(\frac{1}{2} \left(\int_{D}\varphi^{\frac{\mu}{\mu-p}}(x)dx \right)^{\frac{p-\mu}{p}}+\delta\left(\int_{D}\varphi ^{\frac{n}{n-1}}(x)dx\right)^{1-n}\right)<X(\theta_{1},1)\rho^{2}_{1}J^{1-\mu}(0)\Bigg), \nonumber
	 \end{align}
	 where $\theta_{1}=\displaystyle\frac{2(\lambda_{1}+\Lambda)(\mu-1)}{\rho^{2}_{1}}$ and $\rho_{1}= \eta (\mu-1).$
	\item [2.] If $m+n>q>1$, let $A_{0}= \left( \frac{m+n-q}{m+n} \right) \left( \frac{m+n}{q}\right)^{\frac{q}{m+n-q}}$ and $\epsilon_{0} \leq \left(J^{q}(0)/A_{0} \right)^{\frac{q}{m+n-q}} $. Then $\tau \leq \sigma^{\ast \ast}_{2},$ where $\sigma^{\ast \ast}_{2}$ is given by
	\begin{align}
   \sigma^{\ast \ast}_{2} = \inf \Biggl\{ t \geq 0 : \int_{0}^{t}e^{-(\eta(q-1) W(s)-(\lambda_{1}+\Lambda)(q-1)s)}  \mathbf{1} _{\{W(s) \geq 0\}}ds \geq a_{1} \Biggr\}. \nonumber 
   \end{align}	
   Moreover, a lower bound for the probability of blow-up solution of the equation \eqref{ss1} is given by
   \begin{align}
	&\mathbb{P}\left( \tau<\infty \right) \geq \mathbb{P}\left( \sigma^{\ast \ast}_{2}<\infty \right)=\frac{8 (\lambda_{1}+\Lambda)(q-1)}{(\eta(q-1))^{2}} \sum_{n \geq 1} \frac{\exp \Bigg\{- \frac{(\eta(q-1))^{2} a_{1}}{8}j^{2}_{\frac{2 (\lambda_{1}+\Lambda)(q-1)}{(\eta(q-1))^{2}}-1, n} \Bigg\}}{j^{2}_{\frac{2 (\lambda_{1}+\Lambda)(q-1)}{(\eta(q-1))^{2}}-1, n}}, \nonumber
	\end{align}
	where \\
	$a_{1}=2{J^{1-q}(0)}\Bigg[(q-1)(-\lambda_{1}+\gamma) \left( \left(\int_{D}\varphi^{\frac{q}{q-p}}(x) \right)^{\frac{p-q}{p}}+\delta \left(\epsilon_{0} - \frac{A_{0} \epsilon_{0}^{\frac{m+n}{m+n-q}}}{J^{q}(0)} \right) \left(\int_{D}\varphi ^{\frac{n}{n-1}}(x)dx\right)^{1-n}\right) \Bigg]^{-1}$, \\ $\Big\{j_{\frac{2 (\lambda_{1}+\Lambda)(q-1)}{(\eta(q-1))^{2}}-1, n}\Big\}_{n \geq 1}$ is an increasing sequence of all positive zeros of the Bessel function of the first kind of order $\frac{2 (\lambda_{1}+\Lambda)(q-1)}{(\eta(q-1))^{2}}-1>-1$ (see more details in \cite{Eug2017}) and $J(0)=\displaystyle\int_{D}f(x) \varphi(x)dx.$
	\end{theorem}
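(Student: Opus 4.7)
The plan is to transport the arguments of Theorem \ref{thm5.1} to the case $H=\tfrac12$ (where $B^H(\cdot)=W(\cdot)$) and then exploit the classical distributional identities available for exponential functionals of Brownian motion with linear drift. Throughout, the Itô correction in the transformation $v=e^{-\eta W}u$ changes the effective linear coefficient from $\gamma$ to $\gamma-\tfrac{\eta^2}{2}=-\Lambda$, so every formula in Theorem \ref{thm5.1} carries over verbatim with $\gamma$ replaced by $-\Lambda$. By Proposition \ref{p2} and the lower barrier argument using \eqref{Bs1}, we again have $v(t,x)\ge b\varphi(x)$ on $D$, and the functional $J(t)=\int_D v(t,x)\varphi(x)\,dx$ satisfies the same differential inequality as in \eqref{511} and \eqref{b3}, with $(-\lambda_1+\gamma)$ replaced by $-(\lambda_1+\Lambda)$ and $B^H$ replaced by $W$. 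A Bernoulli-type ODE comparison then immediately yields $\tau\le\sigma_1^\ast$ in Case 1 and, after lower-bounding the min-integrand by its restriction to $\{W(s)\ge 0\}$ (where $e^{\eta(q-1)W(s)}\le e^{\eta(m+n-1)W(s)}$, so the min equals $e^{\eta(q-1)W(s)}$), $\tau\le\sigma_2^{\ast\ast}$ in Case 2.

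For the probability estimate in Case 1, I would write
\[
\mathbb{P}\bigl(\sigma_1^\ast=\infty\bigr)=\mathbb{P}\!\left(\int_0^\infty e^{\rho_1 W(s)-(\lambda_1+\Lambda)(\mu-1)s}\,ds<\mathcal{N}\right),
\]
where $\mathcal{N}=J^{1-\mu}(0)/[(\mu-1)\widetilde N]$ and $\widetilde N$ is the bracketed constant in \eqref{511}. Applying the Brownian scaling $\rho_1 W(s)\stackrel{d}{=}2\widetilde W(\rho_1^2 s/4)$ together with the substitution $t=\rho_1^2 s/4$, the exponential functional is distributed as $\tfrac{4}{\rho_1^2}\int_0^\infty e^{2(\widetilde W(t)-\theta_1 t)}\,dt$ with $\theta_1=2(\lambda_1+\Lambda)(\mu-1)/\rho_1^2$. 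Lemma \ref{ll1} (Dufresne) identifies this with $2/[\rho_1^2 X(\theta_1,1)]$. Rearranging the resulting inequality and taking complements yields the claimed lower bound on $\mathbb{P}(\tau<\infty)$.

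For Case 2 ($m+n>q$), after passing to $\sigma_2^{\ast\ast}$, the event $\{\sigma_2^{\ast\ast}=\infty\}$ is expressed through an exponential functional of Brownian motion with linear drift restricted to the positive excursion set $\{W(s)\ge 0\}$. I would invoke \cite[Theorem 2.6]{Eug2017}, which provides the explicit absolutely convergent series representation of the density of such a restricted exponential functional in terms of positive zeros $\{j_{\nu,n}\}_{n\ge 1}$ of the Bessel function $J_\nu$ of the first kind, with $\nu=2(\lambda_1+\Lambda)(q-1)/(\eta(q-1))^2-1$. Integrating this density over $[a_1,\infty)$ and simplifying produces the stated Bessel-zero series. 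A small verification is required at this stage, namely that $\nu>-1$, so that the cited theorem is applicable.

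The main obstacle will be Case 2: translating the original two-sided $\min$-integrand into a form that matches precisely the hypotheses of \cite[Theorem 2.6]{Eug2017}, and identifying the correct Bessel index $\nu$ and normalizing constants so that the density-to-tail-probability integration reproduces the series as displayed in the theorem statement. The earlier steps (the upper bounds on $\tau$ and the Dufresne reduction in Case 1) are essentially mechanical transcriptions of Theorem \ref{thm5.1} together with Lemma \ref{ll1}, and should not present any serious difficulty.
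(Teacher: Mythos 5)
Your proposal follows the paper's proof essentially verbatim: Case 1 is handled by transcribing Case 1 of Theorem \ref{thm5.1} with $\gamma$ replaced by $-\Lambda$, then applying Brownian scaling and the Dufresne-type identity of Lemma \ref{ll1}, and Case 2 by reducing the two-sided min-integrand to a perpetual integral functional on the positive excursion set and invoking \cite[Theorem 2.6]{Eug2017}. The one step you flag as the ``main obstacle'' is resolved in the paper by an elementary observation: writing $x(s)=\eta(q-1)W(s)-(\lambda_1+\Lambda)(q-1)s$, one has $e^{x(s)}\mathbf{1}_{\{W(s)\ge 0\}}\ge e^{-x(s)}\mathbf{1}_{\{x(s)\ge 0\}}$ because $\{x(s)\ge 0\}\subseteq\{W(s)\ge 0\}$ and $e^{x}\ge e^{-x}$ for $x\ge 0$, which puts the functional exactly into the form covered by the cited theorem with Bessel index $\nu=\frac{2(\lambda_1+\Lambda)(q-1)}{(\eta(q-1))^2}-1$.
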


	\begin{proof}
	 \textbf{Case 1:} If $m+n=q=\mu\ \mbox{(say)}$ by proceeding in the same way as in the case 1 of Theorem \ref{thm5.1} and by using \eqref{ss1}, we obtain
	 \begin{align}
	 \sigma^{\ast}_{1} =& \inf \Biggl\{ t \geq 0 : \int_{0}^{t}e^{ \rho_{1}W(s)-(\lambda_{1}+\Lambda)(\mu-1)s}ds\nonumber\\
	 &\qquad\quad \geq J^{1-\mu}(0)\left[(\mu-1)\left(\frac{1}{2} \left(\displaystyle\int_{D}\varphi^{\frac{\mu}{\mu-p}}(x)dx \right)^{\frac{p-\mu}{p}}+\delta \left(\displaystyle\int_{D}\varphi ^{\frac{n}{n-1}}(x)dx\right)^{1-n}\right)\right]^{-1}  \Biggr\}, \nonumber 
	 \end{align}
	where $\rho_{1}=\eta (\mu-1).$ From the definition of $\sigma^{\ast}_{1}$, we infer
	\begin{align}
	&\mathbb{P}\left( \sigma^{\ast}_{1}=\infty \right)\nonumber\\
	&= \mathbb{P} \Bigg( \int_{0}^{\infty}e^{\rho_{1}W(s)-(\lambda_{1}+\Lambda)(\mu-1)s}ds <\frac{J^{1-\mu}(0)}{(\mu-1)\left(\frac{1}{2} \left(\displaystyle\int_{D}\varphi^{\frac{\mu}{\mu-p}}(x)dx \right)^{\frac{p-\mu}{p}}+\delta \left(\displaystyle\int_{D}\varphi ^{\frac{n}{n-1}}(x)dx\right)^{1-n}\right)} \Bigg) \nonumber\\
	&=\mathbb{P} \Bigg( \int_{0}^{\infty}e^{2W\left( \frac{\rho^{2}_{1}}{4}s\right)-(\lambda_{1}+\Lambda)(\mu-1)s}ds < \frac{J^{1-\mu}(0)}{(\mu-1)\left(\frac{1}{2} \left(\displaystyle\int_{D}\varphi^{\frac{\mu}{\mu-p}}(x)dx \right)^{\frac{p-\mu}{p}}+\delta \left(\displaystyle\int_{D}\varphi ^{\frac{n}{n-1}}(x)dx\right)^{1-n}\right)}\Bigg). \nonumber
	\end{align}
	 By performing the transformation $s \mapsto \frac{4t}{\rho^{2}_{1}},$ we get
	 \begin{align}
	 &\mathbb{P}\left( \sigma^{\ast}_{1}=\infty \right)\nonumber\\
	 &=\mathbb{P} \Bigg(\frac{4}{\rho^{2}_{1}} \int_{0}^{\infty}e^{2W(t)-(\lambda_{1}+\Lambda)(\mu-1)\frac{4t}{\rho^{2}_{1}}}dt < \frac{J^{1-\mu}(0)}{(\mu-1)\left(\frac{1}{2} \left(\displaystyle\int_{D}\varphi^{\frac{\mu}{\mu-p}}(x)dx \right)^{\frac{p-\mu}{p}}+\delta \left(\displaystyle\int_{D}\varphi ^{\frac{n}{n-1}}(x)dx\right)^{1-n}\right)}\Bigg) \nonumber\\
	 &=\mathbb{P} \Bigg(\frac{4}{\rho^{2}_{1}} \int_{0}^{\infty}e^{2(W(t)-\theta_{1}t)}dt < \frac{J^{1-\mu}(0)}{(\mu-1)\left(\frac{1}{2} \left(\displaystyle\int_{D}\varphi^{\frac{\mu}{\mu-p}}(x)dx \right)^{\frac{p-\mu}{p}}+\delta \left(\displaystyle\int_{D}\varphi ^{\frac{n}{n-1}}(x)dx\right)^{1-n}\right)}\Bigg), \nonumber
	 \end{align}
	 where $\theta_{1}=\displaystyle\frac{2(\lambda_{1}+\Lambda)(\mu-1)}{\rho^{2}_{1}}.$ By Lemma \ref{ll1}, we have 
	 \begin{align}
	 \mathbb{P}\left( \sigma^{\ast}_{1}=\infty \right)&=\mathbb{P} \Bigg( \frac{1}{X(\theta_{1},1)} < \frac{\rho^{2}_{1}J^{1-\mu}(0)}{2(\mu-1)\left(\frac{1}{2} \left(\int_{D}\varphi^{\frac{\mu}{\mu-p}}(x)dx \right)^{\frac{p-\mu}{p}}+\delta\left(\int_{D}\varphi ^{\frac{n}{n-1}}(x)dx\right)^{1-n}\right)}\Bigg) \nonumber\\
	 &=\mathbb{P} \Bigg( 2(\mu-1)\left(\frac{1}{2} \left(\int_{D}\varphi^{\frac{\mu}{\mu-p}}(x)dx \right)^{\frac{p-\mu}{p}}+\delta\left(\int_{D}\varphi ^{\frac{n}{n-1}}(x)dx\right)^{1-n}\right)<X(\theta_{1},1)\rho^{2}_{1}J^{1-\mu}(0)\Bigg). \nonumber
	 \end{align}
	 \textbf{Case 2:} If $m+n>q>1,$ by proceeding in the same way as in the case 2 of Theorem \ref{thm5.1} and by using \eqref{ss1}, we obtain
	 \begin{align}
	 &\sigma^{\ast}_{2} = \inf \Bigg\{ t \geq 0 : \int_{0}^{t}\left( e^{\eta (q-1)W(s)} \wedge e^{\eta (m+n-1)W(s)} \right) e^{-(\lambda_{1}+\Lambda)(q-1)s}ds\geq a_{1}  \Bigg\}. \nonumber 
	 \end{align}
		Note that $m+n-1 > q-1>0$ and so
		\begin{align}
	&	\int_{0}^{t} \left( e^{\eta (q-1)W(s)} \wedge e^{\eta (m+n-1)W(s)} \right) e^{-(\lambda_{1}+\Lambda)(q-1)s}ds \nonumber\\&= \int_{0}^{t} e^{\eta (m+n-1) W(s)-(\lambda+\Lambda)(q-1) s}  \mathbf{1} _{\{W(s)<0\}} ds+\int_{0}^{t} e^{\eta(q-1) W(s)-(\lambda_{1}+\Lambda)(q-1)s}  \mathbf{1} _{\{W(s) \geq 0\}} ds \nonumber\\
		&\geq \int_{0}^{t} e^{\eta(q-1) W(s)-(\lambda_{1}+\Lambda)(q-1)s}  \mathbf{1} _{\{W(s) \geq 0\}} ds, \nonumber
		\end{align}
		and $$\left\lbrace  \eta(q-1) W(t)-(\lambda_{1}+\Lambda)(q-1)t \geq 0 \right\rbrace \subseteq  \left\lbrace W(t) \geq 0 \right\rbrace,$$ for all $t \geq 0$. We conclude that 
		\begin{align}
	&	\int_{0}^{t} \left( e^{\eta (q-1)W(s)} \wedge e^{\eta (m+n-1)W(s)} \right) e^{-(\lambda_{1}+\Lambda)(q-1)s}ds \nonumber\\
		&\geq \int_{0}^{t} e^{-(\eta(q-1) W(s)-(\lambda_{1}+\Lambda)(q-1)s)}  \mathbf{1} _{\{\eta(q-1) W(s)-(\lambda_{1}+\Lambda)(q-1)s \geq 0\}} ds. \nonumber 
		\end{align}
		Therefore, we define
     \begin{align}
     &\sigma^{\ast \ast}_{2} = \inf \Biggl\{ t \geq 0 : \int_{0}^{t}e^{-(\eta(q-1) W(s)-(\lambda_{1}+\Lambda)(q-1)s)}  \mathbf{1} _{\{\eta(q-1) W(s)-(\lambda_{1}+\Lambda)(q-1)s \geq 0\}}ds\geq  a_{1} \Biggr\}. \nonumber 
     \end{align}	
	 It can be easily seen that $\tau \leq \sigma^{\ast}_{2} \leq \sigma^{\ast \ast}_{2}.$ From the definition of $\sigma^{\ast \ast}_{2}$  and  \cite[Theorem 2.6]{Eug2017}, we have
	 \begin{align}
	 \mathbb{P}\{ \tau<\infty\} &\geq \mathbb{P}\{ \sigma^{\ast\ast}_{2}<\infty\} 
\nonumber\\	 &= \mathbb{P} \Bigg( \int_{0}^{\infty}e^{-(\eta(q-1) W(s)-(\lambda_{1}+\Lambda)(q-1)s)}  \mathbf{1} _{\{\eta(q-1) W(s)-(\lambda_{1}+\Lambda)(q-1)s \geq 0\}}ds \geq a_{1} \Bigg) \nonumber\\
	 &= \int^{\infty}_{a_{1}} (\lambda_{1}+\Lambda)(q-1) \sum_{n \geq 1} \exp \Bigg\{ -\left( \frac{(\eta(q-1))^{2}}{8}j^{2}_{\frac{2 (\lambda_{1}+\Lambda)(q-1)}{(\eta(q-1))^{2}}-1, n}\right)y \Bigg\}dy \nonumber\\
	 &= \frac{8 (\lambda_{1}+\Lambda)(q-1)}{(\eta(q-1))^{2}} \sum_{n \geq 1} \frac{\exp \Bigg\{- \frac{(\eta(q-1))^{2} a_{1}}{8}j^{2}_{\frac{2 (\lambda_{1}+\Lambda)(q-1)}{(\eta(q-1))^{2}}-1, n} \Bigg\}}{j^{2}_{\frac{2 (\lambda_{1}+\Lambda)(q-1)}{(\eta(q-1))^{2}}-1, n}},\nonumber 
	 \end{align}
  where we have used the Monotone Convergence Theorem to obtain the final equality, which completes the proof of the theorem.
	 \end{proof}
	 The following theorem provides an upper bound for the probability of blow-up solution $v(\cdot, \cdot)$ of the equation \eqref{ss1}. 
	 

     \begin{theorem}
    Assume that $p,q,n>1,$ $m \geq 0$ with $m+n \geq q$. If $f$ is a non-negative bounded function and $|D|>k$, then an
    upper bound for the probability of blow-up solution of the equation \eqref{ss1} is given by
    	\begin{align}
    	\mathbb{P}\{ \tau<\infty\}\leq \int_{\widetilde{N}_1}^{\infty} h_{3}(y)dy, \nonumber 
    	\end{align}
    	where 
    	\begin{align}
    	\widetilde{N}_{1}&=\frac{1}{2M(m+n-1) \|f\|_{\infty}^{m+n-1}} -\frac{1}{(\lambda+\Lambda)(q-1)},\ \mbox{here}\ M = \max\left\lbrace |D|,\delta |D| \right\rbrace\ \mbox{and}\ \nonumber\\  h_{3}(y)&=\frac{(2/(\eta(q-1))^{2}y)^{(2\Lambda(m+n-1) /(\eta(q-1))^{2})}}{y \Gamma(2\Lambda(m+n-1)/(\eta(q-1))^{2})} \exp\left(-\frac{2}{(\eta(q-1))^{2}y}\right).\ \nonumber
    	\end{align}
     \end{theorem}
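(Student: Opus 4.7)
The plan is to leverage the lower-bound stopping time $\sigma_*$ supplied by Theorem \ref{thm6.1}, which satisfies $\sigma_*\le\tau$, and then identify the law of a resulting Brownian exponential functional via Dufresne's identity (Lemma \ref{ll1}). The inclusion $\{\tau<\infty\}\subseteq\{\sigma_*<\infty\}$ gives $\mathbb{P}(\tau<\infty)\le\mathbb{P}(\sigma_*<\infty)$. Because the integrand in the definition of $\sigma_*$ is non-negative, the partial integrals are monotone in $t$, so the event $\{\sigma_*<\infty\}$ coincides exactly with
\[
\left\{\int_0^{\infty}\bigl(e^{\eta(q-1)W(r)}\vee e^{\eta(m+n-1)W(r)}\bigr)e^{-\Lambda(m+n-1)r}\,dr\ge N_0\right\},
\]
where $N_0=\tfrac{1}{2M(m+n-1)\|f\|_\infty^{m+n-1}}$. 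Hence the problem reduces to estimating the right tail of this integral.

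I would then peel off a deterministic contribution by splitting the integration domain according to the sign of $W(r)$. On $\{W(r)<0\}$ the maximum collapses to $e^{\eta(q-1)W(r)}\le 1$, and so this portion is bounded deterministically by an integral of the form $\int_0^\infty e^{-\Lambda(m+n-1)r}\,dr$, which is finite. Absorbing this deterministic slack into $N_0$ produces the reduced threshold $\widetilde{N}_1$ appearing in the statement, and the event is then contained in $\{\mathcal I\ge\widetilde{N}_1\}$, where $\mathcal I$ is the remaining Brownian exponential functional (with the indicator $\mathbf{1}_{\{W(r)\ge 0\}}$ subsequently dropped to enlarge the event).

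The law of $\mathcal I$ is identified by Brownian self-similarity. Rescaling $W$ so that its coefficient becomes $2$, together with the matching time change, reduces $\mathcal I$ to a positive constant multiple of $\int_0^\infty e^{2(W'(u)-\theta u)}\,du$ for an explicit drift $\theta>0$. Lemma \ref{ll1} then asserts that this functional is distributed as a constant multiple of $1/X(\theta,1)$. A standard Jacobian change of variables converts the Gamma$(\theta,1)$ density into the inverse-Gamma density displayed as $h_3$, and integrating $h_3$ over $(\widetilde{N}_1,\infty)$ yields the stated upper bound. The principal obstacle is the bookkeeping in the final step: one must track the scaling constant and the shape parameter $\theta$ through the transformation so that the explicit form of the density $h_3$ given in the statement is reproduced. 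A secondary point is to verify that $\widetilde{N}_1>0$, which amounts to a quantitative smallness condition on $\|f\|_\infty$ relative to $\Lambda$, $M$, and $m+n-1$; without it the bound is vacuous.
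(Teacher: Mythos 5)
Your proposal is essentially the paper's own proof: reduce to $\sigma_{*}\le\tau$ from Theorem \ref{thm6.1}, split the integral by the sign of $W$, absorb the deterministic contribution into the threshold to get $\widetilde{N}_{1}$, drop the indicator, apply Brownian scaling, and invoke Lemma \ref{ll1} to identify the inverse-Gamma law whose density is $h_{3}$. One concrete caveat on the "bookkeeping" you flag as the principal obstacle: since $\sigma_{*}$ is defined with the maximum $e^{\eta(q-1)W}\vee e^{\eta(m+n-1)W}$, the term surviving on $\{W(r)\ge 0\}$ after your split is $e^{\eta(m+n-1)W(r)}$, so the scaling produces a Gamma shape $2\Lambda(m+n-1)/(\eta(m+n-1))^{2}$ and scale $2/(\eta(m+n-1))^{2}$ rather than the $(\eta(q-1))^{2}$ appearing in the stated $h_{3}$; the paper only reaches the stated form by silently replacing $\vee$ with $\wedge$ in its proof, so you should not expect your (correct) execution to reproduce $h_{3}$ literally. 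Your observation that $\widetilde{N}_{1}>0$ must be verified for the bound to be non-vacuous is a point the paper omits.
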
	
     \begin{proof}
     By using the same procedure as in Theorem \ref{thm6.1}, we have $\sigma_{\ast} \leq \tau,$ where $\sigma_{\ast}$ is given by 
     \begin{align}
     \sigma_{\ast} = \inf \Bigg\{ t\geq 0 :\int_{0}^{t}\left( e^{\eta \left( q-1\right)  W(r)} \vee e^{\eta (m+n-1) W(r)}\right)  e^{-\Lambda (m+n-1) r}dr \geq  \frac{1}{2M(m+n-1) \|f\|_{\infty}^{m+n-1}} \Bigg\}, \nonumber
     \end{align}
     Note that $1<q\leq m+n$ and so
     \begin{align}
 &   \int_{0}^{t} \left( e^{\eta (q-1)W(s)} \wedge e^{\eta (m+n-1)W(s)} \right) e^{-\Lambda(m+n-1)s}ds\nonumber\\ &= \int_{0}^{t} e^{\eta (m+n-1) W(s)-\Lambda(m+n-1) s}  \mathbf{1} _{\{W(s)<0\}} ds+\int_{0}^{t} e^{\eta(q-1) W(s)-\Lambda(m+n-1)s}  \mathbf{1} _{\{W(s) \geq 0\}} ds \nonumber\\
     &\leq \int_{0}^{\infty} e^{-\Lambda(m+n-1) s} ds +\int_{0}^{t} e^{\eta(q-1) W(s)-\Lambda(m+n-1)s} ds \nonumber\\
     &=\frac{1}{\Lambda(m+n-1)}+\int_{0}^{t} e^{\eta(q-1) W(s)-\Lambda(m+n-1)s} ds,
     \end{align}
     for all $t \geq 0$. Therefore, we define 
     \begin{align}
     \sigma_{\ast \ast}=\inf \Bigg\{ t\geq 0 : \int_{0}^{t} e^{\eta(q-1) W(s)-\Lambda(m+n-1)s} ds &\geq \frac{1}{2M(m+n-1) \|f\|_{\infty}^{m+n-1}} -\frac{1}{\Lambda(m+n-1)}\Bigg\}, \nonumber
     \end{align}
      and $\sigma_{\ast \ast} \leq \sigma_{\ast}\leq \tau$. Using the definition of $\sigma_{\ast \ast}$, we have 
      \begin{align}
      \mathbb{P}\{ \tau<\infty\} \leq \mathbb{P}( \sigma_{\ast \ast}<\infty) &= \mathbb{P}\Bigg(\int_{0}^{\infty} e^{\eta(q-1) W(s)-\Lambda(m+n-1)s}  ds \geq \widetilde{N}_{1}\Bigg)\nonumber\\
      &= \mathbb{P}\Bigg( \int_{0}^{\infty} \exp\{2 W_{s}^{(\hat{\alpha})}\}ds \geq \widetilde{N}_{1} \Bigg), \nonumber      
      \end{align}
      where $W_{s}^{(\hat{\alpha})}:=W(s)-\hat{\alpha}s$ and $\hat{\alpha} =\frac{\Lambda(m+n-1)}{\eta(q-1)}.$ By performing the transformation $s\mapsto \frac{4t}{(\eta(q-1))^{2}}$ and setting $\nu_{1} = \frac{2\hat{\alpha}}{\eta(q-1)}$,  we get
      \begin{align}
      \mathbb{P}\{ \tau<\infty\}
      \leq \mathbb{P}\Bigg(\frac{4}{(\eta(q-1))^{2}} \int_{0}^{\infty} \exp\{2 W_{t}^{(\nu_{1})}\}dt \geq \widetilde{N}_1 \Bigg).\nonumber
      \end{align}
      	Therefore, we deduce 
      	\begin{align}
      	\mathbb{P}\{ \tau<\infty\}\leq \int_{\widetilde{N}_1}^{\infty} h_{3}(y)dy, \nonumber 
      	\end{align}
      which completes the proof.      
     \end{proof}

	 \section{Appendix}\label{app}
	 In this appendix,  we provide  a  comparison result  which is used in Section \ref{sec4} (see  \cite[Theorem 1.3]{teschl} also).  From \eqref{511}, we have 
	  \begin{align}
	  J'(t)& \geq  \left(-\lambda_{1}+\gamma\right)J(t)+\mathit{\widetilde{N}}e^{\eta\left(\mu-1\right)  B^{H}(t)}J^{\mu}(t), \nonumber
	  \end{align}
	  and
	  \begin{align}
	  \frac{d I(t)}{dt} = \left(-\lambda_{1}+\gamma\right)I(t)+\mathit{\widetilde{N}}e^{\eta\left(\mu-1\right)  B^{H}(t)}I^{\mu}(t),\ I(0)=J(0). \nonumber
	  \end{align}
	 We have to show that $J(t) \geq I(t),$ for $ t \in [0, \tau).$ Suppose that $J(t) <I(t),$ for some $t \in [t_{0}, t_{0}+\varepsilon),\ \varepsilon>0.$  Let $\triangle(t)=I(t)-J(t).$ Then an application of Taylor's formula yields 
	 \begin{align}
	 \triangle'(t)={I}'(t)-{J}'(t) &\leq (-\lambda_{1}+\gamma) \triangle(t)+\mathit{\widetilde{N}}e^{\left(\mu-1\right) \eta B^{H}(t)}\left(I^{\mu}(t)-J^{\mu}(t) \right) \nonumber \\
	 & \leq (-\lambda_{1}+\gamma) \triangle(t)+\mu \kappa \mathit{\widetilde{N}} \triangle(t) \leq \kappa_{1} \triangle(t), \nonumber
	 \end{align} 
	 where $\kappa=\sup \limits_{t \in [0, \tau]}e^{\left(\mu-1\right) \eta B^{H}(t)}\sup \limits_{t \in [0, \tau]}(|J(t)|+|I(t)|)^{\mu-1}$ and $\kappa_{1}= \max \{ (-\lambda+\gamma), \mu \kappa \mathit{\widetilde{N}} \}.$ Therefore, we obtain $\dot{\triangle}(t)-\kappa_{1} \triangle(t) \leq 0$ and this implies that $e^{-\kappa_{1}t}\triangle (t) \leq 0,$ so that $\triangle(t) \leq 0$ which is a contradiction, since $\triangle(t)>0,$ for all $t \in [t_{0}, t_{0}+\varepsilon)$. Hence $J(t) \geq I(t),$ for all $t \in [0, \tau).$
	
  \vskip 0.2 cm
    \medskip\noindent
	{\bf Acknowledgements:} The first author is supported by the University Research Fellowship of Periyar University, India. M. T. Mohan would  like to thank the Department of Science and Technology (DST), India for Innovation in Science Pursuit for Inspired Research (INSPIRE) Faculty Award (IFA17-MA110). The third author is supported by the Fund for Improvement of Science and Technology Infrastructure (FIST) of DST (SR/FST/MSI-115/2016).

\end{document}